\documentclass[reqno, a4paper, 12pt]{amsart}
\usepackage[english]{babel}
\usepackage{mathptmx}
\usepackage{hyperref}
\usepackage{graphicx}
\hypersetup{unicode=true, pdfauthor=Phung Van Manh, pdfstartview=FitH}
\usepackage{fullpage}
\newtheorem{theorem}{Theorem}[section]
\newtheorem{corollary}[theorem]{Corollary}
\newtheorem{lemma}[theorem]{Lemma}

\theoremstyle{definition}
\newtheorem{definition}[theorem]{Definition}

\theoremstyle{remark}
\newtheorem{remark}[theorem]{Remark}

\numberwithin{equation}{section}


\newcommand{\ZZ}{\mathbb{Z}} 
\newcommand{\NN}{\mathbb{N}} 
\newcommand{\RR}{\mathbb{R}} 


\newcommand{\LL}{\mathcal{L}}

\begin{document}
\title{On the convergence of Kergin and Hakopian interpolants at Leja sequences for the disk }
\author{Phung Van Manh}
\address{Institut de Math\'{e}matiques, Universit\'{e} de Toulouse III, 31062, Toulouse Cedex 9, France and Department of Mathematics, Hanoi University of Education,
136 Xuan Thuy street, Cau Giay, Hanoi, Vietnam}
\email{manhlth@gmail.com}

\subjclass[2000]{Primary 41A05, 41A63, 46A32}

\keywords{Kergin interpolation, Hakopian interpolation, Leja sequence}



\begin{abstract} We prove that Kergin interpolation polynomials and Hakopian interpolation polynomials at the points of a Leja sequence for the unit disk $D$ of a sufficiently smooth function $f$ in a neighbourhood of $D$ converge uniformly to $f$ on $D$. Moreover,  when $f\in C^\infty(D)$, all the derivatives of the interpolation polynomials converge uniformly to the corresponding derivatives of $f$.
\end{abstract}

\maketitle
   
\section{Introduction} 
Kergin and Hakopian interpolants were introduced independently about thirty years ago as natural multivariate generalizations of univariate Lagrange interpolation. The construction of these interpolation polynomials requires the use of points, usually called nodes, with which one obtains a number of natural mean value linear forms which provide the interpolation conditions. Kergin interpolation polynomials also interpolate  in the usual sense, that is, the interpolation polynomial and the interpolated function coincide on the set of nodes but this condition no longer characterizes it. The general definition is recalled below. Approximation properties of Kergin and Hakopian interpolation polynomials have been deeply investigated, see e.g., \cite{anderssonpassare, bloom, bloomcalvi, bloomcalvi2, filipsson}. Elegant results were in particular obtained in the two-dimensional case when the nodes forms a complete set of roots of unity (viewed as a subset of $\RR^2$). Thus, in \cite{liang}, Liang established a formula for Hakopian interpolation at the roots of unity in $\RR^2$, and later together with L\"{u} \cite{liang.lu} estimated the remainder and proved that Hakopian interpolation polynomials at the roots of unity of a function of class $C^2$ in a neighbourhood of the closed unit disk $D\subset \RR^2$ converge uniformly to the function on $D$. Thanks to Liang's formula, further authors investigated (weighted) mean convergence of Hakopian interpolation (see \cite{liang.lu, liang.feng.sun}). On other hand, in 1997, using a beautiful formula for Kergin interpolation at nodes in general position in $\RR^2$, Bos and Calvi \cite{boscalvi1} independently established a similar convergence result for Kergin interpolation. If $C_n$ denotes the set of $n$-th roots of unity, $\mathcal{H}[C_n; \cdot]$ (resp. $\mathcal{K}[C_n; \cdot]$) 
the Hakopian (resp. Kergin) projector, the results can be stated as
$$\mathcal{K}[C_n; f] \rightarrow f\; \textrm{and}\; \mathcal{H}[C_n; f] \rightarrow f, \quad\textrm{uniformly on $D$,\; for every $f\in C^2(D)$}.$$
In the above results, going from $n$ to $n+1$, we need to change all the nodes and it seems natural to look for similar results in which $C_n$ would be replaced by a set $E_n$ such that $E_{n}\subset E_{n+1}$, which comes to find \emph{sequences} of nodes rather than sequences of arrays of nodes. It is the purpose of this note to exhibit such sequences. They enable us to obtain series expansions of the form
$$f(x)=\sum_{d=0}^\infty \mu\left(e_0,\dots,e_d, \textup{D}^df(\cdot)(x-e_0,\dots,x-e_{d-1})\right)$$
for $f\in C^\infty(D)$, where the $\mu(e_0,\dots,e_d,\cdot)$ are 
certain mean value linear forms (whose definition will be specified below) and $\textup{D}^df(a)$ denotes the $d$-th total derivative of $f$. The sequences that we shall use are Leja sequences for $D$ and the results of the present paper have been made possible by recent  progresses on the study of Leja sequences (and associated constants) contained in \cite{biacal, jpcpvm, calvimanh2}. To treat the case of Hakopian interpolation, we shall prove a formula for Hakopian interpolation at nodes in general position in $\RR^2$ which reduces to Liang's formula when the nodes form a complete set of roots of unity and which is of independent interest. The proof of our convergence results requires a somewhat higher level of smoothness than in the case of interpolation at the roots of unity. The question whether we can weaken the smoothness of the interpolated function is still unanswered. Moreover, when the interpolated function is in the class $C^\infty$, we show that all the derivatives of the interpolation polynomials converge uniformly to the corresponding derivatives of the interpolated function. 
 
\emph{Notations}. The scalar product of $x=(x^1,\ldots,x^N)$ and $y=(y^1,\ldots,y^N)$ in $\RR^N$ is defined by  
$\langle x, y\rangle:=\sum_{j=1}^Nx^jy^j,$
and the corresponding norm of $x$ is $\|x\|=\sqrt{\langle x,x\rangle}$. Let $K$ be a compact set in $\RR^N$. For each continuous function $f$ on $K$ we set $\|f\|_{K}=\sup\{|f(x)|: x\in K\}$. The space of $k$-times continuously differentiable functions on a neighbourhood of $K$ is denoted by  $C^k(K)$. For $f\in C^k(K)$, $k\geq 1$, we set 
$$\textup{D}^{\alpha}f=\frac{\partial ^{|\alpha|}f}{(\partial x^1)^{\alpha_1}\cdots (\partial x^N)^{\alpha_N} },\quad \alpha=(\alpha_1,\ldots,\alpha_N),\,\,\, |\alpha|=\alpha_1+\cdots+\alpha_N\leq k,$$
$$\textup{D}_yf(x)=\textup{D}f(x)(y)=\sum_{j=1}^N\frac{\partial f}{\partial x^j}(x)y^j,\quad x\in K,\,\,\, y=(y^1,\ldots,y^N).$$  
The euclidean norm of the linear form $\textup{D}f(x)$ is denoted by $\|\textup{D}f(x)\|$. We have
$$\|\textup{D}f(x)\|=\Big[\sum_{j=1}^N\big(\frac{\partial f}{\partial x^j}(x)\big)^2\Big ]^{\frac 12}.$$ 
We also denote by $\mathcal P_d(\RR^N)$ the space of polynomials of $N$ variables and degree at most $d$.  

\section{The definition of Kergin and Hakopian interpolants}
It is convenient to recall some definitions and properties of interpolation polynomials in their full generality. In particular, we shall introduce Kergin and Hakopian interpolation polynomials as particular cases of a more general procedure. 
 
Given a convex subset $\Omega\subset \RR^N$ and a tuple $A$ of $d+1$ not necessarily distinct points in $\Omega$,  $A=(a_0,a_1, \dots, a_d)\in \Omega^{d+1}$, the simplex functional $\int\limits_{[a_0,\dots,\, a_d]}$ is defined on the space of 
continuous functions $C(\Omega)$ by the relation
\begin{equation}\int\limits_{[a_0,\dots,\, a_d]}f:=\int\limits_{\Delta_d} f\Big(a_0+\sum_{j=1}^d t_j(a_j-a_0)\Big) \textup{d}t,\quad f\in C(\Omega),\,\,\, d\geq 1,\end{equation}
where $\textup{d}t=\textup{d}t_1\cdots \text {d}t_d$ stands for the ordinary Lebesgue measure on the standard simplex $\Delta_d=\{(t_1,t_2,\dots, t_d)\in [0,\,1]^d,\; \sum_{j=1}^d t_j \leq 1\}$. In the case $d=0$ we set $\int\limits_{[a_0]}f=f(a_0)$.
\par 
The following theorem leads us to the definition of mean-value interpolation. Its proof can be found in \cite{goodman} or \cite{filipsson}.  

\begin{theorem}
Let $\Omega$ be an open convex subset of $\RR^N$, $A=(a_0,\dots,a_d)$ be a tuple in $\Omega$ and let $k\in\{0,\dots,d\}$. For every function $f\in C^{d-k}(\Omega)$, the set of all $(d-k)$-times continuously differentiable functions on $\Omega$, there exists a unique polynomial $P$ on $\RR^N$ of degree at most $d-k$ such that   
\begin{equation}\label{eq:intercondMV}\int\limits_{[a_0,\dots,a_{j+k}]}\textup{D}^\alpha(f-P)=0,\quad |\alpha|=j,\quad j=0,\dots, d-k. \end{equation}
\begin{definition}
The polynomial $P$ in (\ref{eq:intercondMV}) is called the \emph{$k$-th mean-value interpolation polynomial} of $f$ at $A$ and is denoted by $\LL^{(k)}[A;f]$ or $\LL^{(k)}[a_0,\dots,a_d;f]$. 
\end{definition}
\end{theorem}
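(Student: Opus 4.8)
The plan is to recognize (\ref{eq:intercondMV}) as a square linear system in the coefficients of $P$ and to deduce both existence and uniqueness from a single injectivity statement. Set $m=d-k$. A count of conditions comes first: for fixed $j$ there are $\binom{N-1+j}{N-1}$ multi-indices $\alpha$ with $|\alpha|=j$, and the hockey-stick identity gives $\sum_{j=0}^{m}\binom{N-1+j}{N-1}=\binom{N+m}{N}=\dim\mathcal P_m(\RR^N)$. Thus the linear map
\[
T:\mathcal P_m(\RR^N)\longrightarrow\RR^{\binom{N+m}{N}},\qquad
T(P)=\left(\ \int\limits_{[a_0,\dots,a_{j+k}]}\textup{D}^\alpha P\ \right)_{|\alpha|=j,\ 0\le j\le m}
\]
acts between two spaces of the same finite dimension, so it suffices to show that $T$ is injective: then $T$ is an isomorphism, and since $f\in C^{m}(\Omega)$ makes each $\textup{D}^\alpha f$ with $|\alpha|\le m$ continuous and hence integrable over the simplices involved, the vector $\big(\int_{[a_0,\dots,a_{j+k}]}\textup{D}^\alpha f\big)_{j,\alpha}$ has a unique preimage under $T$, which is precisely the polynomial $P$ satisfying (\ref{eq:intercondMV}).

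To prove injectivity, take $P\in\mathcal P_m(\RR^N)$ in $\ker T$ and write $P=P_0+P_1+\dots+P_m$ for its decomposition into homogeneous parts. I would show $P_j=0$ by downward induction on $j$, starting at $j=m$. At stage $j$ we already know $P_{j+1}=\dots=P_m=0$, so $\deg P\le j$; hence, for every $\alpha$ with $|\alpha|=j$, the polynomial $\textup{D}^\alpha P$ is the \emph{constant} $\alpha!\,c_\alpha$, where $c_\alpha$ is the coefficient of $x^\alpha$ in $P_j$. The condition $\int_{[a_0,\dots,a_{j+k}]}\textup{D}^\alpha P=0$ then reads $\alpha!\,c_\alpha\cdot\mathrm{vol}(\Delta_{j+k})=0$, and as $\mathrm{vol}(\Delta_{j+k})=1/(j+k)!>0$ (when $j=k=0$ read $\mathrm{vol}(\Delta_0)=1$, consistent with $\int_{[a_0]}f=f(a_0)$), this forces $c_\alpha=0$. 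Letting $\alpha$ run over all $|\alpha|=j$ gives $P_j=0$, which completes the induction; hence $P=0$ and $T$ is injective.

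I do not anticipate a genuine obstacle. The one thing one might fear a priori — that injectivity should require the nodes $a_0,\dots,a_d$ to lie in some kind of general position — is not needed here, precisely because the conditions in (\ref{eq:intercondMV}) are triangular with respect to the grading of $\mathcal P_m(\RR^N)$ by degree: once the degree is cut to $j$, the derivatives of order $j$ are constants and the simplex functionals act on them merely by multiplication by the positive number $\mathrm{vol}(\Delta_{j+k})$. The real care is bookkeeping: $[a_0,\dots,a_{j+k}]$ carries $j+k+1$ nodes, so the ambient simplex is $\Delta_{j+k}$, and $j$ ranges only up to $d-k$. As a sanity check, when $N=1$ and $k=0$ the Hermite--Genocchi formula converts these conditions into $[a_0,\dots,a_j](f-P)=0$ for $j=0,\dots,d$, so $P$ is the classical Hermite interpolant, while for $k=d$ the lone condition simply defines $P$ as the constant $d!\int_{[a_0,\dots,a_d]}f$.
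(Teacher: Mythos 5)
Your proposal is correct, and every step checks out: the count $\sum_{j=0}^{m}\binom{N-1+j}{N-1}=\binom{N+m}{N}=\dim\mathcal P_m(\RR^N)$ is right, the downward induction on homogeneous components is valid (for $|\alpha|=j$ and $\deg P\le j$ one indeed has $\textup{D}^\alpha P=\alpha!\,c_\alpha$, and the simplex functional applied to a constant $c$ gives $c/(j+k)!>0$ \emph{regardless} of the nodes, which is exactly why no general-position hypothesis is needed), and bijectivity of the square map $T$ then gives existence and uniqueness simultaneously. Be aware, however, that the paper itself offers no proof of this theorem: it is quoted with a reference to Goodman and to Filipsson, so the comparison is with those sources rather than with an argument in the text. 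The proofs cited there are constructive: they produce the interpolant through an explicit (Newton/Hermite--Genocchi type) formula built from the simplex functionals, which is what the paper alludes to just after the statement (``an explicit but rather complicated formula''). Your linear-algebra route is more elementary and isolates the key structural point --- the conditions are triangular with respect to the degree grading of $\mathcal P_m(\RR^N)$ --- but it yields the interpolant only abstractly. The constructive route buys more: the explicit formula directly delivers the properties (1)--(5) listed after the theorem (continuity of $f\mapsto\LL^{(k)}[A;f]$, continuity in $A$, affine invariance, the Taylor degeneration) and is what later allows the operators to be extended to less smooth functions when the nodes are in general position. In your framework the continuity statements are still recoverable --- the matrix of $T$ in a monomial basis depends continuously on $A$ and has nonvanishing determinant, so $T^{-1}$ and the data vector vary continuously --- but that requires a short additional argument, whereas the explicit formula gives it for free.
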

There is an explicit but rather complicated formula for mean-value interpolation, see \cite[Theorem 1]{goodman} or \cite[Theorem 4.3]{filipsson}. Here we summarize a few basic properties of mean-value interpolation.
\begin{enumerate}
\item The polynomial $\LL^{(k)}[A;f]$ does not depend on the ordering of the points in $A$,
\item  The operator $ \LL^{(k)}[A] :\; f\in C^{d-k}(\Omega)\longmapsto \LL^{(k)}[A;f]\in \mathcal{P}_{d-k}(\RR^N)$
is a continuous linear projector (when $C^{d-k}(\Omega)$ is equipped with its standard topology),
\item For each $f\in C^{d-k}(\Omega)$, the map $A\in \Omega^{d+1}\longmapsto \LL^{(k)}[A;f]\in \mathcal{P}_{d-k}(\RR^N)$ is continuous,
\item For any affine mapping $\Psi: \RR^N\to\RR^M$ and any suitably defined function $f$, we have $ \LL^{(k)}[A;f\circ \Psi]= \LL^{(k)}[\Psi(A);f]\circ\Psi$ 
\item The polynomial $\LL^{(0)}[A;f]$ interpolates $f$ at the $a_j$'s and becomes Taylor polynomial of $f$ at $a$ of order $d$ when $a_0=\cdots=a_d=a$. 
 \end{enumerate}

The most interesting mean-value interpolation polynomials are Kergin interpolants which correspond to the case $k=0$,
\begin{equation} \mathcal{K}[a_0,\dots , a_d;f]=\LL^{(0)}[a_0,\dots,a_d;f], \end{equation} and Hakopian interpolants which correspond to the case $k=N-1$ and $d\geq N-1$, 
\begin{equation} \mathcal{H}[a_0,\dots , a_{d};f]=\LL^{(N-1)}[a_0,\dots,a_{d};f]. \end{equation}   
   When the $a_j$'s are in general position in $\RR^N$- that is, every subset of $N+1$ points of $A$ defines an affine basis of $\RR^N$- then Kergin operator extends to functions of class $C^{N-1}$, see \cite[p. 206-207]{boscalvi1}. On the other hand, under the same condition on the points, Hakopian interpolation is characterized by the following relation. For $P\in\mathcal P_{d-N+1}(\RR^N)$,  
\begin{equation}\label{for:reduce-hakop} P=\mathcal{H}[a_0,\dots , a_{d};f] 
 \iff  \int\limits_{[a_{i_1}, \dots, a_{i_N}]}(f-P)=0,\quad 0\leq i_1 < \dots < i_{N} \leq d. \end{equation}  
Hence, in that case, derivatives are no longer involved and the Hakopian operator extends to continuous functions. 
\section{Error formulas for Hakopian and Kergin interpolants in $\RR^2$} 
We now restrict ourselves to the two-dimensional case.  
\subsection{} For $x=(x^1,x^2)\in \RR^2$, we denote by $x^{\bot}:=(-x^2,x^1)$, the image of $x$ under the rotation of center the origin and angle $\pi/2$. As usual, to $x=(x^1,x^2)$, we associate the complex number $x^1+ix^2$ with $i=\sqrt {-1}$ which we still denote by $x$. With this notation we have $x^{\bot}=ix$. Assume that the points $a_i$ are in general position (so that no three of them are aligned). We consider an one-variable polynomial of degree $d-1$ defined by  
\begin{equation}\label{eqn:h.st}
h_{st}(w)=\prod_{m=0,m\ne s}^{d-1}\big (w-\langle (a_s-a_t)^{\bot},a_m\rangle \big),\,\,w\in\RR,\,\, s\ne t,\,\,0\leq s,t\leq d-1.
\end{equation}
The polynomial $h_{st}$ appears in the formulas for Kergin and Hakopian interpolation polynomials and 
plays an important role in our arguments. It is worth pointing out that $h_{st}$ is a multiple of the polynomial $q_{ts}$ used in relation (2.18) in \cite{boscalvi1} where basic properties of $q_{ts}$ are established. To make our exposition self-contained we state and prove a few properties of $h_{st}$. 
\begin{lemma}\label{lem:h.st}
Let $d\geq 3$ and let $A=(a_0,a_1,\ldots,a_{d-1})$ be a d-tuple of points in general position in $\RR^2$. Then
\begin{enumerate}
\item $h_{st}\big(\langle(a_s-a_t)^{\bot},a_v \rangle \big)=0$,\quad $0\leq v\leq d-1$;
\item $h'_{st}\big(\langle(a_s-a_t)^{\bot},a_s \rangle\big)=\prod_{m=0,m\ne s,t}^{d-1} \langle (a_s-a_t)^{\bot},(a_s-a_m)\rangle$;
\item If $u, v$ and $s$ are pairwise distinct and $\langle (a_s-a_t)^{\bot},(a_u-a_v)\rangle=0$, then\\ $h'_{st}\big(\langle(a_s-a_t)^{\bot},a_u \rangle\big)=0$. 
\end{enumerate}
\end{lemma}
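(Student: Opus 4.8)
The plan is to verify each of the three identities directly from the defining product \eqref{eqn:h.st}, treating the argument $w$ as a real variable and substituting the relevant inner products. Throughout I write $\xi_{st}:=(a_s-a_t)^{\bot}$ for brevity (mentally; the actual proof will spell it out), so that $h_{st}(w)=\prod_{m=0,\,m\ne s}^{d-1}\big(w-\langle \xi_{st},a_m\rangle\big)$, a monic polynomial of degree $d-1$ whose roots are the $d-1$ numbers $\langle \xi_{st},a_m\rangle$ for $m\in\{0,\dots,d-1\}\setminus\{s\}$.

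\textbf{Part (1).} If $v\ne s$, then $\langle\xi_{st},a_v\rangle$ is literally one of the factors' roots, so the product vanishes. The only point needing a remark is $v=s$: here I claim $\langle\xi_{st},a_s\rangle=\langle\xi_{st},a_t\rangle$, because $\xi_{st}=(a_s-a_t)^{\bot}$ is orthogonal to $a_s-a_t$, whence $\langle\xi_{st},a_s-a_t\rangle=0$. Since $t\ne s$, the value $\langle\xi_{st},a_t\rangle$ is among the roots, so $h_{st}$ vanishes there too. This disposes of all $v$.

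\textbf{Part (2).} Differentiating a product of linear factors and evaluating at a root kills every term except the one in which that factor was removed; concretely, for a monic $p(w)=\prod_{m\in M}(w-r_m)$ with the $r_m$ possibly repeated, $p'(r_{m_0})=\prod_{m\in M\setminus\{m_0\}}(r_{m_0}-r_m)$ whenever $r_{m_0}$ occurs with multiplicity one. Here the evaluation point is $\langle\xi_{st},a_s\rangle$, which by the Part~(1) computation equals $\langle\xi_{st},a_t\rangle$; among the roots $\{\langle\xi_{st},a_m\rangle: m\ne s\}$ this value is attained at $m=t$ and — because the $a_i$ are in general position, so no third point lies on the line through $a_s,a_t$ — at no other index. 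Hence the multiplicity is one and $h'_{st}(\langle\xi_{st},a_s\rangle)=\prod_{m\ne s,t}\big(\langle\xi_{st},a_t\rangle-\langle\xi_{st},a_m\rangle\big)=\prod_{m\ne s,t}\langle\xi_{st},a_t-a_m\rangle$. Finally $\langle\xi_{st},a_t-a_m\rangle=\langle\xi_{st},a_s-a_m\rangle$ again by $\langle\xi_{st},a_s-a_t\rangle=0$, giving the stated formula.

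\textbf{Part (3).} Now the evaluation point is $\langle\xi_{st},a_u\rangle$ with $u\ne s$, so it is a root, attained at $m=u$; the hypothesis $\langle\xi_{st},a_u-a_v\rangle=0$ with $v\ne s,u$ says this same value is also attained at $m=v$. Thus the root has multiplicity at least two, and differentiating the product once still leaves a factor $(w-\langle\xi_{st},a_v\rangle)$ in every surviving term, which vanishes at $w=\langle\xi_{st},a_u\rangle$; hence $h'_{st}(\langle\xi_{st},a_u\rangle)=0$. The one verification to make is that $v$ does indeed index a genuine factor of $h_{st}$, i.e. $v\ne s$, which is part of the pairwise-distinctness hypothesis on $u,v,s$.

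The computations are elementary; the only thing to be careful about is the multiplicity bookkeeping in (2) — distinguishing the role of the index $t$ (which always produces a coincident root $\langle\xi_{st},a_t\rangle=\langle\xi_{st},a_s\rangle$) from the other indices, where general position guarantees the roots are distinct from $\langle\xi_{st},a_s\rangle$ — and making sure in (3) that the extra coincidence forced by the orthogonality hypothesis genuinely raises the multiplicity rather than merely relabelling the already-present factor $m=t$. There is no real obstacle here; the lemma is a packaging of the three cases (a root; a simple root where the derivative is an explicit Vandermonde-type product; a double root where the derivative vanishes) for later use in the error formulas.
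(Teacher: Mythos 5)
Your proof is correct and follows essentially the same route as the paper: both arguments are direct computations with the product \eqref{eqn:h.st}, identifying vanishing factors (equivalently, root multiplicities) after substituting the inner products, with part (2) reducing to the single surviving term $m=t$ in the Leibniz expansion of $h'_{st}$. The only cosmetic difference is that in (2) you invoke general position to guarantee the root $\langle(a_s-a_t)^{\bot},a_s\rangle=\langle(a_s-a_t)^{\bot},a_t\rangle$ is simple, which is harmless but not needed, since the identity $p'(r_{m_0})=\prod_{m\ne m_0}(r_{m_0}-r_m)$ (and hence the paper's computation) holds irrespective of multiplicity.
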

\begin{proof}
Observe that $h_{st}\big(\langle (a_s-a_t)^{\bot},a_v\rangle\big)=\prod_{m=0,m\ne s}^{d-1}\langle (a_s-a_t)^{\bot},(a_v-a_m)\rangle$. The product in the  right hand side has the vanishing factor $\langle (a_s-a_t)^{\bot},a_v-a_v\rangle$ when $v\ne s$ and the vanishing factor $\langle (a_s-a_t)^{\bot},a_s-a_t\rangle$ when $v=s$. Thus $h_{st}(\langle (a_s-a_t)^{\bot},a_v\rangle)=0$. For the proof of assertion (2), we just compute the derivative of $h_{st}$, that is 
\begin{equation}\label{eqn:deriv.h.st.1}
h'_{st}(w)=\sum_{m=0,m\ne s}^{d-1}\prod_{j=0,j\ne m,s}^{d-1}\big (w- \langle (a_s-a_t)^{\bot},a_j\rangle\big ).
\end{equation}
It is easy to see that the vanishing factor $\langle (a_s-a_t)^{\bot},(a_s-a_t)\rangle$ is contained in the product $\prod_{j=0,j\ne m,s}^{d-1} \langle (a_s-a_t)^{\bot},(a_s-a_j)\rangle$ whenever $m\ne t$. Hence, in view of (\ref{eqn:deriv.h.st.1}), we have 
\begin{equation}\label{eqn:deriv.h.st.2}
h'_{st}\big(\langle (a_s-a_t)^{\bot},a_s\rangle \big)=\prod_{j=0,j\ne t,s}^{d-1} \langle (a_s-a_t)^{\bot},(a_s-a_j)\rangle.
\end{equation}
For the last assertion it is enough to verify that $\prod_{j=0,j\ne m,s}^{d-1} \langle (a_s-a_t)^{\bot},(a_u-a_j)\rangle=0$ for every $m\ne s$. To prove this we only notice that the product in the left hand side contains the vanishing factor $\langle (a_s-a_t)^{\bot},(a_u-a_u)\rangle$ if $m= v$ and the vanishing factor $\langle (a_s-a_t)^{\bot},(a_u-a_v)\rangle$ if $m\ne v$. 
\end{proof}
Theorem \ref{thm:formular-hakop} below gives a formula for Hakopian interpolation polynomial in $\RR^2$. It is similar to that of Kergin interpolation polynomial found by Bos and Calvi. Here, we denote by $cv(A)$ the convex hull of the set $A$. 
\begin{theorem}[Bos and Calvi]\label{thm:boscalvi} 
Let $A=(a_0,a_1,\ldots,a_{d-1})$ be a tuple of $d$ points in general position in the plane. Then the (extended) Kergin operator $\mathcal K[A]$ is continuously defined on $C^{1}(cv(A))$ by the formula
$$\mathcal K[A;f]=\sum_{j=0}^{d-1}f(a_j)P_j+\sum_{0\leq s<t\leq d-1}P_{st}\int\limits_{[a_s,a_t]}\textup{D}_{(a_t-a_s)^{\bot}}f ,$$
where $P_j$ is the real part of the $(j+1)$-st fundamental Lagrange polynomial corresponding to the complex nodes $a_0,\ldots,a_{d-1}$, that is
\begin{equation}\label{eqn:pj.general}
P_j(x^1,x^2)=\Re\big(\prod_{m=0,m\ne j}^{d-1}\frac{(x^1+ix^2)-a_m}{a_j-a_m}\big),
\end{equation}
and
\begin{equation}\label{eqn:pst.general}
P_{st}(x)=\frac{h_{st}\big( \langle (a_s-a_t)^{\bot},x\rangle\big)}{\|a_s-a_t\|^2h'_{st}\big( \langle (a_s-a_t)^{\bot},a_s\rangle\big)}.
\end{equation}
\end{theorem}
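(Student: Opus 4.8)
The plan is to show that the right-hand side, which I write $Q[f]$, is a polynomial of degree at most $d-1$ depending linearly and continuously on $f\in C^{1}(cv(A))$, and then to identify it with $\mathcal K[A;f]$ by means of the uniqueness part of the mean-value interpolation theorem. That $Q[f]\in\mathcal P_{d-1}(\RR^2)$ is immediate: $P_j$ is the real part of a complex polynomial of degree $d-1$, while $P_{st}$ is the polynomial $h_{st}$ (of degree $d-1$) precomposed with the linear form $x\mapsto\langle(a_s-a_t)^\bot,x\rangle$ and multiplied by the scalar $\int_{[a_s,a_t]}\textup{D}_{(a_t-a_s)^\bot}f$; continuity of $f\mapsto Q[f]$ in the $C^{1}$-topology can be read off the formula. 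Since the extended Kergin operator is continuous on $C^{1}(cv(A))$ (as recalled above, after \cite{boscalvi1}) and $C^{d-1}(cv(A))$ is dense in $C^{1}(cv(A))$, it is enough to prove $Q[f]=\mathcal K[A;f]$ for $f\in C^{d-1}(cv(A))$.

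For such $f$ I would invoke uniqueness in the mean-value interpolation theorem with $k=0$: it suffices to verify that $g:=Q[f]-f$ satisfies $\int_{[a_{i_0},\dots,a_{i_j}]}\textup{D}^{\alpha}g=0$ for all $0\le i_0<\cdots<i_j\le d-1$, all $|\alpha|=j$, and all $j=0,\dots,d-1$ (the passage from the consecutive sub-tuples of (\ref{eq:intercondMV}) to arbitrary ones being permutation-invariance). The case $j=0$ is $Q[f](a_i)=f(a_i)$, which holds because the fundamental Lagrange polynomial $\ell_m$ (with $P_m=\Re\ell_m$) takes the value $\delta_{mi}$ at $a_i$ while $P_{st}(a_i)=0$ by Lemma \ref{lem:h.st}(1). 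For $j=1$, writing a direction $v$ as a combination of $a_k-a_i$ and $(a_k-a_i)^\bot$, the component along $a_k-a_i$ gives $\int_{[a_i,a_k]}\textup{D}_{a_k-a_i}g=g(a_k)-g(a_i)=0$ by the case $j=0$, so only the normal direction matters. There the terms $f(a_m)P_m$ contribute nothing, since $\textup{D}_{w^{\bot}}\Re(\phi)=-\textup{D}_w\Im(\phi)$ for holomorphic $\phi$ and the $\ell_m$ are real at the nodes; and evaluating the $P_{st}$-terms by the chain rule together with Lemma \ref{lem:h.st}(1)--(3) shows that every term vanishes except the one with $\{s,t\}=\{i,k\}$, the normalizing constant $\|a_s-a_t\|^{2}h'_{st}(\langle(a_s-a_t)^{\bot},a_s\rangle)$ in $P_{st}$ being chosen exactly so that this surviving term equals the prescribed coefficient $\int_{[a_i,a_k]}\textup{D}_{(a_k-a_i)^{\bot}}f$.

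The conditions with $j\ge2$ then follow by induction on $j$ with no further use of the explicit form of $Q[f]$. By the general-position hypothesis $\{a_{i_1}-a_{i_0},\,a_{i_2}-a_{i_0}\}$ is a basis of $\RR^2$, so each $\textup{D}^{\alpha}$ with $|\alpha|=j\ge1$ is a linear combination of operators of the form $\textup{D}_{a_{i_1}-a_{i_0}}E$ or $\textup{D}_{a_{i_2}-a_{i_0}}E$ with $E$ a constant-coefficient operator of order $j-1$; and a Fubini / fundamental-theorem-of-calculus computation on the standard simplex yields
$$\int_{[a_{i_0},a_{i_1},\dots,a_{i_j}]}\textup{D}_{a_{i_1}-a_{i_0}}h=\int_{[a_{i_1},a_{i_2},\dots,a_{i_j}]}h-\int_{[a_{i_0},a_{i_2},\dots,a_{i_j}]}h$$
for every $C^{1}$ function $h$ (applied here to $h=Eg$). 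This reduces an order-$j$ condition on a $(j+1)$-point sub-tuple to order-$(j-1)$ conditions on $j$-point sub-tuples, so the cases $j=0$ and $j=1$ already treated imply all the rest. Uniqueness then gives $Q[f]=\mathcal K[A;f]$ on $C^{d-1}(cv(A))$, and density together with continuity extends the identity to all of $C^{1}(cv(A))$.

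I expect the only substantial work to be the case $j=1$ in the second paragraph: one must track, through the chain rule applied to $P_{st}$, precisely which simplex integrals of $h'_{st}$ vanish — this is where all three parts of Lemma \ref{lem:h.st} and the general-position assumption are used — and check that the surviving term carries the right normalization. The step most likely to cause sign errors is keeping the orientation convention $x\mapsto x^{\bot}=(-x^2,x^1)$ coherent between $h_{st}$, $P_{st}$, and the mean-value forms $\int_{[a_s,a_t]}\textup{D}_{(a_t-a_s)^{\bot}}f$. An alternative route would be to start from the Micchelli--Milman integral representation of $\mathcal K[A;f]$ for smooth $f$ and integrate by parts along the edges $[a_s,a_t]$ so as to reduce every higher-order derivative to the single first-order normal one, recovering the formula and then extending by density; but that path calls for comparable bookkeeping.
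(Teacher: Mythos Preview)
The paper does not prove Theorem~\ref{thm:boscalvi}: it is quoted from Bos and Calvi \cite{boscalvi1} and used as a black box, so there is no ``paper's own proof'' to compare your attempt against.

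That said, your strategy is the natural one and mirrors exactly what the paper \emph{does} prove for the Hakopian analogue (Theorem~\ref{thm:formular-hakop}): show that the candidate polynomial satisfies the defining interpolation conditions and invoke uniqueness. In the Hakopian case only the conditions $\int_{[a_u,a_v]}Q_{st}=\delta_{us}\delta_{vt}$ are needed, and the paper dispatches them with the same three-case analysis (parallel edge, non-parallel edge, equal edge) driven by Lemma~\ref{lem:h.st} that you outline for your $j=1$ step. Your additional ingredients --- the Cauchy--Riemann trick for the $P_j$ terms and the simplex recursion reducing $j\ge 2$ to $j\le 1$ --- are standard and correct; the recursion is the well-known identity for simplex functionals (sometimes called the Hermite--Genocchi difference rule).

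Your caveat about signs is not idle. Carrying the $j=1$ computation through with the conventions exactly as printed in the paper gives
\[
\int_{[a_i,a_k]}\textup{D}_{(a_k-a_i)^{\bot}}P_{st}
=\frac{\langle a_s-a_t,\,a_k-a_i\rangle}{\|a_s-a_t\|^{2}}\,\delta_{(s,t),(i,k)}=-\,\delta_{(s,t),(i,k)},
\]
so the surviving term picks up a minus sign; a two-point check ($a_0=(0,0)$, $a_1=(1,0)$) confirms that the formula as transcribed differs from $\mathcal K[a_0,a_1;f]$ by the sign of the $P_{01}$ term. This is a transcription/orientation issue (replacing $(a_t-a_s)^{\bot}$ by $(a_s-a_t)^{\bot}$ in the integral, or flipping the sign in $P_{st}$, fixes it) and does not affect any of the estimates the paper actually uses, since those involve only $\|P_{st}\|_D$. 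Your proof scheme is sound once this sign is tracked; the underlying argument is precisely the Kergin counterpart of the paper's proof of Theorem~\ref{thm:formular-hakop}.
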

\begin{theorem}\label{thm:formular-hakop}
Let $d\geq 2$ and let $A=(a_0,a_1,\ldots,a_{d-1})$ be a tuple of $d$ points in general position in the plane. Then the (extended) Hakopian operator $\mathcal H[A]$ is continuously defined on $C(cv(A))$ by the formula
\begin{equation}\label{for:hakopian}
\mathcal {H}[A;f]=\sum_{0\leq s<t\leq d-1} Q_{st} \int\limits_{[a_s,a_t]}f,
\end{equation}
where 
\begin{equation}\label{for:basic-hakop}
Q_{st}(x)=\frac{h'_{st}\big( \langle (a_s-a_t)^{\bot},x\rangle\big)}{h'_{st}\big( \langle (a_s-a_t)^{\bot},a_s\rangle\big)}.
\end{equation}
\end{theorem}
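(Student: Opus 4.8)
The plan is to show directly that the right-hand side of (\ref{for:hakopian}) meets the characterization (\ref{for:reduce-hakop}) of the Hakopian interpolant, and then to invoke the uniqueness in that characterization. First I would record the structural facts. The univariate polynomial $h_{st}$ of (\ref{eqn:h.st}) has degree $d-1$, so $h'_{st}$ has degree $d-2$; since $x\mapsto\langle(a_s-a_t)^\bot,x\rangle$ is affine, $Q_{st}\in\mathcal P_{d-2}(\RR^2)$, which is exactly the polynomial space occurring in (\ref{for:reduce-hakop}) for the $d$-tuple $A$, so the proposed operator maps into the right space. Each simplex functional $f\mapsto\int_{[a_s,a_t]}f$ is an average and hence continuous on $C(cv(A))$, so the proposed operator is continuous there. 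By (\ref{for:reduce-hakop}) it therefore suffices to establish the fundamental identity
\[ \int\limits_{[a_i,a_j]}Q_{st}=\begin{cases}1,&\{i,j\}=\{s,t\},\\ 0,&\text{otherwise},\end{cases}\qquad 0\le i<j\le d-1, \]
for then $\int_{[a_i,a_j]}\big(\sum_{s<t}Q_{st}\int_{[a_s,a_t]}f\big)=\int_{[a_i,a_j]}f$ for every pair $(i,j)$ and every $f\in C(cv(A))$, and uniqueness in (\ref{for:reduce-hakop}) identifies this sum with $\mathcal H[A;f]$. When $d=2$ the identity is the trivial $\int_{[a_0,a_1]}\big(\int_{[a_0,a_1]}f\big)=\int_{[a_0,a_1]}f$, because $Q_{01}\equiv1$; so I may assume $d\ge3$ and use Lemma \ref{lem:h.st}.

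To prove the identity I would write $\int_{[a_i,a_j]}Q_{st}=\int_0^1Q_{st}\big(a_i+\theta(a_j-a_i)\big)\,d\theta$ and track the scalar $\lambda(x):=\langle(a_s-a_t)^\bot,x\rangle$, which is affine in $\theta$ along the segment, $\lambda\big(a_i+\theta(a_j-a_i)\big)=\lambda(a_i)+\theta\big(\lambda(a_j)-\lambda(a_i)\big)$. If $\lambda(a_i)\ne\lambda(a_j)$, the substitution $w=\lambda(a_i)+\theta\big(\lambda(a_j)-\lambda(a_i)\big)$ together with the fundamental theorem of calculus turns the integral of $h'_{st}\circ\lambda$ into $\big(h_{st}(\lambda(a_j))-h_{st}(\lambda(a_i))\big)/\big(\lambda(a_j)-\lambda(a_i)\big)$, which vanishes by Lemma \ref{lem:h.st}(1) since $h_{st}(\lambda(a_v))=h_{st}\big(\langle(a_s-a_t)^\bot,a_v\rangle\big)=0$ for all $v$; hence $\int_{[a_i,a_j]}Q_{st}=0$. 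If $\lambda(a_i)=\lambda(a_j)$ and $\{i,j\}\ne\{s,t\}$, then $a_i-a_j$ is parallel to $a_s-a_t$; using $\langle v^\bot,v\rangle=0$, i.e. $\lambda(a_s)=\lambda(a_t)$, an overlap $\{i,j\}\cap\{s,t\}\ne\emptyset$ would force three distinct points among $a_i,a_j,a_s,a_t$ to be collinear, contradicting general position, so $\{i,j\}$ and $\{s,t\}$ are disjoint; then the integrand is the constant $h'_{st}(\lambda(a_i))$, which is $0$ by Lemma \ref{lem:h.st}(3) with $(u,v)=(i,j)$ (pairwise distinct from $s$), so again $\int_{[a_i,a_j]}Q_{st}=0$. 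Finally, if $\{i,j\}=\{s,t\}$ I may take $i=s$, $j=t$; since $\lambda(a_s)=\lambda(a_t)$, the argument of $h'_{st}$ is constantly $\lambda(a_s)$ along the segment, so $Q_{st}\equiv1$ there and $\int_{[a_s,a_t]}Q_{st}=1$. Combining the three cases proves the identity and hence the theorem.

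The only genuinely delicate point is the middle case: one must use the general-position hypothesis in just the right way to guarantee that $\lambda(a_i)=\lambda(a_j)$ with $\{i,j\}\ne\{s,t\}$ can occur only when $i,j,s$ are pairwise distinct, which is precisely what makes Lemma \ref{lem:h.st}(3) applicable. Everything else is the change of variable and the zero patterns of $h_{st}$, $h'_{st}$ recorded in Lemma \ref{lem:h.st}. I would also note that the number of pairs $\{s,t\}$ with $0\le s<t\le d-1$ equals $\binom d2=\dim\mathcal P_{d-2}(\RR^2)$, so the polynomials $Q_{st}$ form a fundamental system for the simplex functionals $\int_{[a_s,a_t]}$ on $\mathcal P_{d-2}(\RR^2)$, which makes the uniqueness step of (\ref{for:reduce-hakop}) transparent.
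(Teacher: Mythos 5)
Your proposal is correct and follows essentially the same route as the paper: reduce to the characterization (\ref{for:reduce-hakop}), verify $\int_{[a_u,a_v]}Q_{st}=\delta_{us}\delta_{vt}$ by integrating $h'_{st}$ along the segment, and dispose of the three cases with Lemma \ref{lem:h.st}(1) and (3) exactly as in the paper's proof (your case split by whether $\langle(a_s-a_t)^{\bot},a_i\rangle=\langle(a_s-a_t)^{\bot},a_j\rangle$ is just a reordering of the paper's, and your disjointness claim is a slightly stronger, equally valid form of the paper's observation that $s\ne u$, $s\ne v$).
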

\begin{proof} Of course, when $d=2$ then $Q_{01}=1$ and (\ref{for:hakopian}) is trivial. Now, suppose that $d\geq 3$. Since, for all $0\leq s<t\leq d-1$, $Q_{st}$ is a polynomial of degree at most $d-2$, the polynomial defined in the right hand side of (\ref{for:hakopian}) belongs to $\mathcal P_{d-2}(\RR^2)$. Let us call $H$ this polynomial. Thanks to (\ref{for:reduce-hakop}), we have to show that 
\begin{equation}\label{eqn:hakop.condi}
\int\limits_{[a_u,a_v]}H=\int\limits_{[a_u,a_v]}f,\quad\text{for all }\,0\leq u<v\leq d-1 .
\end{equation}
In view of (\ref{for:hakopian}), it suffices to verify that    
\begin{equation}\label{for:remain-equal}
\int\limits_{[a_u,a_v]} Q_{st}=\delta_{us}\delta_{vt}, \quad\text{for all }\, 0\leq s<t\leq d-1,\,\,\, 0\leq u<v\leq d-1,
\end{equation}
where $\delta$ is the Kronecker symbol. Looking at (\ref{for:basic-hakop}), we have  
\begin{multline}\label{for:basic-integral}
\int\limits_{[a_u,a_v]} Q_{st}
=\int\limits_{0}^1 Q_{st}(a_u+w(a_v-a_u))\textup{d}w \\
=\frac{1}{h'_{st}\big( \langle (a_s-a_t)^{\bot},a_s\rangle\big)}\int\limits_0^1 h'_{st}\big(\langle (a_s-a_t)^{\bot},a_u\rangle+ w\langle (a_s-a_t)^{\bot},(a_v-a_u)\rangle  \big)\textup{d}w.
\end{multline}  
To deal with the last integral we examine three cases. \\
First, we assume that $(s,t)=(u,v)$. Then, since $\langle (a_s-a_t)^{\bot},(a_t-a_s)\rangle=0$, relation (\ref{for:basic-integral}) gives 
$$\int\limits_{[a_s,a_t]} Q_{st}= 1.$$
The second case occurs when $(s,t)\ne (u,v)$ and $\langle (a_s-a_t)^{\bot},(a_v-a_u)\rangle=0$. Then $s\ne u$ and $s\ne v$. Indeed, if, for exemple, $s=u$, then the relation $\langle (a_s-a_t)^{\bot},(a_v-a_s)\rangle=0$ implies that $a_s, a_t$ and $a_v$ are collinear, contrary to the hypothesis. Now, the integral term in  (\ref{for:basic-integral}) reduces to 
$$\int\limits_{[a_u,a_v]} Q_{st}=\frac{h'_{st}(\langle (a_s-a_t)^{\bot},a_u\rangle )}{h'_{st}\big( \langle (a_s-a_t)^{\bot},a_s\rangle\big)}=0,$$
where we use Lemma \ref{lem:h.st}(3) in the second equality. The last case is when $(s,t)\ne (u,v)$ and $\langle (a_s-a_t)^{\bot},(a_v-a_u)\rangle\ne 0$ then, calculating the intergral (\ref{for:basic-integral}), we have
\begin{equation}\label{eqn:int.basic.3rdcase}
\int\limits_{[a_u,a_v]} Q_{st}=\frac{h_{st}(\langle (a_s-a_t)^{\bot},a_v\rangle)-h_{st}(\langle (a_s-a_t)^{\bot},a_u\rangle)}{h'_{st}\big( \langle (a_s-a_t)^{\bot},a_s\rangle\big)\langle (a_s-a_t)^{\bot},(a_v-a_u)\rangle}.
\end{equation}  
Now, Lemma \ref{lem:h.st}(1) follows that the right hand side of (\ref{eqn:int.basic.3rdcase}) vanishes and the proof is complete. 
\end{proof}
\subsection{} We use the above formulas to establish multivariate analogues of the classical Lebesgue inequality for Lagrange interpolation. For the proof, we refer to  \cite[Theorem 1.1]{boscalvi1} and \cite[Theorem 5]{liang.lu}. 
\begin{lemma}\label{lem:lebesgue.kergin}
Let $A=(a_0,a_1,\ldots,a_{d-1})$ be a tuple of $d$ points in general position in the plane, and let $K\subset\RR^2$ be a convex compact set containing $A$. Then for every $f\in  C^1(K)$ and every $Q_{d-1}\in \mathcal P_{d-1}(\RR^2)$ we have
\begin{equation}\label{eqn:lebesgue.kergin}
\|f-\mathcal K[A;f]\|_{K}\leq \big(1+\sum_{j=0}^{d-1}\|P_j\|_{K}\big)\|f-Q_{d-1}\|_{K}+\textup{diam}(K)\sum_{0\leq s<t\leq d-1} \|P_{st}\|_{K} \|\textup{D}f-\textup{D}Q_{d-1}\|_{K},
\end{equation}
where $\textup{diam}(K)$ is the diameter of $K$, the polynomials $P_j$ and $P_{st}$ are defined in (\ref{eqn:pj.general}) and (\ref{eqn:pst.general}) respectively. 
\end{lemma}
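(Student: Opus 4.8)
The plan is to run the standard Lebesgue-inequality argument: use that $\mathcal K[A]$ is a linear projector onto $\mathcal P_{d-1}(\RR^2)$ to reduce the estimate to bounding $\|\mathcal K[A;g]\|_K$ with $g=f-Q_{d-1}$, then insert the explicit Bos--Calvi formula of Theorem~\ref{thm:boscalvi} and estimate its two sums termwise.

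First I would fix $Q_{d-1}\in\mathcal P_{d-1}(\RR^2)$ and set $g:=f-Q_{d-1}\in C^1(K)$. By property (2) of mean-value interpolation, $\mathcal K[A;Q_{d-1}]=Q_{d-1}$, so by linearity $\mathcal K[A;f]=\mathcal K[A;g]+Q_{d-1}$, whence $f-\mathcal K[A;f]=g-\mathcal K[A;g]$. Therefore
$$\|f-\mathcal K[A;f]\|_K\leq\|g\|_K+\|\mathcal K[A;g]\|_K ,$$
and it remains to bound $\|\mathcal K[A;g]\|_K$ by the right-hand quantities evaluated at $g$.

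Next I would substitute the Bos--Calvi formula and take $\|\cdot\|_K$:
$$\|\mathcal K[A;g]\|_K\leq\sum_{j=0}^{d-1}|g(a_j)|\,\|P_j\|_K+\sum_{0\leq s<t\leq d-1}\|P_{st}\|_K\,\Big|\int\limits_{[a_s,a_t]}\textup{D}_{(a_t-a_s)^{\bot}}g\Big| .$$
For the first sum, $|g(a_j)|\leq\|g\|_K$ because $a_j\in A\subseteq K$. For the second sum the only point needing care is the uniform bound
$$\Big|\int\limits_{[a_s,a_t]}\textup{D}_{(a_t-a_s)^{\bot}}g\Big|=\Big|\int_0^1\textup{D}g\big(a_s+w(a_t-a_s)\big)\big((a_t-a_s)^{\bot}\big)\,\textup{d}w\Big|\leq\textup{diam}(K)\,\|\textup{D}g\|_K ,$$
obtained from Cauchy--Schwarz ($|\textup{D}g(x)(v)|\leq\|\textup{D}g(x)\|\,\|v\|$), from $\|(a_t-a_s)^{\bot}\|=\|a_t-a_s\|\leq\textup{diam}(K)$ since $x\mapsto x^{\bot}$ is a rotation, and from convexity of $K$, which keeps every point $a_s+w(a_t-a_s)$ inside $K$ so that $\|\textup{D}g(a_s+w(a_t-a_s))\|\leq\|\textup{D}g\|_K$. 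Inserting these two estimates and combining with the previous display yields exactly (\ref{eqn:lebesgue.kergin}), since $\|g\|_K=\|f-Q_{d-1}\|_K$ and $\|\textup{D}g\|_K=\|\textup{D}f-\textup{D}Q_{d-1}\|_K$.

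I do not expect a real obstacle: this is the planar transcription of the classical Lebesgue inequality. The one mildly delicate step is the termwise control of the second sum --- bounding the mean of a directional derivative along a chord of $K$ by $\textup{diam}(K)\,\|\textup{D}g\|_K$ --- and that is settled by the isometry property of $x\mapsto x^{\bot}$ together with convexity of $K$. An identical scheme, with Theorem~\ref{thm:formular-hakop} in place of Theorem~\ref{thm:boscalvi}, yields the corresponding Hakopian estimate, where $f-Q$ replaces $\textup{D}f-\textup{D}Q$.
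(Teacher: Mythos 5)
Your proof is correct: the projector property $\mathcal K[A;Q_{d-1}]=Q_{d-1}$, the termwise estimation of the Bos--Calvi formula, and the bound of the averaged directional derivative along a chord by $\textup{diam}(K)\,\|\textup{D}f-\textup{D}Q_{d-1}\|_K$ (Cauchy--Schwarz, isometry of $x\mapsto x^{\bot}$, convexity of $K$) give exactly (\ref{eqn:lebesgue.kergin}). The paper does not reprove this lemma but refers to \cite[Theorem 1.1]{boscalvi1}, and your argument is essentially that same standard Lebesgue-inequality reasoning, so there is nothing to add.
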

\begin{lemma}\label{lem:lebesgue.hakopian}
Let $A=(a_0,a_1,\ldots,a_{d-1})$ be a tuple of $d$ points in general position in the plane, and let $K\subset\RR^2$ be a convex compact set containing $A$. Then for every $f\in  C(K)$ and every $Q_{d-2}\in \mathcal P_{d-2}(\RR^2)$ we have 
\begin{equation}\label{eqn:lebesgue.hakopian}
\|f-\mathcal H[A;f]\|_{K}\leq \big(1+\sum_{0\leq s<t\leq d-1} \|Q_{st}\|_{K}\big) \|f-Q_{d-2}\|_{K},
\end{equation}
where the polynomials $Q_{st}$ are defined in (\ref{for:basic-hakop}).
\end{lemma}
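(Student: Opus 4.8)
The plan is to mimic the standard derivation of the Lebesgue inequality for Lagrange interpolation, using the fact that the Hakopian operator $\mathcal H[A]$ is a linear projector onto $\mathcal P_{d-2}(\RR^2)$ together with the explicit representation of Theorem \ref{thm:formular-hakop}. First I would fix an arbitrary $Q_{d-2}\in\mathcal P_{d-2}(\RR^2)$ and write, using the triangle inequality,
$$\|f-\mathcal H[A;f]\|_K\leq \|f-Q_{d-2}\|_K+\|Q_{d-2}-\mathcal H[A;f]\|_K.$$
Since $\mathcal H[A]$ reproduces polynomials of degree at most $d-2$, we have $Q_{d-2}=\mathcal H[A;Q_{d-2}]$, so the second term equals $\|\mathcal H[A;Q_{d-2}-f]\|_K=\|\mathcal H[A;f-Q_{d-2}]\|_K$ by linearity.

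The main step is then to bound $\|\mathcal H[A;g]\|_K$ by $\big(\sum_{0\leq s<t\leq d-1}\|Q_{st}\|_K\big)\|g\|_K$ for an arbitrary $g\in C(K)$ — here $g=f-Q_{d-2}$. Using formula (\ref{for:hakopian}), for every $x\in K$ we have
$$|\mathcal H[A;g](x)|=\Big|\sum_{0\leq s<t\leq d-1}Q_{st}(x)\int\limits_{[a_s,a_t]}g\Big|\leq \sum_{0\leq s<t\leq d-1}|Q_{st}(x)|\,\Big|\int\limits_{[a_s,a_t]}g\Big|.$$
The key observation is that each simplex functional $\int_{[a_s,a_t]}$ has total mass one: by the definition of the simplex functional with $d=1$, $\int_{[a_s,a_t]}g=\int_0^1 g(a_s+w(a_t-a_s))\,\textup dw$, and since the segment $[a_s,a_t]$ lies in the convex set $K$ we get $\big|\int_{[a_s,a_t]}g\big|\leq \|g\|_K$. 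Taking the supremum over $x\in K$ gives $\|\mathcal H[A;g]\|_K\leq \big(\sum_{0\leq s<t\leq d-1}\|Q_{st}\|_K\big)\|g\|_K$. Combining with the triangle inequality above yields (\ref{eqn:lebesgue.hakopian}).

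I do not expect a serious obstacle here: the argument is a routine operator-norm estimate, and the only two facts needed are that $\mathcal H[A]$ is a projector onto $\mathcal P_{d-2}(\RR^2)$ (which is built into the definition via (\ref{for:reduce-hakop})) and the explicit formula of Theorem \ref{thm:formular-hakop}, both already available. The one point requiring a line of care is the normalization of the simplex functional on a segment and the use of convexity of $K$ to ensure $[a_s,a_t]\subset K$, so that $\big|\int_{[a_s,a_t]}g\big|\leq\|g\|_K$; this is immediate since $A\subset K$ and $K$ is convex. Finally one notes that $\mathcal H[A;f]$ itself makes sense for merely continuous $f$, which is exactly the content of Theorem \ref{thm:formular-hakop}, so no extra smoothness is needed and the statement of the lemma is consistent.
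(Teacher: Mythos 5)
Your proof is correct, and it is essentially the standard Lebesgue-inequality argument: the paper itself omits the proof of this lemma, referring to \cite[Theorem 1.1]{boscalvi1} and \cite[Theorem 5]{liang.lu}, where the same reasoning (triangle inequality, reproduction of $\mathcal P_{d-2}(\RR^2)$ by $\mathcal H[A]$, and the operator-norm bound via the representation of Theorem \ref{thm:formular-hakop} together with the unit mass of the segment functionals) is used. No gap to report.
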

\section{Kergin and Hakopian interpolants at Leja sequences for the disk}
  
\begin{definition}
Let $D$ be the closed unit disk in the complex plane and $E=(e_n: n\in \NN)$ be a sequence of points in $D$. One says that $E$ is a Leja sequence for $D$ if the following property hold true,
$$|\prod_{j=0}^{d-1}(e_d-e_j)|=\max_{z\in D}|\prod_{j=0}^{d-1}(z-e_j)|,\quad \text{for all }d\geq 1.$$
\end{definition}
A $d$-tuple $E_d=(e_0, e_1,\ldots,e_{d-1})$ is called a $d$-Leja section.  In this paper we only consider Leja sequences whose first entry is equal to 1. It is not diffcult to describe the structure of Leja sequences for $D$. The following theorem is proved in \cite{biacal}.
\begin{theorem}[Bia{\l}as-Cie{\.z} and Calvi]\label{structure-Leja}
The structure of a Leja sequence $E=(e_n:n\in\NN)$ for the unit disk $D$ with $e_0=1$ is given by the following rules. 
\begin{enumerate}
\item The underlying set of the $2^n$-Leja section $E_{2^n}$ consists of the $2^n$-th roots of unity 
\item The $2^{n+1}$-Leja section is $(E_{2^n},\rho E^{(1)}_{2^n})$,  where $\rho$ is a $2^n$-th roots of -1 and $E^{(1)}_{2^n}$ is the $2^n$-Leja section of a Leja sequence $E^{(1)}=(e^{(1)}_n:n\in\NN)$ for the unit disk with $e^{(1)}_0=1$.  
\end{enumerate} 
\end{theorem}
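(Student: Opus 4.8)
The plan is to prove, by induction on $n$, the equivalent reformulation of (1): \emph{every} $2^n$-Leja section with first entry $1$ has underlying set the group $\mu_{2^n}$ of $2^n$-th roots of unity; assertion (2) will drop out of the inductive step. The case $n=0$ is trivial. Assume the statement for $n$ and let $(e_0,\dots,e_{2^{n+1}-1})$ be a $2^{n+1}$-Leja section with $e_0=1$. Its initial $2^n$-section is again a Leja section with first entry $1$, so by the inductive hypothesis $\{e_0,\dots,e_{2^n-1}\}=\mu_{2^n}$, whence $\prod_{j=0}^{2^n-1}(z-e_j)=z^{2^n}-1$. Since $\|z^{2^n}-1\|_D=2$ with the maximum attained exactly on the set $\mu_{2^n}^-$ of $2^n$-th roots of $-1$, and $\prod_{j=0}^{2^n-1}(e_{2^n}-e_j)=e_{2^n}^{2^n}-1$, the Leja property at index $2^n$ forces $e_{2^n}=:\rho\in\mu_{2^n}^-$. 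Set $f_k:=\rho^{-1}e_{2^n+k}$ for $0\le k\le 2^n-1$, so $f_0=1$. The crux is to show that $(f_0,\dots,f_{2^n-1})$ is itself a $2^n$-Leja section. Granting this, the inductive hypothesis gives $\{f_0,\dots,f_{2^n-1}\}=\mu_{2^n}$, so $\{e_{2^n},\dots,e_{2^{n+1}-1}\}=\rho\,\mu_{2^n}=\mu_{2^n}^-$ and $\{e_0,\dots,e_{2^{n+1}-1}\}=\mu_{2^n}\cup\mu_{2^n}^-=\mu_{2^{n+1}}$; moreover $(e_0,\dots,e_{2^{n+1}-1})=\big(E_{2^n},\rho(f_0,\dots,f_{2^n-1})\big)$ with $\rho\in\mu_{2^n}^-$, and $(f_0,\dots,f_{2^n-1})$, being a $2^n$-Leja section with first entry $1$, extends to a Leja sequence (at each step a maximiser exists by compactness of $D$); this is precisely what (1) and (2) assert about $2^{n+1}$-Leja sections.

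The one computational ingredient is the elementary identity, valid for any polynomial $p$ and any $\zeta$ with $|\zeta|=1$,
$$\big|(\zeta^{2^n}+1)p(\zeta)\big|^2=2\,|p(\zeta)|^2\big(1+\Re\,\zeta^{2^n}\big),$$
obtained by expanding $|\zeta^{2^n}p(\zeta)+p(\zeta)|^2$ and using $|\zeta^{2^n}|=1$. Since by the maximum modulus principle the sup-norm on $D$ of a polynomial is attained on $\partial D$, and $1+\Re\,\zeta^{2^n}\le 2$, this gives $\|(\zeta^{2^n}+1)p\|_D\le 2\|p\|_D$; and I would extract the sharper fact needed below: \emph{if} $|p|$ attains its maximum over $D$ at some point $\zeta_0$ with $\zeta_0^{2^n}=1$, \emph{then} $\|(\zeta^{2^n}+1)p\|_D=2\|p\|_D$ and the maximisers of $|(\zeta^{2^n}+1)p|$ over $D$ are exactly the $\zeta$ with $\zeta^{2^n}=1$ and $|p(\zeta)|=\|p\|_D$.

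To prove that $(f_0,\dots,f_{2^n-1})$ is a $2^n$-Leja section I would induct on $k$: if $(f_0,\dots,f_{k-1})$ is a $k$-Leja section with $f_0=1$ and $1\le k\le 2^n-1$, then so is $(f_0,\dots,f_k)$. Using $\{e_0,\dots,e_{2^n-1}\}=\mu_{2^n}$ one has $\prod_{j=0}^{2^n+k-1}(z-e_j)=(z^{2^n}-1)\prod_{j=0}^{k-1}(z-\rho f_j)$; substituting $z=\rho\zeta$, using $\rho^{2^n}=-1$ and the fact that $\zeta\mapsto\rho\zeta$ maps $D$ bijectively onto itself, the Leja property at index $2^n+k$ becomes: $f_k$ maximises $|(\zeta^{2^n}+1)g_k(\zeta)|$ over $D$, where $g_k(\zeta):=\prod_{j=0}^{k-1}(\zeta-f_j)$. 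On the other hand, since $(f_0,\dots,f_{k-1})$ is a $k$-Leja section it extends to a $2^n$-Leja section with first entry $1$; the outer inductive hypothesis applied to that section shows that its $(k{+}1)$-st point lies in $\mu_{2^n}$ and maximises $|g_k|$ over $D$, so $|g_k|$ attains its maximum at a $2^n$-th root of unity. Feeding this into the sharper fact above (with $p=g_k$) identifies the maximisers of $|(\zeta^{2^n}+1)g_k|$ as precisely the points of $\mu_{2^n}$ where $|g_k|$ is largest; since $f_k$ is such a maximiser, $f_k\in\mu_{2^n}$ and $|g_k(f_k)|=\|g_k\|_D$, i.e.\ $f_k$ satisfies the Leja condition at index $k$. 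Finally $g_k$ is monic, so $\|g_k\|_D\ge 1$, giving $f_k\notin\{f_0,\dots,f_{k-1}\}$; this completes the inner induction, and $k=2^n-1$ closes the outer one.

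The point I expect to be the genuine obstacle is the one finessed above. To obtain $\max_D|(\zeta^{2^n}+1)g_k|\ge 2\|g_k\|_D$ one must know that $|g_k|$ already attains its maximum at a $2^n$-th root of unity, and this is \emph{false} for the product of the distances to an arbitrary subset of $\mu_{2^n}$ — it really uses the Leja-section structure of $(f_0,\dots,f_{k-1})$. The device that avoids a delicate self-contained extremal estimate is to embed a partial Leja section into a full dyadic block and quote the inductive hypothesis there.
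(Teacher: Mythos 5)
Your proposal is correct, but there is nothing in this paper to compare it with: Theorem \ref{structure-Leja} is stated here without proof and attributed to Bia{\l}as-Cie{\.z} and Calvi \cite{biacal}. Measured against the original argument, your reconstruction uses the same essential ingredients: induction over dyadic blocks, the identity $\prod_{j<2^n}(z-e_j)=z^{2^n}-1$ once the first block is known to be the $2^n$-th roots of unity, the fact that $|z^{2^n}-1|\le 2$ on $D$ with equality exactly at the $2^n$-th roots of $-1$ (which pins down $e_{2^n}=\rho$), and the rotation $z=\rho\zeta$ turning the Leja condition at index $2^n+k$ into the maximization of $|(\zeta^{2^n}+1)\,g_k(\zeta)|$. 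The step you flag as the genuine obstacle is indeed the crux, and your device for it is sound: extending the partial section $(f_0,\dots,f_{k-1})$ to a full $2^n$-Leja section (maximizers exist by compactness of $D$) and applying the induction hypothesis for $n$ produces a maximizer of $|g_k|$ among the $2^n$-th roots of unity, after which your ``sharper fact'' (correct, via the factorized bound and the fact that the nonconstant polynomial $(\zeta^{2^n}+1)g_k(\zeta)$ attains its maximum modulus only on $\partial D$, forcing equality in both factors) yields $f_k^{2^n}=1$ and $|g_k(f_k)|=\|g_k\|_D$, i.e.\ exactly the Leja condition at index $k$ for the rotated block; taking $k=2^n-1$ closes the outer induction and delivers both assertions. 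In a full write-up you should spell out the two small points left implicit: that the restriction to boundary maximizers is what lets you split the equality case, and that $\|g_k\|_D\ge 1$ because $g_k$ is monic (Cauchy's estimate for the leading coefficient), which gives the distinctness of the new node from the previous ones. With those details added, your sketch is a complete and essentially self-contained proof, very close in spirit to the one in \cite{biacal}.
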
 
  Next, we use Lebesgue-type inequalities for Kergin and Hakopian interpolants along with the method of Bos and Calvi to prove the following convergence results.
\begin{theorem}\label{maintheorem1}
 Let $\mathcal K[E_d;f]$ denote the Kergin interpolation polynomial of $f$ with respect to the Leja section $E_d=(e_0,\ldots,e_{d-1})$ of a Leja sequence $E=(e_n:n\in\NN)$ for $D$.
\begin{enumerate}
\item If $f\in C^4(D)$, then $\mathcal K[E_d;f]$ converges uniformly to $f$ on $D$ as $d\to\infty$;
\item If $f\in C^\infty(D)$, then $\textup{D}^\beta\big(\mathcal K[E_d;f]\big)$ converges uniformly to $\textup{D}^\beta f$ on $D$ as $d\to\infty$, for every two-dimensional index $\beta$. 
\end{enumerate}
 \end{theorem}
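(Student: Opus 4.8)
The plan is to combine the Lebesgue-type inequality of Lemma~\ref{lem:lebesgue.kergin}, taken with $K=D$, with two ingredients: quantitative (Jackson-type) polynomial approximation of $f$ together with its first-order derivatives, and polynomial upper bounds for the two node-dependent sums $\sum_{j=0}^{d-1}\|P_j\|_D$ and $\sum_{0\le s<t\le d-1}\|P_{st}\|_D$ attached to the Leja section $E_d$. First I would note that, since $e_0=1$, an easy induction on the dyadic levels of Theorem~\ref{structure-Leja} shows that every entry of $E$ lies on the unit circle; hence no three nodes of $E_d$ are collinear, so $E_d$ is in general position, $cv(E_d)\subset D$, and Theorem~\ref{thm:boscalvi} and Lemma~\ref{lem:lebesgue.kergin} apply with $A=E_d$ and $K=D$.

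For part~(1), fix $f\in C^4(D)$. Using a Jackson theorem for simultaneous polynomial approximation on $D$, choose $Q_{d-1}\in\mathcal P_{d-1}(\RR^2)$ with $\|f-Q_{d-1}\|_D=o(d^{-4})$ and $\|\textup{D}f-\textup{D}Q_{d-1}\|_D=o(d^{-3})$, the little-$o$ coming from the uniform continuity of the fourth-order derivatives of $f$ on $D$. By~(\ref{eqn:pj.general}) one has $\|P_j\|_D\le\|\ell_j\|_D$, where $\ell_j$ is the $j$-th fundamental Lagrange polynomial of the complex nodes $e_0,\dots,e_{d-1}$, so $\sum_j\|P_j\|_D$ is controlled by the Lebesgue constant of $E_d$, which by the estimates on Leja sequences in \cite{jpcpvm,calvimanh2} grows at most polynomially in $d$. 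Granting the bound $\sum_{0\le s<t\le d-1}\|P_{st}\|_D=O(d^3)$ explained below, Lemma~\ref{lem:lebesgue.kergin} yields $\|f-\mathcal K[E_d;f]\|_D\le\big(1+\sum_j\|P_j\|_D\big)\,o(d^{-4})+2\cdot O(d^3)\cdot o(d^{-3})\to 0$.

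For part~(2), take $f\in C^\infty(D)$ and pick a sequence $Q_{d-1}\in\mathcal P_{d-1}(\RR^2)$ with $Q_{d-1}\to f$ in $C^\infty(D)$, i.e.\ $\|\textup{D}^\alpha(f-Q_{d-1})\|_D=O(d^{-k})$ for every multi-index $\alpha$ and every $k$. Since $\mathcal K[E_d]$ is a linear projector onto $\mathcal P_{d-1}(\RR^2)$, we have $\mathcal K[E_d;f]-f=\mathcal K[E_d;f-Q_{d-1}]+(Q_{d-1}-f)$, and the explicit formula of Theorem~\ref{thm:boscalvi} gives $\|\mathcal K[E_d;f-Q_{d-1}]\|_D\le\big(\sum_j\|P_j\|_D\big)\|f-Q_{d-1}\|_D+2\big(\sum_{s<t}\|P_{st}\|_D\big)\|\textup{D}(f-Q_{d-1})\|_D$, which is smaller than any negative power of $d$ by the polynomial bounds on the two sums and the choice of $Q_{d-1}$. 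Applying $\textup{D}^\beta$ and the Markov inequality for $D$ (whose Markov exponent is $2$), namely $\|\textup{D}^\beta P\|_D=O\big(d^{2|\beta|}\|P\|_D\big)$ for $P\in\mathcal P_{d-1}(\RR^2)$, we get $\|\textup{D}^\beta\mathcal K[E_d;f-Q_{d-1}]\|_D\to 0$, while $\textup{D}^\beta(Q_{d-1}-f)\to 0$ by construction; hence $\textup{D}^\beta\mathcal K[E_d;f]\to\textup{D}^\beta f$ uniformly on $D$.

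The heart of the matter, and the step I expect to be the main obstacle, is the polynomial bound $\sum_{0\le s<t\le d-1}\|P_{st}\|_D=O(d^3)$. By~(\ref{eqn:pst.general}) and Lemma~\ref{lem:h.st}(2), and since $\langle(e_s-e_t)^{\bot},x\rangle$ ranges over $[-\|e_s-e_t\|,\|e_s-e_t\|]$ as $x$ runs over $D$,
$$\|P_{st}\|_D=\frac{\max_{|w|\le\|e_s-e_t\|}|h_{st}(w)|}{\|e_s-e_t\|^2\,\big|\prod_{m\ne s,t}\langle(e_s-e_t)^{\bot},(e_s-e_m)\rangle\big|},$$
so one must bound the numerator from above and the product of signed distances in the denominator from below, uniformly in the pair $(s,t)$. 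I would follow the method of Bos and Calvi \cite{boscalvi1}, who treated the roots of unity, but exploit instead the recursive structure of $E_d$ from Theorem~\ref{structure-Leja}: splitting the pairs $(s,t)$ according to the dyadic blocks they occupy, and using the Leja extremal property together with the sharp estimates for Leja sections in \cite{biacal,jpcpvm,calvimanh2} to control $h_{st}$ and $h_{st}'$ at the relevant nodes. Because the resulting estimates are weaker than in the fully symmetric case of $C_n$, a higher order of smoothness ($C^4$ instead of $C^2$) is forced, which is exactly what is assumed here.
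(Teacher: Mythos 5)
Your argument is, in substance, the paper's own proof, with one mild deviation and one big ingredient left unexecuted. For part (1) the paper does exactly what you propose: it feeds Lemma \ref{jackson} (which, for $f\in C^4$, gives a single sequence $Q_{d-1}$ with \emph{both} $\|f-Q_{d-1}\|_D$ and $\|\textup{D}f-\textup{D}Q_{d-1}\|_D$ of order $o(d^{-3})$ — not the $o(d^{-4})$ you claim for the first quantity, though this is harmless) into Lemma \ref{lem:lebesgue.kergin} with $K=D$, together with $\sum_j\|P_j\|_D=O(d\log d)$ from (\ref{eqn:first.lebesgue.type}) and $\|P_{st}\|_D\le 2d$ from Theorem \ref{thm.second.lebesgue.type}, so that $\sum_{s<t}\|P_{st}\|_D\le d^2(d-1)$; note that your phrase ``grows at most polynomially'' for the Lebesgue constant is not by itself enough against an $o(d^{-4})$ rate — you really need the concrete $O(d\log d)$ (or at least $o(d^{3})$) bound you cite. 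For part (2) you argue via the projector identity $\mathcal K[E_d;f]-f=\mathcal K[E_d;f-Q_{d-1}]+(Q_{d-1}-f)$ and a sequence $Q_{d-1}\to f$ in $C^\infty(D)$ with rapidly decreasing errors for all derivatives; this works, but the existence of such a sequence is itself a nontrivial fact (normally obtained from rapid sup-norm approximation by a Markov-plus-telescoping argument), and the paper avoids it: it telescopes the interpolants themselves, bounds $\|\textup{D}^\beta(\mathcal K[E_{n+1};f]-\mathcal K[E_n;f])\|_D$ by (\ref{ineq.markov}) and the part-(1) error estimate with $k=2|\beta|+6$, sums the resulting series, and concludes by the classical argument — so it only ever needs Lemma \ref{jackson} for one finite $k$ depending on $\beta$. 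Finally, you correctly single out $\sum_{s<t}\|P_{st}\|_D=O(d^3)$ as the crux, and your sketch (upper bound for $h_{st}$ on $[-\|e_s-e_t\|,\|e_s-e_t\|]$, lower bound for the product $h'_{st}(\langle(e_s-e_t)^{\bot},e_s\rangle)$, exploiting the dyadic block structure of Theorem \ref{structure-Leja}) is precisely the paper's route, carried out via Lemma \ref{pro:produc-leja} ($\prod_{m\ne s}|e_s-e_m|\ge 2^r$), the trigonometric Lemmas \ref{lem:simplify.factor.triog}--\ref{lem:one.argument.missing}, and Lemmas \ref{lem:deno-estimate}--\ref{lem:nume-estimate-kerg}, yielding the clean bound $\|P_{st}\|_D\le 2d$; but since this is where essentially all of the paper's work lies, a self-contained write-up of your proposal would still have to supply that estimate rather than gesture at it.
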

 \begin{corollary}\label{cor.expand.ker} For every $f\in C^\infty(D)$, the series
 $$\sum_{d=0}^\infty \int\limits_{[e_0,\dots,e_d]}  \textup{D}^df(\cdot, x-e_0,\dots,x-e_{d-1})$$
converges to $f$ uniformly on $D$. Moreover, the convergence extends to all derivatives. 
 \end{corollary}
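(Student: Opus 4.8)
The plan is to deduce Corollary \ref{cor.expand.ker} from Theorem \ref{maintheorem1} by telescoping the Kergin interpolants. First I would recall the Newton-type identity for mean-value (Kergin) interpolation: since $\LL^{(0)}[e_0,\dots,e_{d}]$ is a projector onto $\mathcal P_d(\RR^2)$ that agrees with $\LL^{(0)}[e_0,\dots,e_{d-1}]$ on $\mathcal P_{d-1}(\RR^2)$, the difference $\mathcal K[E_{d+1};f]-\mathcal K[E_d;f]$ is a polynomial of degree exactly $d$ (the "leading Newton term"), and the classical formula for this increment is precisely
\begin{equation*}
\mathcal K[E_{d+1};f](x)-\mathcal K[E_d;f](x)=\int\limits_{[e_0,\dots,e_d]}\textup{D}^df(\cdot)(x-e_0,\dots,x-e_{d-1}),
\end{equation*}
with the convention that the $d=0$ term is $f(e_0)$. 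This is the standard Newton form of Kergin interpolation (see \cite{bloomcalvi}, \cite{filipsson}); I would either cite it or verify it quickly by checking that the right-hand side is a polynomial of degree $d$ in $x$ annihilated by the interpolation functionals attached to $e_0,\dots,e_{d-1}$ and reproducing the correct top-degree behaviour, which pins it down uniquely as the increment.

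Granting this identity, the partial sum $\sum_{d=0}^{n} \int_{[e_0,\dots,e_d]}\textup{D}^df(\cdot)(x-e_0,\dots,x-e_{d-1})$ telescopes to $\mathcal K[E_{n+1};f](x)$. Then uniform convergence of the series on $D$ is exactly the statement that $\mathcal K[E_{n+1};f]\to f$ uniformly on $D$, which is Theorem \ref{maintheorem1}(1) (valid since $f\in C^\infty(D)\subset C^4(D)$). For the "moreover" part, applying $\textup{D}^\beta$ to the telescoped partial sum gives $\textup{D}^\beta\big(\mathcal K[E_{n+1};f]\big)$, and Theorem \ref{maintheorem1}(2) says this converges uniformly to $\textup{D}^\beta f$ on $D$ for every two-dimensional index $\beta$; hence the differentiated series converges uniformly to $\textup{D}^\beta f$.

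The only genuine content beyond invoking Theorem \ref{maintheorem1} is justifying the Newton increment formula, so that is where I would concentrate. The cleanest route is: (i) note $R_d(x):=\int_{[e_0,\dots,e_d]}\textup{D}^df(\cdot)(x-e_0,\dots,x-e_{d-1})$ is a polynomial in $x$ of degree at most $d$, since expanding $\textup{D}^df(\cdot)(x-e_0,\dots,x-e_{d-1})$ the dependence on $x$ enters only through the $d$ linear factors $x-e_j$; (ii) observe $R_d$ vanishes under the Kergin functionals based at $E_d$ because each such functional, applied to $R_d$, produces a simplex integral of a derivative $\textup{D}^\alpha$ with $|\alpha|\le d-1$ hitting a product containing at least one factor $(x-e_j)$ that is integrated out to zero — more carefully, one uses the defining property \eqref{eq:intercondMV} together with the fact that $\mathcal K[E_{d+1};f]$ and $\mathcal K[E_d;f]$ share all lower-order interpolation data; (iii) conclude $R_d=\mathcal K[E_{d+1};f]-\mathcal K[E_d;f]$ by uniqueness in the defining interpolation problem for $\mathcal K[E_{d+1};f]-\mathcal K[E_d;f]\in\mathcal P_d(\RR^2)$. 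I expect step (ii), the verification that $R_d$ carries exactly the right interpolation conditions, to be the main (though standard) obstacle; it is essentially the known Newton/Hermite-Genocchi structure of Kergin interpolation and can be quoted from \cite{bloomcalvi} or \cite{filipsson} if a self-contained derivation is deemed too long.
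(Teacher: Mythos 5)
Your proposal is correct and follows essentially the same route as the paper: the paper simply notes that, by Newton's formula for Kergin interpolation (cited to Micchelli, \cite{micchelli1}), the $(d+1)$-st partial sum of the series is exactly $\mathcal K[E_d;f]$, and then invokes Theorem \ref{maintheorem1} for uniform convergence and for all derivatives. The only difference is that you sketch a verification of the Newton increment identity, which the paper dispenses with by citation.
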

 \begin{proof} In view of Newton's formula for Kergin interpolation (see \cite[Theorem 2]{micchelli1}), the $(d+1)$-st partial sum of the series is exactly $\mathcal K[E_d;f]$. \end{proof}
\begin{theorem}\label{maintheorem2}
 Let $\mathcal H[E_d;f]$ denote the Hakopian interpolation polynomial of a function $f$ with respect to the Leja section $E_d=(e_0,\ldots,e_{d-1})$ of a Leja sequence $E=(e_n:n\in\NN)$ for $D$.
\begin{enumerate}
\item If $f\in C^5(D)$, then $\mathcal H[E_d;f]$ converges uniformly to $f$ on $D$ as $d\to\infty$;
\item If $f\in C^\infty(D)$, then $\textup{D}^\beta\big(\mathcal H[E_d;f]\big)$ converges uniformly to $\textup{D}^\beta f$ on $D$ as $d\to\infty$, for every two-dimensional index $\beta$. 
\end{enumerate}       
\end{theorem}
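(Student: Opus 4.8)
The plan is to mimic the argument for the Kergin case (Theorem \ref{maintheorem1}), using the Lebesgue-type inequality of Lemma \ref{lem:lebesgue.hakopian} in place of Lemma \ref{lem:lebesgue.kergin}. Taking $K=D$, for every $Q_{d-2}\in\mathcal P_{d-2}(\RR^2)$ we have
\[
\|f-\mathcal H[E_d;f]\|_D\leq\Big(1+\sum_{0\le s<t\le d-1}\|Q_{st}\|_D\Big)\|f-Q_{d-2}\|_D,
\]
where the $Q_{st}$ are the polynomials attached to the $d$-Leja section $E_d$ by (\ref{for:basic-hakop}). The first step is therefore to obtain a polynomial bound (say $O(d^p\log d)$ for some fixed $p$) for the \emph{Lebesgue function} $\Lambda_d:=1+\sum_{0\le s<t\le d-1}\|Q_{st}\|_D$ of Hakopian interpolation at a Leja section. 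This is the heart of the matter and is exactly where the structural results on Leja sequences from \cite{biacal, jpcpvm, calvimanh2} enter: one expands each $Q_{st}$ using (\ref{eqn:h.st})–(\ref{for:basic-hakop}), writes $\|a_s-a_t\|$, $h'_{st}(\langle(a_s-a_t)^\bot,a_s\rangle)=\prod_{m\ne s,t}\langle(a_s-a_t)^\bot,(a_s-a_m)\rangle$ (Lemma \ref{lem:h.st}(2)) and the numerator $h_{st}(\langle(a_s-a_t)^\bot,x\rangle)$ in terms of pairwise distances of Leja points and of $|z-e_j|$ for $z\in D$, and then invokes the known lower bounds for products of differences of Leja points together with the factorial-type upper bounds for $\max_{z\in D}|\prod(z-e_j)|$. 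This should produce the same kind of estimate that Bos and Calvi obtained for $C_n$, but now valid along the full sequence $E_d$ rather than only at $d=2^n$; presumably the dyadic decomposition in Theorem \ref{structure-Leja} is used to reduce a general $d$ to nearby powers of $2$.

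The second step, for part (1), is the standard approximation-theoretic input: if $f\in C^5(D)$ then by a Jackson-type theorem (extend $f$ to a $C^5$ function with compact support, or use a Whitney extension, and apply the classical multivariate Jackson inequality) there are polynomials $Q_{d-2}\in\mathcal P_{d-2}(\RR^2)$ with $\|f-Q_{d-2}\|_D=O(d^{-5})$. Choosing the degree of smoothness $5$ so that $d^{-5}\Lambda_d\to 0$ (the extra unit of smoothness over the Kergin case, where $C^4$ sufficed, reflects the slightly weaker exponent one gets from Lemma \ref{lem:lebesgue.hakopian} compared to Lemma \ref{lem:lebesgue.kergin}, and is the reason the hypothesis is $C^5$ rather than $C^4$), the right-hand side of the Lebesgue inequality tends to $0$ and part (1) follows.

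For part (2), the plan is to differentiate. Fix a two-dimensional index $\beta$ and write $|\beta|=k$. Using a Markov-type inequality on the disk, $\|\textup{D}^\beta R\|_D\le C_\beta d^{2k}\|R\|_D$ for $R\in\mathcal P_d(\RR^2)$, applied to $R=\mathcal H[E_d;f]-\mathcal H[E_{d'};f]$ with $d'>d$ (or directly to a tail), one reduces uniform convergence of $\textup{D}^\beta\mathcal H[E_d;f]$ to showing that $\|f-\mathcal H[E_d;f]\|_D$ decays faster than any power of $d$. When $f\in C^\infty(D)$ this is immediate: for each fixed $N$, step two gives $Q_{d-2}$ with $\|f-Q_{d-2}\|_D=O(d^{-N})$, so by step one $\|f-\mathcal H[E_d;f]\|_D=O(d^{p+1\log}\cdot d^{-N})\to0$ faster than $d^{2k-N'}$ for every $N'$; combined with the Markov bound and a telescoping/Cauchy argument this yields that $\textup{D}^\beta\mathcal H[E_d;f]$ is uniformly Cauchy, hence converges, and its limit must be $\textup{D}^\beta f$ since $\mathcal H[E_d;f]\to f$ uniformly. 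One technical point to handle cleanly: the approximants $Q_{d-2}$ should be chosen so that $\textup{D}^\beta Q_{d-2}\to\textup{D}^\beta f$ as well (again from Jackson's theorem applied to $\textup{D}^\beta f$), which identifies the limit.

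The main obstacle is the first step — the polynomial growth bound for the Hakopian Lebesgue function $\Lambda_d$ at a Leja section. The formula (\ref{for:basic-hakop}) for $Q_{st}$ is a ratio of a polynomial in $x$ by a product of inner products of \emph{differences} of Leja points, and controlling this uniformly in $s,t$ and in $x\in D$ requires precise lower bounds on such products; for roots of unity these reduce to elementary trigonometric identities (Liang's computation), but for a general Leja section one must feed in the sharp sub-exponential (indeed polynomial, after taking the correct normalization) estimates for Leja sequences established in \cite{jpcpvm} and \cite{calvimanh2}, and carry the dyadic bookkeeping of Theorem \ref{structure-Leja} through the double sum over pairs $(s,t)$. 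Everything after that is routine Jackson/Markov machinery.
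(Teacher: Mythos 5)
Your overall strategy --- the Lebesgue-type inequality of Lemma \ref{lem:lebesgue.hakopian} with $K=D$, a Jackson-type approximation of $f$, then Markov's inequality on the disk and a telescoping argument for the derivatives --- is exactly the route the paper takes, and your treatment of part (2) matches the paper's. The genuine gap is the step you yourself flag as ``the heart of the matter'': you never establish a polynomial bound for $\sum_{0\le s<t\le d-1}\|Q_{st}\|_D$ at a Leja section; you only assert that the structural results on Leja sequences ``should'' give one. In the paper this is Theorem \ref{thm.third.lebesgue.type} ($\|Q_{st}\|_D\le 4d^3$, hence the sum is at most $2d^4(d-1)$), and its proof occupies the last two sections. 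Moreover, the sketch you give would not deliver it: estimating the numerator $h'_{st}\big(\langle(e_s-e_t)^{\bot},x\rangle\big)$ by quantities of the type $\max_{z\in D}|\prod_m(z-e_m)|$ and the denominator from below by products of differences of Leja points, \emph{separately}, produces a ratio of an exponentially large quantity by one that can be exponentially small: the denominator $h'_{st}\big(\langle(e_s-e_t)^{\bot},e_s\rangle\big)=\prod_{m\ne s,t}\langle(e_s-e_t)^{\bot},e_s-e_m\rangle$ carries a factor $|\beta_{st}|^{d-2}=|e_s-e_t|^{d-2}$, which is tiny when $e_s$ and $e_t$ are close. What makes the paper's estimate work is a cancellation between numerator and denominator: the denominator is rewritten (Lemma \ref{lem:deno-estimate}) using the identity $\bigl|\langle\alpha_{st},(e_s-e_m)/(-\beta_{sm})\rangle\bigr|=|e_t-e_m|/2$ as $|\beta_{st}|^{d-4}2^{-(d-2)}\prod_{m\ne t}|e_t-e_m|\prod_{m\ne s}|e_s-e_m|$ and then bounded below by $2^{2r}|\beta_{st}|^{d-4}2^{-(d-2)}$ via Lemma \ref{pro:produc-leja} (which itself rests on the trigonometric inequality of Lemma \ref{lem:ineq+}); the numerator is bounded above by $2^{2r+1}d^3|\beta_{st}|^{d-3}2^{-(d-2)}$ (Lemmas \ref{lem:simplify.factor.triog} and \ref{lem:one.argument.missing}, plus a univariate Markov inequality on the segment $[-|\beta_{st}|,|\beta_{st}|]$ to pass from $h_{st}$ to $h'_{st}$, Lemma \ref{lem:nume-estimate-hakop}). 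The powers of $|\beta_{st}|$, the factor $2^{-(d-2)}$ and the factor $2^{2r}$ then cancel, leaving only $2d^3|\beta_{st}|\le 4d^3$. None of this mechanism appears in your proposal, so the decisive estimate is missing.

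A secondary imprecision in part (1): you take $\|f-Q_{d-2}\|_D=O(d^{-5})$ while the Lebesgue function is only known to be $O(d^5)$, which gives boundedness of the error, not convergence. The paper instead invokes Ragozin's Theorem \ref{ragozin}, which for $f\in C^5(D)$ yields $\|f-Q_{d-2}\|_D\le\delta_d\, d^{-5}$ with $\delta_d\to0$ (thanks to the modulus-of-continuity term); this little-o refinement is exactly what closes the argument at the smoothness level $C^5$.
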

 \begin{corollary}\label{cor.expand.hako} 
For every $f\in C^\infty(D)$, the series
 $$\sum_{d=1}^\infty \sum_{0\leq j_1<j_2<\cdots<j_{d-1}\leq d-1}\int\limits_{[e_0,\dots,e_d]}  \textup{D}^df(\cdot, x-e_{j_1},\dots,x-e_{j_{d-1}}),$$ 
converges to $f$ uniformly on $D$. Moreover, the convergence extends to all derivatives. 
 \end{corollary}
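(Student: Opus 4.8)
The plan is to reduce the statement to Theorem \ref{maintheorem2}, exactly as Corollary \ref{cor.expand.ker} was reduced to Theorem \ref{maintheorem1}, by recognising the partial sums of the series as Hakopian interpolation polynomials at the initial Leja sections. For $d\ge1$ write
$$T_d(x):=\sum_{0\le j_1<\cdots<j_{d-1}\le d-1}\;\int\limits_{[e_0,\dots,e_d]}\textup{D}^{d-1}f(\cdot,\,x-e_{j_1},\dots,x-e_{j_{d-1}})$$
for the $d$-th summand of the series; it is a polynomial of degree at most $d-1$, and $T_1=\int_{[e_0,e_1]}f$. Since every Leja sequence for $D$ lies on the unit circle, no three of its terms are collinear, so each Leja section $E_d$ is in general position and both the Hakopian interpolants $\mathcal H[E_d;f]$ and their characterisation (\ref{for:reduce-hakop}) are available.

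The heart of the argument is a Newton-type recursion for Hakopian interpolation in $\RR^2$: for $p\ge2$ and any $p+1$ points $a_0,\dots,a_p$ in general position,
$$\mathcal H[a_0,\dots,a_p;f]-\mathcal H[a_0,\dots,a_{p-1};f]=\sum_{0\le j_1<\cdots<j_{p-1}\le p-1}\;\int\limits_{[a_0,\dots,a_p]}\textup{D}^{p-1}f(\cdot,\,x-a_{j_1},\dots,x-a_{j_{p-1}}),$$
together with the base case $\mathcal H[a_0,a_1;f]=\int_{[a_0,a_1]}f$, which is immediate from (\ref{for:reduce-hakop}). To prove the recursion I would call its right-hand side $\Delta$, note that $\Delta\in\mathcal P_{p-1}(\RR^2)$, and verify by means of (\ref{for:reduce-hakop}) that $\int_{[a_u,a_v]}\Delta=0$ whenever $0\le u<v\le p-1$ and $\int_{[a_u,a_p]}\Delta=\int_{[a_u,a_p]}\big(f-\mathcal H[a_0,\dots,a_{p-1};f]\big)$ whenever $0\le u\le p-1$. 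Parametrising the edge $[a_u,a_v]$ by $x=a_u+s(a_v-a_u)$ and using the multilinearity of $\textup{D}^{p-1}f$, each summand of $\Delta$ becomes a polynomial in $s$ to be integrated against the simplex functional $\int_{[a_0,\dots,a_p]}$; the key point is that an index $j_\ell$ equal to $u$ or to $v$ turns the corresponding direction vector $x-a_{j_\ell}$ into $s(a_v-a_u)$, respectively $(s-1)(a_v-a_u)$, so the sum over the $j$'s reorganises into telescoping blocks that reduce to elementary integrals over the standard simplex; this is the same mechanism that yields Micchelli's formula for Kergin and the formula of Theorem \ref{thm:formular-hakop}. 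Alternatively one may invoke the known Newton, or divided-difference, form of the Hakopian interpolant.

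Granting the recursion, $T_1+\cdots+T_D$ telescopes to $\mathcal H[e_0,\dots,e_D;f]=\mathcal H[E_{D+1};f]$. Since $f\in C^\infty(D)\subset C^5(D)$, Theorem \ref{maintheorem2}(1) gives $\mathcal H[E_{D+1};f]\to f$ uniformly on $D$ as $D\to\infty$, which is the first assertion. For the second, fix a two-dimensional index $\beta$; differentiation commutes with the finite partial sums, so $\textup{D}^\beta(T_1+\cdots+T_D)=\textup{D}^\beta\mathcal H[E_{D+1};f]$, and the right-hand side tends to $\textup{D}^\beta f$ uniformly on $D$ by Theorem \ref{maintheorem2}(2); hence the series differentiated term by term converges uniformly to $\textup{D}^\beta f$.

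I expect the Newton recursion to be the only genuine obstacle. In contrast to the Kergin case, where passing from $d$ to $d+1$ nodes contributes a single new term, here the $p$ directional-derivative terms appear at once and the cancellation $\int_{[a_u,a_v]}\Delta=0$ for $u,v\le p-1$ is not a pairing of summands but involves all of them jointly; handling this simplex-integration bookkeeping is the delicate step, after which the reduction to Theorem \ref{maintheorem2} and the passage to derivatives are purely formal.
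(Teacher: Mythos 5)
Your proposal is correct and takes essentially the same route as the paper: the paper's proof simply cites the known Newton-type formula for Hakopian interpolation (Hakopian, Goodman) to identify the partial sums of the series with the Hakopian interpolants $\mathcal H[E_d;f]$ and then invokes Theorem \ref{maintheorem2} for convergence of the interpolants and of all their derivatives. Your extra sketch of the incremental recursion via the characterization (\ref{for:reduce-hakop}) is just a (not fully worked-out) way of reproving that formula, and your stated fallback of invoking the known Newton form of the Hakopian interpolant is exactly what the paper does.
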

\begin{proof} Looking at the formula for Hakopian interpolation (see \cite{hakopian, goodman}), the $d$-th partial sum of the series is exactly $\mathcal H[E_d;f]$. \end{proof}
\begin{remark} We denote by $F_p$ the set of functions from $\{0, \dots, d-1\}$ to $\{1,2\}$.  For $\tau\in F_d$, we set $\alpha(\tau)=(a,b)$ with $a$ (resp. $b$) the number of times that $\tau$ takes on the value $1$ (resp. the value $2$) and we write $(x-e)^\tau:= \prod_{i=0}^{d-1} (x-e_i)_{\tau(i)}$,
where $(x-e_i)_{\tau(i)}$ is $x^1-\Re e_i$ (resp. $x^2-\Im e_i$) if $\tau(i)=1$ (resp. $\tau(i)=2$). In particular, $(x-e)^\tau$ is a polynomial of degree $d$. We have
\begin{equation}\label{eqn:total.derive.expan}
\textup{D}^df(\cdot, x-e_0,\dots,x-e_{d-1})=\sum_{\tau \in F_p } \textup{D}^{\alpha(\tau)}f(\cdot) (x-e)^\tau.\end{equation}
The series expansion in Corollary \ref{cor.expand.ker} can be rewriten as 
 \begin{equation}
f(x)=\sum_{d=0}^\infty\sum_{\tau\in F_d} \int\limits_{[e_0,\dots,e_d]}  \textup{D}^{\alpha(\tau)}f(\cdot) (x-e)^\tau.  
\end{equation} 
Thus the polynomials $(x-e)^\tau$ can be regarded as a generalization of the classical Newton polynomials and the above expansion as a multivariate Newton series expansion.  A similar observation could be done with Corollary \ref{cor.expand.hako}.
\end{remark}

 In the rest of this note we always interpolate at Leja sections $E_d$. In order to use the Lebesgue-type inequalities given in (\ref{eqn:lebesgue.kergin}) and (\ref{eqn:lebesgue.hakopian}), we need a kind of Jackson theorem that we now recall.

For $f\in C^k(D)$, we set
$$\|f\|_k:=\sum_{|\beta|\leq k}\|\textup{D}^\beta f \|_{D},$$
$$ \omega(f;\delta):=\sup\{|f(x)-f(y)|: \|x-y\|\leq \delta,\,\, x,y\in D\},\,\,\,\omega(f^{(k)};\delta)=\sum_{|\beta|=k}\omega(\textup{D}^\beta f;\delta),\,\,\delta>0.$$ 
The following theorem, proved in \cite[p. 164]{ragozin}, is due to Ragozin.
\begin{theorem}[Ragozin]\label{ragozin}
Given $f$ in $C^k(D)$ with $k\geq 0$, there exists polynomials $Q_d$ with $\deg Q_d\leq d$ such that
$$\|f-Q_d\|_{D}\leq M(k) d^{-k}\big [d^{-1}\|f\|_k+\omega(f^{(k)};1/d)\big ],$$
where $M(k)$ is a positive constant which depends only on $k$. 
\end{theorem}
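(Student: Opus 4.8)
The statement to prove is Ragozin's Jackson-type theorem (Theorem \ref{ragozin}), and I would not attempt to reprove it from scratch but rather organize a proof along the lines of the classical one-dimensional Jackson machinery transplanted to the disk via convolution with a summability kernel. The plan is as follows. First I would reduce to the case $k=0$: if the estimate holds for continuous functions with the right of modulus-of-continuity, then for $f\in C^k(D)$ one applies it to each $\textup{D}^\beta f$ with $|\beta|=k$ and integrates back $k$ times, using that the space of polynomials of degree $\le d$ is mapped onto polynomials of degree $\le d-k$ by $\textup{D}^\beta$ and, conversely, that a polynomial approximant of $\textup{D}^\beta f$ can be lifted to a polynomial approximant of $f$ by taking antiderivatives (a linear algebra argument on the finite-dimensional space $\mathcal{P}_d(\RR^2)$, paying the price of the $d^{-k}$ factor and a term $d^{-1}\|f\|_k$ coming from the lower-order Taylor-type corrections). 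This is the step that produces the precise shape $d^{-k}[d^{-1}\|f\|_k+\omega(f^{(k)};1/d)]$.

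Next, for the base case $k=0$, I would use a radial positive summability kernel on the disk — concretely, approximate $f$ on $D$ by first extending it to a slightly larger disk (or a neighbourhood, which is available since $f\in C^0(D)$ means $f$ is defined and continuous on a neighbourhood of $D$), then convolve with a trigonometric/Fejér–Jackson-type polynomial kernel in a way adapted to the two real variables. The standard device is: parametrize a neighbourhood, use a Jackson kernel $K_d$ which is a nonnegative polynomial of degree $\le d$ with $\int K_d = 1$ and $\int \|x\| K_d(x)\,dx = O(1/d)$, and set $Q_d = f * K_d$; then $|f(x)-Q_d(x)| \le \int |f(x)-f(x-y)|K_d(y)\,dy \le \int \omega(f;\|y\|)K_d(y)\,dy \le \omega(f;1/d)\int(1+d\|y\|)K_d(y)\,dy = O(\omega(f;1/d))$, using subadditivity of the modulus of continuity. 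The technical care needed is to make $Q_d$ an honest algebraic polynomial of degree $\le d$ on $\RR^2$ (not just a trigonometric polynomial on the boundary), which is why one works with a polynomial kernel in the Cartesian variables rather than a pure Fourier kernel on the circle; this is exactly Ragozin's construction.

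The main obstacle — and the reason I would ultimately just cite \cite{ragozin} rather than include a full proof — is the simultaneous control of \emph{all} derivatives up to order $k$ with the single combined bound, i.e. making the reduction from $k$ to $0$ quantitatively tight. One must exhibit, for each polynomial approximant $R_d$ of degree $\le d-k$ to $\textup{D}^\beta f$ (one for each multi-index $\beta$ of length $k$), a single polynomial $Q_d$ of degree $\le d$ whose $\beta$-th derivative is close to $R_d$ simultaneously for all such $\beta$; the compatibility conditions among the $R_d$'s (mixed partials must agree) hold only approximately, so one needs a stable right inverse of the differentiation operator on $\mathcal{P}_d(\RR^2)$ with operator norm bounded independently of $d$ after the correct rescaling — this is where the $d^{-k}$ gain and the auxiliary $d^{-1}\|f\|_k$ term are genuinely used, and getting the constant $M(k)$ to depend only on $k$ requires the careful bookkeeping carried out in \cite[p.~164]{ragozin}.

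\begin{proof}
See \cite[p.~164]{ragozin}.
\end{proof}
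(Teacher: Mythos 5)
Your proposal ends, as the paper itself does, by citing \cite[p.~164]{ragozin} rather than reproving the result, so it matches the paper's treatment exactly: Theorem \ref{ragozin} is imported from Ragozin's work, not proved here. The surrounding sketch of a Jackson-kernel argument is reasonable background but is not required, since the paper's only ``proof'' of this statement is the same citation you give.
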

In \cite{boscalvi1}, Bos and Calvi  slightly modified a method of Ragozin and used Theorem \ref{ragozin} to prove a simultaneously approximate theorem for $C^2$ functions on $D$ (see \cite[Lemma 4.1]{boscalvi1}). Examining their proof, we see that the proof  easily extends to $C^k$ functions. We state the generalized result without proof.   
\begin{lemma}\label{jackson}
Given $f$ in $C^k(D)$ with $k\geq 1$, there exists a sequence of polynomials $Q_d$ with $\deg Q_d\leq d$ such that
$$\lim_{d\to\infty} d^{k-1}\|f-Q_d\|_{D}=0\quad\text{and}\quad\lim_{d\to\infty} d^{k-1}\|\textup{D}f-\textup{D}Q_d\|_{D}=0.$$
\end{lemma}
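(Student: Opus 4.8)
The plan is to adapt the simultaneous approximation argument of Bos and Calvi (\cite[Lemma 4.1]{boscalvi1}) from $C^2$ to general $C^k$, keeping track of the extra powers of $d$ that become available as the smoothness increases. First I would recall the mechanism of their proof: starting from Ragozin's polynomials $Q_d$ of Theorem \ref{ragozin}, which approximate $f$ itself at the rate $O(d^{-k})\omega(f^{(k)};1/d)$ plus a lower-order term, one does not get any control on $\textup{D}f-\textup{D}Q_d$ for free. The remedy is a Bernstein–Markov/averaging trick: one applies Ragozin's theorem not to $f$ but to the first-order partial derivatives $\partial f/\partial x^1$ and $\partial f/\partial x^2$ (which lie in $C^{k-1}(D)$), obtaining polynomials $R_d^{(1)}, R_d^{(2)}$ of degree $\le d-1$ with $\|\partial f/\partial x^j - R_d^{(j)}\|_D \le M(k-1)d^{-(k-1)}[d^{-1}\|\partial f/\partial x^j\|_{k-1}+\omega((\partial f/\partial x^j)^{(k-1)};1/d)]$, and then one integrates: define $\tilde Q_d(x) := f(0) + \int_0^1 \langle (R_d^{(1)}(tx), R_d^{(2)}(tx)), x\rangle\, dt$ (or the analogous path-integral construction used in \cite{boscalvi1}), which is a polynomial of degree $\le d$ whose gradient is close to $\textup{D}f$ at the rate just displayed. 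A small correction step then reconciles $\tilde Q_d$ with a polynomial that simultaneously approximates $f$.

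The key steps, in order, are: (i) invoke Theorem \ref{ragozin} for each of $\partial f/\partial x^1, \partial f/\partial x^2 \in C^{k-1}(D)$ to get degree-$(d-1)$ polynomial approximants with error $o(d^{-(k-1)})$ — here the crucial quantitative input is that $\omega(g;1/d)\to 0$ as $d\to\infty$ for any fixed continuous $g$, so the bracketed quantity in Ragozin's estimate is $o(1)$, making the whole bound $o(d^{-(k-1)})$; (ii) build $Q_d$ by integrating this vector field along segments from a fixed basepoint, using convexity of $D$ so the path stays in $D$; this gives $\deg Q_d \le d$ and $\|\textup{D}f - \textup{D}Q_d\|_D = o(d^{-(k-1)})$, hence certainly $d^{k-1}\|\textup{D}f-\textup{D}Q_d\|_D \to 0$; (iii) estimate $\|f - Q_d\|_D$ directly from the fundamental theorem of calculus along the same segments: $f(x) - Q_d(x) = \int_0^1 \langle \textup{D}f(tx) - \textup{D}Q_d(tx), x\rangle\, dt$, so $\|f-Q_d\|_D \le \textup{diam}(D)\,\|\textup{D}f - \textup{D}Q_d\|_D = o(d^{-(k-1)})$, and a fortiori $d^{k-1}\|f-Q_d\|_D \to 0$. (One must be slightly careful that the basepoint value is matched; adding the constant $f(\text{basepoint}) - Q_d(\text{basepoint})$ to $Q_d$ costs nothing in the derivative estimate and only helps the $C^0$ estimate.)

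The main obstacle — or rather the only point demanding care — is step (ii): verifying that the path-integral of a polynomial vector field is genuinely a polynomial of the claimed degree, and that its partial derivatives are controlled by the partial derivatives of the integrand uniformly on $D$. Since integration along $t\in[0,1]$ against a polynomial in $tx$ produces, after expanding in monomials, a polynomial in $x$ of the same total degree with coefficients rescaled by harmless factors $1/(j+1)$, the degree bound is automatic; and differentiating under the integral sign is justified because everything is smooth, giving $\partial Q_d/\partial x^i$ as an explicit integral of $R_d^{(i)}$ plus terms involving $t\,\partial R_d^{(j)}/\partial x^i$, which one reassembles using the symmetry $\partial^2 f/\partial x^i\partial x^j = \partial^2 f/\partial x^j\partial x^i$ exactly as in the classical Poincaré-lemma computation. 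Because we only need \emph{convergence to zero} of $d^{k-1}$ times the errors, and the errors are $o(d^{-(k-1)})$ by the remark in step (i), no delicate bookkeeping of constants is required — the argument is morally identical to \cite[Lemma 4.1]{boscalvi1} with $k$ in place of $2$, which is why we state the lemma without a detailed proof.
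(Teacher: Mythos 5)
There is a genuine gap, and it sits exactly at the step you flag as ``the only point demanding care,'' namely step (ii). The Poincar\'e-lemma computation you invoke requires the vector field being integrated to be curl-free. That is true of $\nabla f$, but not of the field $(R_d^{(1)},R_d^{(2)})$, whose components are chosen \emph{independently} as Ragozin approximants of $\partial f/\partial x^1$ and $\partial f/\partial x^2$; nothing forces $\partial_2 R_d^{(1)}=\partial_1 R_d^{(2)}$. Computing the gradient of $\tilde Q_d(x)=f(0)+\int_0^1\langle (R_d^{(1)}(tx),R_d^{(2)}(tx)),x\rangle\,dt$ and comparing with $\partial_i f(x)=\int_0^1\frac{d}{dt}\bigl[t\,\partial_i f(tx)\bigr]dt$ gives
\begin{equation*}
\partial_i\tilde Q_d(x)-\partial_i f(x)=\int_0^1\bigl(R_d^{(i)}-\partial_i f\bigr)(tx)\,dt+\sum_{j}x^j\int_0^1 t\,\partial_i\bigl(R_d^{(j)}-\partial_j f\bigr)(tx)\,dt .
\end{equation*}
The first term is indeed $o(d^{-(k-1)})$ by Ragozin, but the second involves $\partial_i\bigl(R_d^{(j)}-\partial_j f\bigr)$, i.e.\ how well the \emph{derivatives} of $R_d^{(j)}$ approximate the second derivatives of $f$. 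Ragozin's theorem gives no control on this; the symmetry of the second derivatives of $f$ cannot be transferred to the approximants. Trying to recover such control by Markov's inequality (via a telescoping argument) costs a factor $d^2$, and applying the lemma itself inductively to $\partial_j f\in C^{k-1}$ only yields $o(d^{-(k-2)})$ for that term --- in either case one power of $d$ short of the $o(d^{-(k-1)})$ needed, and for $k=1$ the term cannot be made small at all. So steps (ii) and (iii) as written do not establish the claimed rates; the difficulty of simultaneous approximation is precisely that one cannot approximate the partial derivatives separately and then integrate.

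For comparison, the paper does not prove the lemma at all: it cites Bos--Calvi \cite[Lemma 4.1]{boscalvi1} and observes that their proof, a slight modification of Ragozin's own \emph{method} (a linear smoothing/convolution-type construction whose error analysis can be applied simultaneously to $f\in C^k$ and to $\mathrm{D}f\in C^{k-1}$, the latter giving the $o(d^{-(k-1)})$ derivative estimate for the \emph{same} polynomial $Q_d$), extends verbatim from $k=2$ to general $k$. If you want a self-contained argument you should follow that route --- work with Ragozin's operator and show it approximates $f$ and $\mathrm{D}f$ at the rates dictated by their respective smoothness --- rather than integrating independently constructed approximants of the two partial derivatives.
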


\noindent
Next, we investigate the growth of Lebesgue-type constants  
$$\sum_{j=0}^{d-1}\|P_j\|_{D},\quad \sum_{0\leq s<t\leq d-1}\|P_{st}\|_{D}\quad\text{and}\quad \sum_{0\leq s<t\leq d-1}\|Q_{st}\|_{D}.$$
Looking at the formula for $P_j$ in Theorem \ref{thm:boscalvi}, we see that $\sum_{j=0}^{d-1}\|P_j\|_{D}$ is dominated by the Lebesgue constant $\Delta(E_d)$ for Lagrange interpolation corresponding $d$ complex nodes $e_j$, $0\leq j\leq d-1$, over $D$. But \cite[Corollary 7]{jpcpvm} tells us that $\Delta(E_d)=O(d\log d)$ as $d\to\infty$. It gives us the estimate
\begin{equation}\label{eqn:first.lebesgue.type}
\sum_{j=0}^{d-1}\|P_j\|_{D}=O(d\log d)\text{ as }d\to \infty.
\end{equation}     
The estimates for the remaining Lebesgue-type constants are simple consequences of the following two theorems that are proved in the last section. Here, in the formulas for $h_{st}$ and $h'_{st}$, we take $a_j=e_j$ so that
\begin{equation}\label{eqn:hst.new}
 h_{st}(w)=\prod_{m=0,m\ne s}^{d-1}\big (w-\langle (e_s-e_t)^{\bot},e_m\rangle \big), \quad w\in\RR, 
\end{equation}
\begin{equation}\label{eqn:deri.hst.new}
h'_{st}\big(\langle (e_s-e_t)^{\bot},e_s\rangle \big)=\prod_{m=0,m\ne s,t}^{d-1} \langle (e_s-e_t)^{\bot},(e_s-e_m)\rangle.
\end{equation}
\begin{theorem}\label{thm.second.lebesgue.type}
We have $\|P_{st}\|_{D}\leq 2d$ for all $d\geq 2$ and $0\leq s<t\leq d-1$, where
$$P_{st}(x)=\frac{h_{st}\big( \langle (e_s-e_t)^{\bot},x\rangle\big)}{|e_s-e_t|^2h'_{st}\big( \langle (e_s-e_t)^{\bot},e_s\rangle\big)},\quad\quad x\in D.$$
\end{theorem}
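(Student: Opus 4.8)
The plan is to bound $|P_{st}(x)|$ by estimating numerator and denominator separately, using the Leja property to control the relevant products of differences. Write $c_s:=\langle (e_s-e_t)^{\bot},e_s\rangle$ and, for $x\in D$, $w:=\langle (e_s-e_t)^{\bot},x\rangle$. Since $(e_s-e_t)^{\bot}=i(e_s-e_t)$ (identifying $\RR^2$ with $\CC$), we have $\langle (e_s-e_t)^{\bot},e_m\rangle = \Re\big(\overline{i(e_s-e_t)}\,e_m\big) = \Im\big(\overline{(e_s-e_t)}\,e_m\big)$, so that $w-\langle (e_s-e_t)^{\bot},e_m\rangle = \Im\big(\overline{(e_s-e_t)}(x-e_m)\big)$ and $c_s-\langle (e_s-e_t)^{\bot},e_m\rangle = \Im\big(\overline{(e_s-e_t)}(e_s-e_m)\big)$. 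In particular $|w-\langle(e_s-e_t)^{\bot},e_m\rangle|\le |e_s-e_t|\cdot|x-e_m|$ and, crucially, the denominator factor $c_s-\langle(e_s-e_t)^{\bot},e_m\rangle$ equals the imaginary part of $\overline{(e_s-e_t)}(e_s-e_m)$ rather than its modulus, so a direct term-by-term comparison loses too much. The key observation is that, geometrically, $\overline{(e_s-e_t)}(e_s-e_m)$ has the \emph{same} imaginary part as $\overline{(e_s-e_t)}(x-e_m)$ when $x$ lies on the line through $e_s$ parallel to $e_s-e_t$; the point is to replace $x$ by a suitable point and compare products along that line.

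Concretely, I would fix $x\in D$ and consider the univariate polynomial $g(w) := h_{st}(w)$ restricted to the real line; the numerator is $g\big(\langle(e_s-e_t)^\perp,x\rangle\big)$ and the denominator involves $g'(c_s)$ together with the normalizing factor $|e_s-e_t|^2$. Since $g(c_t)=0$ (by Lemma~\ref{lem:h.st}(1), taking $v=t$ — note $\langle(e_s-e_t)^\perp,e_t\rangle=\langle(e_s-e_t)^\perp,e_s\rangle=c_s$... actually one must check which roots coincide), and $g$ has degree $d-1$ with all roots among the values $\langle(e_s-e_t)^\perp,e_m\rangle$, the ratio $g(w)/g'(c_s)$ is a Lagrange-type quotient. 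I would then use the identity (from the third case in the proof of Theorem~\ref{thm:formular-hakop}, or directly) that relates $P_{st}$ to an integral of $Q_{st}$ or to a difference quotient of $h_{st}$, and reduce the estimate to bounding a Lagrange fundamental polynomial for the nodes $\{\Im(\overline{(e_s-e_t)}e_m): m\ne s\}$ on the image of $D$ under $x\mapsto \Im(\overline{(e_s-e_t)}x)$, which is an interval of length $2|e_s-e_t|$.

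The heart of the matter is the following: the nodes $\zeta_m := \Im\big(\overline{(e_s-e_t)}e_m\big)/|e_s-e_t|$, $m=0,\dots,d-1$, $m\ne s$, are the imaginary parts of the points $\overline{u}e_m$ where $u=(e_s-e_t)/|e_s-e_t|$, i.e. projections of the rotated Leja section onto a line; and $\zeta_s := \Im(\overline{u}e_s)$ is one endpoint-type value. I expect the main obstacle to be showing that the product $\prod_{m\ne s,t}|\zeta_s-\zeta_m|$ (which, after scaling, is $|e_s-e_t|^{-(d-2)}|h'_{st}(c_s)|$ times appropriate factors) is not too small compared with $\prod_{m\ne s}|\zeta(x)-\zeta_m|$ for $x\in D$, where $\zeta(x)=\Im(\overline u x)$. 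The clean way is to observe that $\overline u e_s$ and $\overline u e_t$ differ only in their real parts (same imaginary part $\zeta_s=\zeta_t$ after scaling? — one must verify $\Im(\overline u(e_s-e_t))=\Im(|e_s-e_t|)=0$, which holds), so that $h_{st}$, viewed via the substitution, is essentially $\prod_{m\ne s}(\,\cdot\, - \zeta_m)$ and $h'_{st}(c_s)=\prod_{m\ne s,t}(\zeta_s-\zeta_m)$ up to the sign and the removed factor $(\zeta_s-\zeta_t)=0$-type cancellation. Then the Leja maximality $|\prod_{j<d}(e_d-e_j)|=\max_{z\in D}|\prod_{j<d}(z-e_j)|$ and the structural Theorem~\ref{structure-Leja} (roots of unity at dyadic steps) should give that each remaining factor $|\zeta_s-\zeta_m|$ in the denominator is comparable, in product, to the corresponding numerator product up to a factor linear in $d$; carrying out this comparison carefully — likely by pairing $e_s$ with its ``mirror'' and using that consecutive Leja points cannot be too close in the projected coordinate except in a controlled way — is where the factor $2d$ will emerge, and is the step I would expect to require the most care.
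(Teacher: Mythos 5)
Your overall strategy --- bounding the numerator and the denominator of $P_{st}$ separately after projecting the nodes in the direction $(e_s-e_t)^{\bot}$ --- is indeed the shape of the paper's argument, and your preliminary observations are correct (for instance that $\Im(\overline{u}(e_s-e_t))=0$, so $e_s$ and $e_t$ project to the same value, and that the image of $D$ is an interval of length $2|e_s-e_t|$). But the two quantitative steps that actually produce the bound $2d$ are left as hopes, and the heuristics you offer for them would not suffice. For the denominator, Leja extremality and ``pairing $e_s$ with its mirror'' do not control $\prod_{m\ne s,t}|\langle(e_s-e_t)^{\bot},e_s-e_m\rangle|$ from below: projected nodes can be extremely close even when the points themselves are well separated (inside each dyadic block a full set of rotated roots of unity projects onto values that coincide in pairs). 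What the paper uses instead is the exact identity $|\langle(e_s-e_t)^{\bot},e_s-e_m\rangle|=\tfrac{1}{2}|e_s-e_t|\,|e_s-e_m|\,|e_t-e_m|$, which converts the denominator into $\frac{|\beta_{st}|^{d-4}}{2^{d-2}}\prod_{m\ne s}|e_s-e_m|\prod_{m\ne t}|e_t-e_m|$ (Lemma \ref{lem:deno-estimate}), and then the genuinely nontrivial lower bound $\prod_{m\ne s}|e_s-e_m|\ge 2^{r}$ of Lemma \ref{pro:produc-leja}, whose proof requires the block structure of Theorem \ref{structure-Leja} together with the trigonometric inequality of Lemma \ref{lem:ineq+}; nothing in your sketch supplies or replaces these ingredients.

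For the numerator, the reduction to a Lagrange-type quotient on an interval with the term-by-term bound $|\langle(e_s-e_t)^{\bot},x-e_m\rangle|\le 2|e_s-e_t|$ is off by an exponential factor: the full denominator $|e_s-e_t|^{2}\,|h'_{st}(\langle(e_s-e_t)^{\bot},e_s\rangle)|$ is only bounded below by roughly $2^{2r}|\beta_{st}|^{d-2}/2^{d-2}$, so one must show that the numerator carries the same tiny factor $2^{-(d-2)}$. In the paper this comes from writing $|h_{st}(\langle(e_s-e_t)^{\bot},x\rangle)|=|\beta_{st}|^{d-1}\prod_{m\ne s}|\cos\varphi-\cos(\theta_m-\tfrac{\theta_s+\theta_t}{2})|$ and evaluating the product blockwise through the Chebyshev identity (Lemma \ref{lem:simplify.factor.triog}) and the one-node-removed estimate (Lemma \ref{lem:one.argument.missing}), which is precisely where the single factor $d$ (via $2^{n_{j+1}}\le d$) appears and where the $|\sin\tfrac{\theta_s-\theta_t}{2}|$ in the denominator of that estimate cancels against $|\beta_{st}|$. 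Since you explicitly defer exactly this comparison (``the step I would expect to require the most care''), the proposal as it stands is a plausible plan rather than a proof: the projection identity, the blockwise Chebyshev estimates, and the lower bound of Lemma \ref{pro:produc-leja} constitute the mathematical content of the theorem and are missing.
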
 
\begin{theorem}\label{thm.third.lebesgue.type}
We have $\|Q_{st}\|_{D}\leq 4d^3$ for all $d\geq 2$ and $0\leq s<t\leq d-1$, where
$$Q_{st}(x)=\frac{h'_{st}\big( \langle (e_s-e_t)^{\bot},x\rangle\big)}{h'_{st}\big( \langle (e_s-e_t)^{\bot},e_s\rangle\big)}, \quad\quad x\in D.$$
\end{theorem}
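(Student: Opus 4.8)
The plan is to estimate $\|Q_{st}\|_D$ by separately bounding the numerator $h'_{st}(\langle(e_s-e_t)^\bot,x\rangle)$ for $x\in D$ and bounding from below the denominator $h'_{st}(\langle(e_s-e_t)^\bot,e_s\rangle)$, which by \eqref{eqn:deri.hst.new} equals $\prod_{m\ne s,t}\langle(e_s-e_t)^\bot,(e_s-e_m)\rangle$. The first observation I would make is a reduction to the complex picture: writing $e_s-e_t$ as a complex number, $(e_s-e_t)^\bot = i(e_s-e_t)$, so that $\langle(e_s-e_t)^\bot, y\rangle = \langle i(e_s-e_t), y\rangle = \Im\big(\overline{(e_s-e_t)}\,y\big)$ after identifying $\RR^2$ with $\CC$. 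Hence $\langle(e_s-e_t)^\bot,(e_s-e_m)\rangle = \Im\big(\overline{(e_s-e_t)}(e_s-e_m)\big)$, and $|{\langle(e_s-e_t)^\bot,(e_s-e_m)\rangle}| \le |e_s-e_t|\,|e_s-e_m| \le 2|e_s-e_t|$ since all nodes lie in $D$. This already gives an upper bound $|h'_{st}(w)|\le (d-1)(4)^{d-2}|e_s-e_t|^{d-2}$ for $|w|$ of size at most $2|e_s-e_t|$, but that is far too crude; the point of the theorem is that the ratio is polynomially bounded, which forces us to exploit the Leja structure in the denominator.

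The heart of the argument is a lower bound for $\big|\prod_{m\ne s,t}\langle(e_s-e_t)^\bot,(e_s-e_m)\rangle\big|$. Geometrically, $\langle(e_s-e_t)^\bot,(e_s-e_m)\rangle$ is (up to the factor $|e_s-e_t|$) the signed distance from $e_m$ to the line through $e_s$ and $e_t$; so the denominator measures how far the nodes stay away from that line. I would compare this product to $\big|\prod_{m\ne s}(e_s-e_m)\big|$, which is precisely the quantity controlled by the Leja property: since $E$ is a Leja sequence, $\big|\prod_{m=0}^{d-1}(e_d-e_m)\big| = \max_{z\in D}\big|\prod_{m=0}^{d-1}(z-e_m)\big|$, and the results of \cite{biacal, jpcpvm, calvimanh2} on Leja sections give that $\big|\prod_{m\ne s}(e_s-e_m)\big|$ is bounded below by something like $c^d$ or even a polynomial-times-constant — more precisely, the relevant estimate is that the ratio of the full product $\prod_{m\ne j}(e_j-e_m)$ to the extremal value is bounded, together with a lower bound for that extremal value. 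The key algebraic identity to write down is that, projecting orthogonally onto the direction $(e_s-e_t)^\bot$, the factors $\langle(e_s-e_t)^\bot,(e_s-e_m)\rangle$ are the images of $e_s-e_m$ under a linear map; combined with $\langle(e_s-e_t)^\bot,(e_s-e_m)\rangle = \Im(\overline{(e_s-e_t)}(e_s-e_m))$ and the fact that $|\Im(\zeta)|$ and $|\zeta|$ differ by the cosine of an angle, I would need a lower bound on how many of the nodes can be nearly aligned with $e_s-e_t$.

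Concretely, I expect the argument to run as follows. First, using the structure theorem (Theorem \ref{structure-Leja}), reduce to the case where the nodes are, at each dyadic level, roots of unity scaled and rotated; on roots of unity the quantities $\langle(e_s-e_t)^\bot,(e_s-e_m)\rangle$ can be computed trigonometrically and the product estimated directly, giving the bound $\le 4d^3$ in that model case. Then handle a general Leja section by the recursive decomposition $E_{2^{n+1}} = (E_{2^n}, \rho E^{(1)}_{2^n})$, showing the bound is stable under the recursion with only polynomial loss, and finally interpolate between dyadic levels $2^n \le d < 2^{n+1}$. The main obstacle — and the step I would spend the most care on — is the lower bound for the denominator: one must rule out that the line through $e_s$ and $e_t$ passes too close to too many other nodes simultaneously, and quantify this. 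I anticipate this requires combining the explicit root-of-unity computations with the perturbation control on $E^{(1)}$ from \cite{calvimanh2}, and it is where the exponent $3$ (rather than $1$) in $4d^3$ will come from: roughly one factor of $d$ from the degree of $h'_{st}$, one from the worst-case near-alignment, and one from summing/uniformizing over the dyadic scales.
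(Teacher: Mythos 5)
There is a genuine gap: what you have written is a strategy, not a proof, and the two steps you defer are exactly the ones that carry all the difficulty. First, for the denominator, your worry about ``how many nodes can be nearly aligned with $e_s-e_t$'' is resolved in the paper not by any alignment count but by an exact identity: writing $(e_s-e_t)^{\bot}=\alpha_{st}\beta_{st}$ with $\alpha_{st}=e^{i(\theta_s+\theta_t)/2}$, $\beta_{st}=-2\sin\frac{\theta_s-\theta_t}{2}$, one has $\bigl|\bigl\langle \alpha_{st},\tfrac{e_s-e_m}{-\beta_{sm}}\bigr\rangle\bigr|=\tfrac{|e_t-e_m|}{2}$, so the product in \eqref{eqn:deri.hst.new} collapses to $\frac{|\beta_{st}|^{d-4}}{2^{d-2}}\prod_{m\ne t}|e_t-e_m|\prod_{m\ne s}|e_s-e_m|$, and the Leja structure enters only through the lower bound $\prod_{m\ne s}|e_s-e_m|\geq 2^{r}$ (Lemma \ref{pro:produc-leja}), which is itself a nontrivial trigonometric argument, not a quotable ``ratio to the extremal value is bounded'' statement from the literature. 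Your proposal neither finds this identity nor supplies a substitute for it, and without it the near-alignment problem you flag is real and unquantified. Second, and decisively, the bound $4d^3$ is only possible because the upper bound for the numerator and the lower bound for the denominator carry matching powers of $|\beta_{st}|=|e_s-e_t|$ (namely $|\beta_{st}|^{d-3}$ against $|\beta_{st}|^{d-4}$) and matching factors $2^{-(d-2)}$, which then cancel in the ratio; $|e_s-e_t|$ can be of order $1/d$, so any estimate that does not track these exponential factors exactly (as your crude $(d-1)4^{d-2}|e_s-e_t|^{d-2}$ attempt illustrates) cannot close. You give no mechanism producing an upper bound for $h'_{st}$ on $D$ proportional to $|\beta_{st}|^{d-3}$.

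Concerning the route you sketch for the numerator: the paper does not prove the bound for dyadic $d$ and then ``interpolate between dyadic levels'' or control $E^{(1)}$ as a perturbation. It bounds $h_{st}$ directly for arbitrary $d$ by splitting the product over the blocks $E(d_j:d_{j+1}-1)$ of the decomposition $d=2^{n_0}+\cdots+2^{n_r}$, each block being a rotated complete set of roots of unity, and evaluating each block by Chebyshev-polynomial identities (Lemmas \ref{lem:simplify.factor.triog} and \ref{lem:one.argument.missing}); the single factor $1/|\sin\frac{\theta_s-\theta_t}{2}|$ coming from the block containing $e_s$ is what absorbs one power of $|\beta_{st}|$. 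The passage from $h_{st}$ to $h'_{st}$ is then done in one line by the classical Markov inequality on the segment $[-|\beta_{st}|,|\beta_{st}|]$, which is where the factor $(d-1)^2$ — and hence the exponent $3$ — actually comes from (Lemma \ref{lem:nume-estimate-hakop}), not from ``summing over dyadic scales.'' So beyond the overall plan of bounding numerator and denominator separately, the essential ideas (the collapse of the denominator to Leja products, the lower bound $2^r$, the block-wise Chebyshev estimates with the correct $|\beta_{st}|$ dependence, and the Markov step) are missing from your proposal, and the recursive/perturbative scheme you propose in their place is not shown to produce bounds of the required sharp form.
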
 
Next, we note that $D$ satisfies a Markov inequality, that is, 
\begin{equation}\label{eqn:markov.D}
\max\left\{\left\|\frac{\partial p}{\partial x^1}\right\|_D, \left\|\frac{\partial p}{\partial x^2}\right\|_D\right\}\leq (\deg p)^2 \|p\|_D,\quad p\in\mathcal P(\RR^2),
\end{equation}
see \cite{sarantopoulos}. Repeated application of (\ref{eqn:markov.D}) yields estimates for each partial derivatives,
\begin{equation}\label{ineq.markov}
\|\textup{D}^\beta p\|_D\leq (\deg p)^{2|\beta|}\|p\|_D, \quad p\in\mathcal P(\RR^2).
\end{equation}
\begin{proof}[Proof of Theorem \ref{maintheorem1}]
Using Lemma \ref{jackson} for $f\in C^k(D)$ we can find a sequence of polynomials $Q_{d-1}\in\mathcal{P}_{d-1}(\RR^2)$, $d\in \mathbb{N}^\star$, and a sequence of positive numbers $(\epsilon_n:n\in\NN)$ that converges to 0 such that  
\begin{equation}
\max\{\|f-Q_{d-1}\|_{D}, \|\textup{D}f-\textup{D}Q_{d-1}\|_{D}\}\leq \frac{\epsilon_{d}}{d^{k-1}}, \quad d\geq 1. 
\end{equation}
 But Theorem \ref{thm.second.lebesgue.type} gives $\sum_{0\leq s<t\leq d-1}\|P_{st}\|_{D}\leq d^2(d-1)$ and (\ref{eqn:first.lebesgue.type}) gives $1+ \sum_{j=0}^{d-1}\|P_j\|_{D}=O(d\log d)$. It follows from (\ref{eqn:lebesgue.kergin}) that
\begin{equation}\label{error.ker}
\|f-\mathcal K[E_d;f]\|_D\leq \Big(O(d\log d)+2d^2(d-1) \Big)\frac{\epsilon_d}{d^{k-1}}\leq\frac{M\epsilon_d}{d^{k-4}}, 
 \end{equation}
where $M$ is a constant. The right hand side of (\ref{error.ker}) tends to 0 as $d\to\infty$ when $f\in C^4(D)$, i.e., $k=4$. This follows the first assertion. To prove the second one with the hypothesis that $f\in C^\infty(D)$, we first observe that 
\begin{equation}
\|\mathcal K[E_{n+1};f]-\mathcal K[E_{n};f] \|_D\leq \|f-\mathcal K[E_{n+1};f]\|_D+\|f-\mathcal K[E_{n};f] \|_D\leq \frac{M(\epsilon_{n+1}+\epsilon_{n})}{n^{k-4}}. 
\end{equation}
Applying Markov's inequality in (\ref{ineq.markov}) for $p=\mathcal K[E_{n+1};f]-\mathcal K[E_{n};f]\in \mathcal P_{n+1}(\RR^2)$ we obtain
\begin{equation}
\Big\|\textup{D}^{\beta}\Big(\mathcal K[E_{n+1};f]-\mathcal K[E_{n};f] \Big)\Big\|_D\leq \frac{M(\epsilon_{n+1}+\epsilon_n)(n+1)^{2|\beta|}}{n^{k-4}}, \quad \beta\in\NN^2. 
\end{equation}  
Now, we choose $k=k(\beta)=2|\beta|+6$. Then the series 
$$\sum_{n=1}^{\infty}\frac{M(\epsilon_{n+1}+\epsilon_n)(n+1)^{2|\beta|}}{n^{k-4}}$$ 
converges. This follows the uniform convergence on $D$ of the series 
$$\textup{D}^{\beta}\Big(\mathcal K[E_1;f]\Big)+\sum_{n=1}^\infty \textup{D}^{\beta}\Big(\mathcal K[E_{n+1};f]-\mathcal K[E_{n};f] \Big).$$
Hence $\textup{D}^{\beta}\Big(\mathcal K[E_d;f]\Big)$ converges uniformly on $D$ as $d\to\infty$, for every $\beta\in\NN^2$. A classical reasoning show that if  $\mathcal K[E_d;f]$ converges uniformly to $f$ on $D$ and $\textup{D}^{\beta}\Big(\mathcal K[E_d;f]\Big)$ converges uniformly on $D$ for every $\beta\in\NN^2$, then
$$\textup{D}^{\beta}\Big(\mathcal K[E_d;f]\Big)\to \textup{D}^\beta f,\quad\text{uniformly on }D,\quad \text{for every }\beta\in\NN^2.$$  
\end{proof}      
\begin{proof}[Proof of Theorem \ref{maintheorem2}]
Using Theorem \ref{ragozin} for $f\in C^k(D)$ we can find a sequence of polynomials $Q_{d-2}\in\mathcal P_{d-2}(\RR^2)$, $d\geq 2$, and a sequences of positive numbers $(\delta_n:n\in\NN)$ that converges to 0 such that  
$$ \|f-Q_{d-2}\|_{D}\leq \frac{\delta_d}{d^{k}},\quad d\geq 2.$$ 
From Theorem \ref{thm.third.lebesgue.type} we have $ \sum_{0\leq s<t\leq d-1}\|Q_{st}\|_{D}\leq 2d^4(d-1)<2d^5-1$. Hence, in view of Lemma \ref{lem:lebesgue.hakopian}, we get
$$\|f-\mathcal H[E_d;f]\|_D\leq \big (1+\sum_{0\leq s<t\leq d-1}\|Q_{st}\|_{D}\big)\|f-Q_{d-2}\|_{D}\leq \frac{2\delta_d}{d^{k-5}}.$$
This estimate is the same as (\ref{error.ker}).  
The conclusions of the theorem now follow by repeating the arguments in the proof of Theorem \ref{maintheorem1}.
\end{proof} 
\section{Further properties of Leja sequences for the disk}
\subsection{Decomposition of Leja sections}
Let $E=(e_n:n\in\NN)$ be a Leja sequence for $D$. As observed in \cite{calvimanh2}, repeated applications of the rule in Theorem \ref{structure-Leja} show that if $d=2^{n_0}+2^{n_1}+\dots+2^{n_r}$ with $n_0>n_1>\dots > n_r\geq 0$, then 
\begin{align} E_{d}&=(E_{2^{n_0}}\, ,\, \rho_0 E^{(1)}_{d-2^{n_0}})=(E_{2^{n_0}}\, ,\, \rho_0 E^{(1)}_{2^{n_1}}\, , \, \rho_1\rho_0 E^{(2)}_{d-2^{n_0}-2^{n_1}})
\\&=\dots =(E_{2^{n_0}}\, ,\, \rho_0 E^{(1)}_{2^{n_1}}\, , \, \rho_1\rho_0 E^{(2)}_{2^{n_2}}, \dots ,\, \rho_{r-1}\dots\rho_1\rho_0 E^{(r)}_{2^{n_r}}), \label{eqn:define.rho}\end{align}
where each $E^{(j)}_{2^{n_j}}$ consists of a complete set of the $2^{n_j}$-roots of unity, arranged in a certain order, and $\rho_j$ satisfies $\rho_j^{2^{n_j}}=-1$ for all $0\leq j\leq r-1$.

 For a sequence of complex numbers $Z=(z_k: k\in\NN )$ we define $Z(j:k):=(z_j,z_{j+1},\ldots,z_k)$, of course, $Z(0:k-1)=Z_{k}$.  
For $d=2^{n_0}+2^{n_1}+\dots+2^{n_r}$ with $n_0>n_1>\dots > n_r\geq 0$, let us set 
\begin{equation}\label{eqn:set.dj}
d_{-1}=0,\quad d_j=2^{n_0}+\cdots+2^{n_j},\quad \text{for }0\leq j\leq r.
\end{equation} 
With this notation, in view of (\ref{eqn:define.rho}), we have 
\begin{equation}\label{eqn.decom.leja}
E(d_{-1}:d_0-1)=E_{2^{n_0}}\quad\text{and}\quad E(d_j:d_{j+1}-1)=\rho_j\cdots\rho_0 E^{(j+1)}_{2^{n_{j+1}}},\,\,\, 0\leq j\leq r-1.
\end{equation}
 From now on, we always denote $\theta_n:=\arg e_n$ for $n\geq 0$ and $\varphi_j:=\arg \rho_j$ for $0\leq j\leq r-1$. Since $\rho_j^{2^{n_j}}=-1$ we may put $\varphi_j=\frac{(2q_j+1)\pi}{2^{n_j}}$, $q_j\in\NN$. The following lemma is similar to \cite[Lemma 3]{jpcpvm}.  

\begin{lemma}\label{lem:simplify.factor}
Let $E=(e_n:n\in\NN)$ be a Leja sequence for $D$ and $d=2^{n_0}+2^{n_1}+\dots+2^{n_r}$ with $n_0>n_1>\dots > n_r\geq 0$. Then 
\begin{enumerate}
\item $\prod_{m=d_{-1}}^{d_0-1}|z-e_m|=|z^{2^{n_0}}-1|$;
\item $ \prod_{m=d_j}^{d_{j+1}-1} |z-e_m|=|(z\rho_0^{-1}\cdots\rho_j^{-1})^{2^{n_{j+1}}}-1|$,\quad $0\leq j\leq r-1$;
\item $\prod_{m=d_j,m\ne k}^{d_{j+1}-1} |e_k-e_m|=2^{n_{j+1}}$,\quad $d_j\leq k\leq d_{j+1}-1$, \quad $-1\leq j\leq r-1$.
\end{enumerate}
\end{lemma}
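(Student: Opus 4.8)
The plan is to exploit the explicit decomposition of the Leja section recorded in \eqref{eqn.decom.leja} together with the elementary identity $\prod_{\zeta^k=1}(z-\zeta)=z^k-1$. For assertion (1), the block $E(d_{-1}:d_0-1)=E_{2^{n_0}}$ is, by Theorem \ref{structure-Leja}(1), exactly the set of $2^{n_0}$-th roots of unity (in some order); hence $\prod_{m=d_{-1}}^{d_0-1}(z-e_m)=z^{2^{n_0}}-1$, and taking absolute values gives the claim. For assertion (2), \eqref{eqn.decom.leja} says that the block $E(d_j:d_{j+1}-1)$ equals $\rho_j\cdots\rho_0\,E^{(j+1)}_{2^{n_{j+1}}}$, where $E^{(j+1)}_{2^{n_{j+1}}}$ is a complete set of $2^{n_{j+1}}$-th roots of unity. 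Writing $\sigma:=\rho_0\cdots\rho_j$, the factors $e_m$ in this block are exactly the numbers $\sigma\zeta$ with $\zeta^{2^{n_{j+1}}}=1$, so
$$\prod_{m=d_j}^{d_{j+1}-1}(z-e_m)=\prod_{\zeta^{2^{n_{j+1}}}=1}(z-\sigma\zeta)=\sigma^{2^{n_{j+1}}}\prod_{\zeta^{2^{n_{j+1}}}=1}\big(z\sigma^{-1}-\zeta\big)=\sigma^{2^{n_{j+1}}}\big((z\sigma^{-1})^{2^{n_{j+1}}}-1\big).$$
Since $|\sigma|=1$ (each $\rho_i$ has modulus $1$ because $\rho_i^{2^{n_i}}=-1$), passing to moduli yields $\prod_{m=d_j}^{d_{j+1}-1}|z-e_m|=|(z\rho_0^{-1}\cdots\rho_j^{-1})^{2^{n_{j+1}}}-1|$, which is (2).

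For assertion (3), fix $j$ with $-1\le j\le r-1$ and $k$ with $d_j\le k\le d_{j+1}-1$; write $N:=2^{n_{j+1}}$ and (in the case $j\ge 0$) $\sigma:=\rho_0\cdots\rho_j$, with the convention $\sigma=1$ when $j=-1$ so that the two cases are handled uniformly. From the representation in (2), $\prod_{m=d_j}^{d_{j+1}-1}(z-e_m)=\sigma^{N}\big((z\sigma^{-1})^{N}-1\big)=z^{N}-\sigma^{N}$. Differentiating and evaluating at $z=e_k$, and using $e_k^{N}=\sigma^{N}$ (since $e_k/\sigma$ is an $N$-th root of unity), gives
$$\prod_{m=d_j,\,m\ne k}^{d_{j+1}-1}(e_k-e_m)=N\,e_k^{N-1}=\frac{N\,e_k^{N}}{e_k}=\frac{N\,\sigma^{N}}{e_k}.$$
Taking absolute values and using $|e_k|=1$ and $|\sigma|=1$ gives $\prod_{m=d_j,\,m\ne k}^{d_{j+1}-1}|e_k-e_m|=N=2^{n_{j+1}}$, which is (3).

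I do not expect a serious obstacle here: the statement is essentially a packaging of the structure theorem, and the only points requiring care are bookkeeping ones — correctly identifying which indices $m$ fall in each block via \eqref{eqn:set.dj} and \eqref{eqn.decom.leja}, and handling the degenerate block $j=-1$ (where there is no rotation factor) uniformly with the generic blocks. If anything is mildly delicate, it is making sure that "a complete set of $2^{n_{j+1}}$-th roots of unity, arranged in a certain order" is genuinely all we need — the order is irrelevant since every quantity in the lemma is a product over the whole block — and that the modulus computation $|\sigma^N - e_k^N|$-type cancellations are legitimate, which they are because all the $\rho_i$ and all the $e_k$ lie on the unit circle.
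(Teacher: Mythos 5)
Your proof is correct and follows essentially the same route as the paper: both use the block decomposition \eqref{eqn.decom.leja} of the Leja section together with the identity $\prod_{\zeta^{N}=1}(z-\zeta)=z^{N}-1$, and both obtain assertion (3) by evaluating the derivative of the block polynomial at the node (the paper differentiates $z^{N}-1$ at $e'$ after dividing out the rotation, while you differentiate $z^{N}-\sigma^{N}$ directly, which is the same computation).
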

\begin{proof}
Since $E^{(j+1)}_{2^{n_{j+1}}}$ forms a complete set of the $2^{n_{j+1}}$-st roots of unity, the relation in (\ref{eqn.decom.leja}) gives 
$$\prod_{m=d_j}^{d_{j+1}-1} |z-e_m|=\prod_{e\in E^{(j+1)}_{2^{n_{j+1}}}}|z-\rho_j\cdots\rho_0e|=\prod_{e\in E^{(j+1)}_{2^{n_{j+1}}}}|(\rho_j\cdots\rho_0)^{-1}z-e|=|(z\rho_0^{-1}\cdots\rho_j^{-1})^{2^{n_{j+1}}}-1|.$$ 
This proves the first two assertions. For the third one, we observe that $e_k=\rho_j\cdots\rho_0e'$ with $e'\in E^{(j+1)}_{2^{n_{j+1}}}$. It follows that
\begin{equation*} 
\prod_{m=d_j,m\ne k}^{d_{j+1}-1} |e_k-e_m|=\prod_{e\in E^{(j+1)}_{2^{n_{j+1}}}, e\ne e'}|e'-e|=2^{n_{j+1}},
\end{equation*}
since the middle term is the modulus of the derivative of $z^{2^{n_{j+1}}}-1$ at $e'$. This completes the proof.
\end{proof}
\subsection{Some trigonometric inequalities}
Let $T_n$ be the {\em monic} Chebyshev polynomial of degree $n$, that is $2^{n-1}T_n(\cos\varphi)=\cos(n\varphi)$. If $\cos(n\beta)\ne \pm1$, then the equation $T_n(x)=T_n(\cos\beta)$ has $n$ distinct roots: $\cos(\beta+2m\pi/n)$, $m=0,\ldots,n-1$. Hence
\begin{equation}\label{eqn:chebyshev.repren}
T_n(\cos \varphi)-T_n(\cos\beta)=\prod_{m=0}^{n-1}[\cos\varphi-\cos(\beta+2m\pi/n)].
\end{equation}
Since both sides of (\ref{eqn:chebyshev.repren}) are continuous functions of $\beta$, relation (\ref{eqn:chebyshev.repren}) holds true for all $\beta\in\RR$. Now, the relation in (\ref{eqn.decom.leja}) implies that, for $-1\leq j\leq r-1$, 
\begin{equation}\label{eqn:argument.fract}
\{\theta_m: d_j\leq m\leq d_{j+1}-1\}=\{\varphi_0+\cdots+\varphi_j+2\pi k/2^{n_{j+1}}[2\pi]: 0\leq k\leq 2^{n_{j+1}}-1\}, 
\end{equation}
where we write $\alpha=\beta [2\pi]$ if $\alpha=\beta \textup{ mod } 2\pi$ and the $\varphi_j$'s do not appear when $j=-1$. Using (\ref{eqn:chebyshev.repren}) for $n=2^{n_{j+1}}$ and $\beta=\varphi_0+\cdots+\varphi_j-\psi$ we obtain from (\ref{eqn:argument.fract}) the following result. 
\begin{lemma}\label{lem:simplify.factor.triog}
Let $E=(e_n:n\in\NN)$ be a Leja sequence for $D$, $\theta_n=\textup{arg}\,e_n$, and $d=2^{n_0}+2^{n_1}+\dots+2^{n_r}$ with $n_0>n_1>\dots > n_r\geq 0$. Then for all $\psi\in\RR$ and $-1\leq j\leq r-1$ we have
$$\prod_{m=d_{j}}^{d_{j+1}-1}[\cos\varphi-\cos(\theta_m-\psi)]=\frac{1}{2^{2^{n_j+1}-1}}\Big (\cos 2^{n_{j+1}}\varphi-\cos 2^{n_{j+1}}(\varphi_0+\cdots+\varphi_j-\psi)\Big),$$
where the $d_i$'s are defined in (\ref{eqn:set.dj}).
\end{lemma}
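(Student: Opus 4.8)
The plan is to reduce the asserted identity entirely to the Chebyshev factorization~(\ref{eqn:chebyshev.repren}), using the explicit description of the arguments $\theta_m$ recorded in~(\ref{eqn:argument.fract}). Throughout, write $\Phi_j:=\varphi_0+\cdots+\varphi_j$ for $0\le j\le r-1$ and $\Phi_{-1}:=0$ (the empty sum), so that~(\ref{eqn:argument.fract}) reads
$$\{\theta_m:\ d_j\le m\le d_{j+1}-1\}=\Big\{\Phi_j+\tfrac{2\pi k}{2^{n_{j+1}}}\ [2\pi]:\ 0\le k\le 2^{n_{j+1}}-1\Big\}.$$

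First I would note that, since $\varphi\mapsto\cos\varphi$ is $2\pi$-periodic, replacing each $\theta_m$ by any representative modulo $2\pi$ leaves every factor $\cos\varphi-\cos(\theta_m-\psi)$ unchanged. Hence, for a fixed $\psi\in\RR$ and a fixed $j$ with $-1\le j\le r-1$,
$$\prod_{m=d_j}^{d_{j+1}-1}\big[\cos\varphi-\cos(\theta_m-\psi)\big]=\prod_{k=0}^{2^{n_{j+1}}-1}\Big[\cos\varphi-\cos\Big((\Phi_j-\psi)+\tfrac{2\pi k}{2^{n_{j+1}}}\Big)\Big].$$
Next I would apply~(\ref{eqn:chebyshev.repren}) with $n=2^{n_{j+1}}$ and $\beta=\Phi_j-\psi$; this is legitimate because, as observed right after~(\ref{eqn:chebyshev.repren}), that relation holds for every real $\beta$. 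It identifies the last product with $T_{2^{n_{j+1}}}(\cos\varphi)-T_{2^{n_{j+1}}}\big(\cos(\Phi_j-\psi)\big)$. Finally I would use the normalization $2^{n-1}T_n(\cos\alpha)=\cos(n\alpha)$ twice, with $\alpha=\varphi$ and with $\alpha=\Phi_j-\psi$, which turns this difference into $\frac{1}{2^{2^{n_{j+1}}-1}}\big(\cos 2^{n_{j+1}}\varphi-\cos 2^{n_{j+1}}(\varphi_0+\cdots+\varphi_j-\psi)\big)$, which is exactly the claimed formula.

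I do not expect a genuine obstacle: the argument is just a rearrangement of~(\ref{eqn:argument.fract}) followed by~(\ref{eqn:chebyshev.repren}) and the monic-Chebyshev normalization. The only two points deserving a word of care are the boundary case $j=-1$, where no $\varphi_i$ occurs and one invokes~(\ref{eqn.decom.leja}) together with Theorem~\ref{structure-Leja} to see that $E(d_{-1}:d_0-1)=E_{2^{n_0}}$ is precisely the set of $2^{n_0}$-th roots of unity (so that taking $\Phi_{-1}=0$ is consistent), and the passage from the multiset of arguments to representatives modulo $2\pi$, which is immediate from periodicity of the cosine. (Incidentally, the exponent $2^{n_j+1}-1$ printed in the statement should read $2^{n_{j+1}}-1$, as the computation above makes clear.)
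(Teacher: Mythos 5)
Your proposal is correct and is essentially the paper's own argument: the lemma is obtained exactly by combining the description of the arguments in (\ref{eqn:argument.fract}) with the Chebyshev factorization (\ref{eqn:chebyshev.repren}) applied with $n=2^{n_{j+1}}$ and $\beta=\varphi_0+\cdots+\varphi_j-\psi$, together with the monic normalization $2^{n-1}T_n(\cos\alpha)=\cos(n\alpha)$ and the $2\pi$-periodicity of the cosine. Your remark about the exponent is also right: the denominator should read $2^{2^{n_{j+1}}-1}$ (the printed $2^{n_j+1}$ is a subscripting typo), as forced by the fact that the product has $2^{n_{j+1}}$ factors.
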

\begin{lemma}\label{lem:prod.one.missing}
If $n\geq 1$, $0\leq j\leq n-1$ and $\sin(\beta+2j\pi/n)\ne 0$, then
$$\prod_{m=0,m\ne j}^{n-1} |\cos\varphi-\cos(\beta+2m\pi/n)|\leq \frac{2n}{2^{n-1}|\sin(\beta+2j\pi/n)|},\quad \varphi\in\RR.$$ 
\end{lemma}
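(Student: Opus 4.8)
The plan is to evaluate the product with one factor removed in closed form by means of the Chebyshev relation (\ref{eqn:chebyshev.repren}), and then to reduce the desired bound to the elementary estimate $|\sin(ku)/\sin u|\le |k|$ for integers $k$.

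First I would set $\alpha:=\beta+2j\pi/n$, so that the factor corresponding to $m=j$ in (\ref{eqn:chebyshev.repren}) is $\cos\varphi-\cos\alpha$, and $\cos(n\alpha)=\cos(n\beta+2j\pi)=\cos(n\beta)$. Combining (\ref{eqn:chebyshev.repren}) with $2^{n-1}T_n(\cos\theta)=\cos(n\theta)$ and dividing out this factor gives, for every $\varphi$ with $\cos\varphi\ne\cos\alpha$,
\[
\prod_{m=0,\ m\ne j}^{n-1}\bigl(\cos\varphi-\cos(\beta+2m\pi/n)\bigr)=\frac{\cos n\varphi-\cos n\alpha}{2^{n-1}(\cos\varphi-\cos\alpha)}.
\]
Putting $u:=(\varphi-\alpha)/2$ and $v:=(\varphi+\alpha)/2$, so that $v-u=\alpha$ and $u+v=\varphi$, the product-to-sum formulas $\cos n\varphi-\cos n\alpha=-2\sin(nu)\sin(nv)$ and $\cos\varphi-\cos\alpha=-2\sin u\sin v$ turn the right-hand side into $\sin(nu)\sin(nv)/(2^{n-1}\sin u\sin v)$. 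Both sides are trigonometric polynomials in $\varphi$ once the ratios $\sin(ku)/\sin u$ are read as the corresponding polynomials in $\cos u$, so the identity extends by continuity to the finitely many $\varphi$ modulo $2\pi$ at which the denominator vanishes, and in particular the product on the left is bounded on $\RR$.

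Then I would multiply through by $\sin\alpha=\sin(v-u)=\sin v\cos u-\cos v\sin u$ and expand, obtaining
\[
\sin\alpha\prod_{m\ne j}\bigl(\cos\varphi-\cos(\beta+2m\pi/n)\bigr)=\frac{1}{2^{n-1}}\left(\sin(nv)\,\frac{\sin(nu)\cos u}{\sin u}-\sin(nu)\,\frac{\sin(nv)\cos v}{\sin v}\right).
\]
Since $2\sin(nu)\cos u=\sin((n+1)u)+\sin((n-1)u)$ and $|\sin(ku)/\sin u|\le|k|$ — the latter because $\sin(ku)/\sin u=\sum_{\ell=0}^{k-1}e^{i(k-1-2\ell)u}$ is a sum of $k$ terms of modulus one — we get $|\sin(nu)\cos u/\sin u|\le\frac12\bigl((n+1)+(n-1)\bigr)=n$, and likewise with $v$ in place of $u$. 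Using $|\sin(nu)|\le1$ and $|\sin(nv)|\le1$, the right-hand side is at most $2n/2^{n-1}$ in absolute value, and dividing by $|\sin\alpha|=|\sin(\beta+2j\pi/n)|$ yields exactly the claimed inequality. The case $n=1$ is trivial: the product is empty and the inequality reads $1\le 2/|\sin\beta|$.

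I expect no serious obstacle. The only point needing care is the passage through the removable singularities of $\sin(nu)/\sin u$ and $\sin(nv)/\sin v$, which is handled by observing that, after clearing denominators, all the quantities involved are trigonometric polynomials, so an identity valid on a dense set holds everywhere. The one mildly creative step is to multiply by $\sin(v-u)$ and split it as $\sin v\cos u-\cos v\sin u$, which converts the product of two Dirichlet-type kernels into a sum of two individually bounded terms.
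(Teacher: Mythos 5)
Your proposal is correct and follows essentially the same route as the paper: it evaluates the product with one factor removed via the Chebyshev identity, converts to the ratio $\sin(nu)\sin(nv)/(2^{n-1}\sin u\sin v)$ with the same half-angle substitution (the paper's $\psi_1,\psi_2$ are your $v,u$), multiplies by $\sin(\beta+2j\pi/n)=\sin v\cos u-\cos v\sin u$ to split into two terms, and bounds each by $n$ using $|\sin n\theta|\leq n|\sin\theta|$. The only cosmetic difference is your extra product-to-sum step for $\sin(nu)\cos u$, where the paper simply bounds $|\cos u|\leq 1$.
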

\begin{proof}
In view of (\ref{eqn:chebyshev.repren}) we have
\begin{equation}\label{eqn:prod.one.missing}
\prod_{m=0,m\ne j}^{n-1} |\cos\varphi-\cos(\beta+2m\pi/n)|=\Big |\frac{\cos(n\varphi)-\cos(n\beta) }{2^{n-1}[\cos\varphi-\cos(\beta+2j\pi/n)]}\Big |.
\end{equation}
Set $\psi=\beta+2j\pi/n$, $\psi_1=(1/2)(\varphi+\psi), \psi_2=(1/2)(\varphi-\psi)$. Then the sum-to-product formula for cosines transforms the right hand side of (\ref{eqn:prod.one.missing}) into $2^{-n+1}|\sin n\psi_1\sin n\psi_2|/|\sin\psi_1\sin\psi_2|$. Since $\sin\psi=\sin\psi_1\cos\psi_2-\cos\psi_1\sin\psi_2$, we obtain after simplication 
\begin{eqnarray*}
\prod_{m=0,m\ne j}^{n-1} |\cos\varphi-\cos(\beta+2m\pi/n)|
&=&\frac{|\sin\psi_1\cos\psi_2-\cos\psi_1\sin\psi_2|\cdot|\sin n\psi_1\sin n\psi_2|}{2^{n-1}|\sin\psi||\sin\psi_1\sin\psi_2|}\\
&\leq&\frac{1}{2^{n-1}|\sin\psi|}\Big(\frac{|\sin n\psi_2|}{|\sin\psi_2|}+\frac{|\sin n\psi_1|}{|\sin\psi_1|}\Big)\\
&\leq& \frac{2n}{2^{n-1}|\sin(\beta+2j\pi/n)|},
\end{eqnarray*}
where we use the classical inequality $|\sin n\alpha|\leq n|\sin\alpha|$ for $\alpha\in\RR$, $n\in\NN$ in the third line.
\end{proof}
Now, for $d_{j}\leq s\leq d_{j+1}-1$, Lemma \ref{lem:prod.one.missing} and equation (\ref{eqn:argument.fract}) imply that
\begin{equation}\label{eqn:one.missing.gener}
\prod_{m=d_{j}, m\ne s}^{d_{j+1}-1}[\cos\varphi-\cos(\theta_m+\beta)]\leq \frac{2\cdot 2^{n_{j+1}}}{2^{2^{n_j+1}-1}|\sin(\theta_s+\beta)|},\quad\varphi\in\RR.
\end{equation}
In (\ref{eqn:one.missing.gener}), taking $\beta=-\frac{\theta_s+\theta_t}{2}$ with $s<t$, we get the following result.
\begin{lemma}\label{lem:one.argument.missing}
Under the same assumptions of Lemma \ref{lem:simplify.factor.triog}, if $d_{j}\leq s\leq d_{j+1}-1$ with $-1\leq j\leq r-1$ and $s<t\leq d-1$, then
$$\prod_{m=d_{j}, m\ne s}^{d_{j+1}-1}[\cos\varphi-\cos(\theta_m-\frac{\theta_s+\theta_t}{2})]\leq \frac{2\cdot 2^{n_{j+1}}}{2^{2^{n_j+1}-1}|\sin\frac{\theta_s-\theta_t}{2}|}, \quad\varphi\in\RR.$$
\end{lemma}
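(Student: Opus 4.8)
The plan is to obtain the estimate as a direct specialization of the inequality (\ref{eqn:one.missing.gener}), so that essentially all of the work has already been done: the substantive trigonometric input is Lemma~\ref{lem:prod.one.missing} together with the identification (\ref{eqn:argument.fract}) of the block of arguments $\{\theta_m:d_j\le m\le d_{j+1}-1\}$ with an equally spaced set, which is precisely what produces (\ref{eqn:one.missing.gener}).

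First I would check that the target bound is meaningful, i.e. that $\sin\frac{\theta_s-\theta_t}{2}\ne 0$. Since $E$ is a Leja sequence for $D$, each product $z\mapsto\prod_{m=0}^{k-1}(z-e_m)$ attains its maximum modulus over $D$ on the unit circle and at a point where it does not vanish; hence $e_n\in\partial D$ for every $n$ and the $e_n$ are pairwise distinct (this is also transparent from the structure described in Theorem~\ref{structure-Leja}). Consequently $e_s\ne e_t$, so $\theta_s\not\equiv\theta_t\ [2\pi]$ and $\sin\frac{\theta_s-\theta_t}{2}\ne 0$; the same observation shows that the admissibility hypothesis $\sin(\theta_s+\beta)\ne 0$ in (\ref{eqn:one.missing.gener}) is satisfied for the value of $\beta$ used below.

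Then I would apply (\ref{eqn:one.missing.gener}) with the shift $\beta=-\frac{\theta_s+\theta_t}{2}$; this is legitimate for any index $s$ satisfying $d_j\le s\le d_{j+1}-1$, which is exactly the hypothesis of the lemma. With this choice $\theta_m+\beta=\theta_m-\frac{\theta_s+\theta_t}{2}$ for every $m$, and in particular $\theta_s+\beta=\frac{\theta_s-\theta_t}{2}$, so $|\sin(\theta_s+\beta)|=\bigl|\sin\frac{\theta_s-\theta_t}{2}\bigr|$. Substituting these two identities into (\ref{eqn:one.missing.gener}) turns it verbatim into the asserted inequality, valid for all $\varphi\in\RR$.

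I do not expect a genuine obstacle here; the only points needing a word of justification are the non-vanishing of $\sin\frac{\theta_s-\theta_t}{2}$ (equivalently, the distinctness of the Leja nodes) and the fact that the omitted index $s$ lies in the block $\{d_j,\dots,d_{j+1}-1\}$, so that (\ref{eqn:one.missing.gener}) is applicable with that particular missing factor. The heavier estimates — the telescoping via the monic Chebyshev identity (\ref{eqn:chebyshev.repren}) and the elementary inequality $|\sin n\alpha|\le n|\sin\alpha|$ — have already been carried out in Lemma~\ref{lem:prod.one.missing} and in the passage from (\ref{eqn:argument.fract}) to (\ref{eqn:one.missing.gener}).
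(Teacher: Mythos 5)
Your proof is correct and takes essentially the same route as the paper: the paper obtains the lemma precisely by taking $\beta=-\frac{\theta_s+\theta_t}{2}$ in (\ref{eqn:one.missing.gener}), which itself comes from Lemma \ref{lem:prod.one.missing} combined with (\ref{eqn:argument.fract}). Your additional check that the nodes are distinct, so that $\sin\frac{\theta_s-\theta_t}{2}\ne 0$ and the hypothesis of Lemma \ref{lem:prod.one.missing} is met, is a harmless justification the paper leaves implicit.
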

\subsection{Further results} 
The following lemma will be used to get lower bounds for the denominators of $P_{st}$ and $Q_{st}$. 
\begin{lemma}\label{pro:produc-leja}
Let  $E=(e_n: n\in \NN)$ be a Leja sequence for $D$ and $d\geq 2$, $d=2^{n_0}+2^{n_1}+\cdots+2^{n_r}$ with $n_0>n_1>\cdots>n_r\geq 0$. Then $\prod_{m=0,m\ne s}^{d-1}|e_s-e_m|\geq 2^r$ for all $0\leq s\leq d-1$.
\end{lemma}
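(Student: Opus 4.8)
The plan is to use the decomposition of the Leja section $E_d$ recalled in \eqref{eqn:define.rho}--\eqref{eqn.decom.leja}, which splits $\{0,1,\dots,d-1\}$ into $r+1$ consecutive blocks $B_j := \{d_j,\dots,d_{j+1}-1\}$ for $-1\leq j\leq r-1$ (with $d_{-1}=0$), where the points of the $j$-th block form (a rotate of) the full set of $2^{n_{j+1}}$-th roots of unity. Fix $s$ and say $s\in B_{j_0}$. I would write
$$\prod_{m=0,\,m\ne s}^{d-1}|e_s-e_m| \;=\; \Big(\prod_{\substack{m\in B_{j_0}\\ m\ne s}}|e_s-e_m|\Big)\cdot\prod_{j\ne j_0}\Big(\prod_{m\in B_j}|e_s-e_m|\Big),$$
and estimate the two types of factors separately. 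For the block containing $s$, Lemma \ref{lem:simplify.factor}(3) gives exactly $\prod_{m\in B_{j_0},\,m\ne s}|e_s-e_m| = 2^{n_{j_0+1}}\geq 2^0 = 1$, so that factor is harmless (indeed $\geq 1$).

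The heart of the matter is the product over a block $B_j$ with $j\ne j_0$. By Lemma \ref{lem:simplify.factor}(1)--(2) this product equals $|\zeta^{2^{n_{j+1}}}-1|$ where $\zeta = e_s\rho_0^{-1}\cdots\rho_j^{-1}$ (and $\zeta = e_s$ when $j=-1$); since $e_s\in D$ and the $\rho_i$ have modulus $1$, we have $|\zeta|\le 1$, and moreover $\zeta$ itself is a $2^{n_{j_0+1}}$-th root of unity rotated by $\rho$'s, so $|\zeta|=1$ and $\zeta^{2^{n_{j+1}}}$ lies on the unit circle. Here is where I expect the main obstacle: a priori $|\zeta^{2^{n_{j+1}}}-1|$ could be as small as $0$, which would wreck a naive bound. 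The key observation to push through is that $\zeta$ is, up to the rotation factors $\rho_0\cdots\rho_j$, a $2^{n_{j_0+1}}$-th root of unity, and crucially $\rho_i^{2^{n_i}}=-1$ for $i\le j$, so raising to the power $2^{n_{j+1}}$ (with $n_{j+1}$ strictly smaller than all of $n_0,\dots,n_j$) makes each $\rho_i^{2^{n_{j+1}}}$ a genuine root of $-1$ of even order, never equal to $1$. One then checks that $\zeta^{2^{n_{j+1}}} = -\omega$ for some root of unity $\omega$, so $|\zeta^{2^{n_{j+1}}}-1| = |1+\omega| \ge$ something bounded below; in the worst case $\omega$ can be close to $-1$, but the point is to show $\zeta^{2^{n_{j+1}}}$ is a \emph{primitive} root of unity of controlled order or is exactly $-1$, giving $|\zeta^{2^{n_{j+1}}}-1|\ge 1$ in fact $=2$ when it equals $-1$, and more generally $\ge \sqrt2$ or at least a universal positive constant, so each such factor contributes at least (a constant) $\ge 1$.

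Granting that each of the $r$ factors with $j\ne j_0$ is $\ge 1$ is not quite enough — we need the product to be $\ge 2^r$, so I would argue each such factor is in fact $\ge 2$. The cleanest route: show $\zeta^{2^{n_{j+1}}} = -1$ exactly. Indeed $\zeta = e_s(\rho_0\cdots\rho_j)^{-1}$ and $e_s = \rho_{j_0-1}\cdots\rho_0\,\eta$ with $\eta$ a $2^{n_{j_0+1}}$-th root of unity (from \eqref{eqn.decom.leja}); then $\zeta^{2^{n_{j+1}}}$ collects the relevant $\rho_i^{2^{n_{j+1}}}$, and since $n_{j+1}<n_i$ for the surviving indices $i>j$ and $\rho_i^{2^{n_i}}=-1$, an exact power count shows the contribution is precisely $-1$ while $\eta^{2^{n_{j+1}}}=1$. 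Hence $|\zeta^{2^{n_{j+1}}}-1| = |-1-1| = 2$ for each of the $r$ blocks not containing $s$, and combined with the factor $2^{n_{j_0+1}}\ge 1$ from the block of $s$ we get $\prod_{m\ne s}|e_s-e_m|\ge 2^r$, as claimed. The delicate point to get right in the write-up is the exact bookkeeping of which $\rho_i$'s survive the exponentiation and that $n_{j+1}$ is strictly less than all larger-index $n_i$; everything else is a short computation using Lemma \ref{lem:simplify.factor}.
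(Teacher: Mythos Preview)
Your decomposition is right, and your claim that the block product over $B_j$ equals $|\zeta^{2^{n_{j+1}}}-1|$ with $\zeta=e_s(\rho_0\cdots\rho_j)^{-1}$ is correct. The step that fails is the assertion that $\zeta^{2^{n_{j+1}}}=-1$ for \emph{every} $j\neq j_0$. Your power count works only when $j<j_0$ (and for $j=-1$): there $e_s=\rho_{j_0}\cdots\rho_0\,\eta$ with $\eta^{2^{n_{j_0+1}}}=1$, so $\zeta=\rho_{j_0}\cdots\rho_{j+1}\eta$; since $n_{j+1}\geq n_i$ for $j+1\leq i\leq j_0$ with equality only at $i=j+1$, one gets $\rho_{j+1}^{2^{n_{j+1}}}=-1$ and $\rho_i^{2^{n_{j+1}}}=1$ for $i>j+1$, while $\eta^{2^{n_{j+1}}}=1$ because $n_{j+1}>n_{j_0+1}$. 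So those blocks do contribute exactly $2$.

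For $j>j_0$ the argument breaks: now $n_{j+1}<n_{j_0+1}$, so $\eta^{2^{n_{j+1}}}$ need not be $1$, and the surviving rotation factors $\rho_{j_0+1},\dots,\rho_j$ have $n_i>n_{j+1}$, so $\rho_i^{2^{n_{j+1}}}$ is a primitive $2^{\,n_i-n_{j+1}+1}$-th root of unity, not $\pm 1$. Concretely, take $d=5=2^2+2^0$, $s=0$, $e_0=1$, and $\rho_0=e^{i\pi/4}$. The block $B_0=\{4\}$ gives $|e_0-\rho_0|=2\sin(\pi/8)\approx 0.765$, which is $<2$ (indeed $<1$). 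The overall bound still holds here because the block $B_{-1}$ containing $s$ contributes $2^{n_0}=4$, not merely $\geq 1$; but your argument discards that factor as ``harmless'' and therefore cannot close the gap.

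This is exactly why the paper does not argue blockwise for $j>j_0$. It keeps the precise value $2^{n_{j_0+1}}$ from the block of $s$ and bounds the \emph{joint} product of the sines over $k>j_0$ using Lemma~\ref{lem:ineq+} (a nontrivial trigonometric inequality from \cite{calvimanh2}), obtaining
\[
\prod_{k=j_0+1}^{r-1}\bigl|\sin 2^{n_{k+1}-1}(\theta_s-\varphi_0-\cdots-\varphi_k)\bigr|\;\geq\;2^{-(n_{j_0+1}-n_r)}\;\geq\;2^{-n_{j_0+1}},
\]
so that the deficit from those blocks is exactly cancelled by the $2^{n_{j_0+1}}$ coming from $B_{j_0}$. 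To repair your proof you must either invoke Lemma~\ref{lem:ineq+} for the blocks $j>j_0$, or find an independent lower bound for $\prod_{j>j_0}|\zeta_j^{2^{n_{j+1}}}-1|$ of size at least $2^{r-1-j_0}\cdot 2^{-n_{j_0+1}}$; a uniform bound of $2$ per block is simply false.
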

The proof of Lemma \ref{pro:produc-leja} requires a purely trigonometric inequality given in the following lemma. For the proof we refer the reader to \cite{calvimanh2}.
\begin{lemma}\label{lem:ineq+}
Let $r\geq 1$ and let $n_0>n_1>\cdots>n_r\geq 0$ be a decreasing sequence of natural numbers. If $\varphi_j=(2q_j+1)\pi/2^{n_j}$ with $q_j\in \ZZ$, $j=0,\ldots,r-1$, then 
\begin{equation}\label{for:trigono-inequ}
\prod_{j=0}^{r-1}|\sin 2^{n_{j+1}-1}(\varphi-\varphi_0-\cdots-\varphi_j)|\geq (1/2^{n_0-n_r})|\cos 2^{n_0-1}\varphi|,\quad\,\varphi\in\RR.
\end{equation}
\end{lemma}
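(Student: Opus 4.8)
The plan is to argue by induction on $r$, peeling off one factor of the product at each step. The engine of the whole argument is the elementary observation that, because $\varphi_0=(2q_0+1)\pi/2^{n_0}$, the number $2^{n_0-1}\varphi_0=(2q_0+1)\pi/2$ is an odd multiple of $\pi/2$; hence the addition formula for cosine gives $\cos\big(2^{n_0-1}\varphi\big)=\cos\big(2^{n_0-1}(\varphi-\varphi_0)+(2q_0+1)\pi/2\big)=\pm\sin\big(2^{n_0-1}(\varphi-\varphi_0)\big)$, so that
$$\big|\cos 2^{n_0-1}\varphi\big|=\big|\sin 2^{n_0-1}(\varphi-\varphi_0)\big|.$$
This turns the cosine on the right-hand side of (\ref{for:trigono-inequ}) into a sine of the shifted variable $\theta:=\varphi-\varphi_0$, which is exactly the variable appearing in the factors of the product. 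Besides this, I would use only two classical facts: the double-angle identity $\sin\alpha\cos\alpha=\tfrac12\sin 2\alpha$, and the inequality $|\sin m\alpha|\le m|\sin\alpha|$ valid for every positive integer $m$ (already invoked in the proof of Lemma \ref{lem:prod.one.missing}).

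For the base case $r=1$ the product is the single factor $|\sin 2^{n_1-1}\theta|$ and, after the reduction above, the claim becomes $|\sin 2^{n_1-1}\theta|\ge 2^{-(n_0-n_1)}|\sin 2^{n_0-1}\theta|$. Writing $u=2^{n_1-1}\theta$ and $m=2^{n_0-n_1}$ (a positive integer since $n_0>n_1$) we have $2^{n_0-1}\theta=mu$, and $|\sin mu|\le m|\sin u|$ is precisely the required estimate.

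For the inductive step I would separate the factor $j=0$ from the rest. In terms of $\theta=\varphi-\varphi_0$ the remaining factors ($j=1,\dots,r-1$) form, after the reindexing $m_i:=n_{i+1}$ and $\tilde\varphi_i:=\varphi_{i+1}$ (which again have the form $(2\tilde q_i+1)\pi/2^{m_i}$), exactly the product treated by the lemma for the shorter chain $m_0>\cdots>m_{r-1}$. The induction hypothesis bounds this remaining product below by $2^{-(n_1-n_r)}\big|\cos 2^{n_1-1}\theta\big|$. Multiplying back by the separated factor $|\sin 2^{n_1-1}\theta|$ and collapsing with the double-angle identity gives
$$\prod_{j=0}^{r-1}\big|\sin 2^{n_{j+1}-1}(\varphi-\varphi_0-\cdots-\varphi_j)\big|\ge \frac{1}{2^{\,n_1-n_r+1}}\big|\sin 2^{n_1}\theta\big|.$$
It then remains to compare this with the target $2^{-(n_0-n_r)}|\sin 2^{n_0-1}\theta|$: setting $v=2^{n_1}\theta$ and $M=2^{n_0-1-n_1}\ge 1$, one has $2^{n_0-1}\theta=Mv$ and $2^{-(n_0-n_r)}=2^{-(n_1-n_r+1)}M^{-1}$, so the desired inequality reduces once more to $|\sin Mv|\le M|\sin v|$.

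The only genuinely delicate point is the exponent bookkeeping: one must keep track of the powers of $2$ produced by the reduction identity, by the induction hypothesis, and by each double-angle collapse, and check that they combine to leave exactly a factor $M^{-1}=2^{\,n_1+1-n_0}$ that the inequality $|\sin Mv|\le M|\sin v|$ can absorb. Everything else is a mechanical application of the two elementary identities, so I expect no structural difficulty beyond organizing the induction and the reindexing cleanly.
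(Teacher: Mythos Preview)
The paper does not actually prove this lemma here; immediately after the statement it says ``For the proof we refer the reader to \cite{calvimanh2}.'' So there is no in-text argument to compare against.

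Your induction is correct. The key identity $|\cos 2^{n_0-1}\varphi|=|\sin 2^{n_0-1}(\varphi-\varphi_0)|$, which follows from $2^{n_0-1}\varphi_0=(2q_0+1)\pi/2$, is exactly what lets you trade the cosine on the right for a sine of the shifted variable $\theta=\varphi-\varphi_0$. The base case $r=1$ then reduces cleanly to $|\sin(2^{n_0-n_1}u)|\le 2^{n_0-n_1}|\sin u|$ with $u=2^{n_1-1}\theta$. In the inductive step your reindexing $m_i=n_{i+1}$, $\tilde\varphi_i=\varphi_{i+1}$ legitimately recasts the remaining $r-1$ factors as an instance of the lemma at level $r-1$ (the hypothesis $\tilde\varphi_i=(2q_{i+1}+1)\pi/2^{m_i}$ is preserved), yielding the lower bound $2^{-(n_1-n_r)}|\cos 2^{n_1-1}\theta|$. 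Multiplying by the peeled factor $|\sin 2^{n_1-1}\theta|$ and applying the double-angle identity gives $2^{-(n_1-n_r+1)}|\sin 2^{n_1}\theta|$, and the final comparison with $2^{-(n_0-n_r)}|\sin 2^{n_0-1}\theta|$ is again just $|\sin Mv|\le M|\sin v|$ with $M=2^{n_0-n_1-1}\ge 1$ (a positive integer since $n_0>n_1$). The exponent bookkeeping you flag as the delicate point checks out exactly. Your self-contained proof could replace the external reference without loss.
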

\begin{proof}[Proof of Lemma \ref{pro:produc-leja}]
The case $r=0$ is trivial, since $\prod_{m=0,m\ne s}^{2^{n_0}-1}|e_s-e_m|=2^{n_0}$. Thus we may assume that $r\geq 1$. Notice that, since $\textup{arg}\,e_s=\theta_s$ and $\textup{arg}\,\rho_j=\varphi_j=(2q_j+1)\pi/2^{n_{j}}$, $0\leq j\leq r-1$, 
\begin{equation}\label{eqn:factor.to.trigon}
|(e_s\rho_0^{-1}\cdots\rho_{k}^{-1})^{2^{n_{k+1}}}-1|=2|\sin 2^{n_{k+1}-1}(\theta_s-\varphi_0-\cdots-\varphi_{k})|,\quad\quad 0\leq k\leq r-1.
\end{equation} 
First, suppose that $r\geq 2$ and $s\geq 2^{n_0}$. Then there exists a unique $0\leq j\leq r-1$ such that $d_j\leq s\leq d_{j+1}-1$, where the $d_j$'s are defined in (\ref{eqn:set.dj}). We write
\begin{equation}\label{eqn:three.factor}
\prod_{m=0,m\ne s}^{d-1}|e_s-e_m|=\prod_{m=d_{-1}}^{d_0-1}|e_s-e_m|\cdot\prod_{m=d_{j},m\ne s}^{d_{j+1}-1}|e_s-e_m|\cdot \prod_{k=0,k\ne j}^{r-1}\prod_{m=d_{k}}^{d_{k+1}-1}|e_s-e_m|.
\end{equation}
We will treat three factors in (\ref{eqn:three.factor}) independently. The first and the third part of Lemma \ref{lem:simplify.factor} give $\prod_{m=d_{-1}}^{d_0-1}|e_s-e_m|=|e_s^{2^{n_0}}-1|=2$, and $\prod_{m=d_{j},m\ne s}^{d_{j+1}-1}|e_s-e_m|=2^{n_{j+1}}$. On the other hand, the second part of Lemma \ref{lem:simplify.factor} along with equation (\ref{eqn:factor.to.trigon}) yields 
\begin{eqnarray}
\prod_{k=0,k\ne j}^{r-1}\prod_{m=d_{k}}^{d_{k+1}-1}|e_s-e_m|
&=&\prod_{k=0,k\ne j}^{r-1}|(e_s\rho_0^{-1}\cdots\rho_{k}^{-1})^{2^{n_{k+1}}}-1|\\
&=&2^{r-1} \prod_{k=0,k\ne j}^{r-1}|\sin 2^{n_{k+1}-1}(\theta_s-\varphi_0-\cdots-\varphi_{k})|.\label{eqn:trigo.prod.1}
\end{eqnarray}
Since $d_j\leq s\leq d_{j+1}-1$, relation (\ref{eqn:argument.fract}) tells us that $\theta_s=\varphi_0+\cdots+\varphi_{j}+2q\pi/2^{n_{j+1}}[2\pi]$ for some $q\in\ZZ$. To estimate the product in (\ref{eqn:trigo.prod.1}) we proceed  as in  \cite[Subsection 4.4]{calvimanh2}.  For the convenience of reader, we reproduce the proof .  For $0\leq k<j$, we have $\theta_s-\varphi_0-\cdots-\varphi_k=\varphi_{k+1}+\cdots+\varphi_{j}+2q\pi/2^{n_{j+1}}[2\pi]$. Hence the hypotheses on the values of $\varphi_0,\ldots,\varphi_{r-1}$ give   
\begin{equation}
2^{n_{k+1}-1}(\theta_s-\varphi_0-\cdots-\varphi_k)=\big((2q_{k+1}+1)\pi/2\big)[\pi].
\end{equation}
It follows that 
\begin{equation}\label{eqn:first-term}
\prod_{k=0}^{j-1}|\sin 2^{n_{k+1}-1}(\theta_s-\varphi_0-\cdots-\varphi_k)|=1.
\end{equation}
 On the other hand, using Lemma \ref{lem:ineq+} for $\varphi=\theta_s-\varphi_0-\cdots-\varphi_{j}$, i.e., $\varphi=2q\pi/2^{n_{j+1}}[2\pi]$, we obtain
\begin{equation}\label{eqn:second-term}
\prod_{k=j+1}^{r-1}|\sin 2^{n_{k+1}-1}(\theta_s-\varphi_0-\cdots-\varphi_k)| 
\geq (1/2^{n_{j+1}-n_r})|\cos 2^{n_{j+1}-1}\varphi|=1/2^{n_{j+1}-n_r}.
\end{equation}
Combining (\ref{eqn:first-term}) and (\ref{eqn:second-term}) we get
\begin{equation}\label{eqn:third-term}
\prod_{k=0,k\ne j}^{r-1}|\sin 2^{n_{k+1}-1}(\theta_s-\varphi_0-\cdots-\varphi_k)| \geq 1/2^{n_{j+1}-n_r}\geq 1/2^{n_{j+1}}.
\end{equation} 
Note that when $j=r-1$, then the left hand side of (\ref{eqn:third-term}) is equal to 1 and inequality (\ref{eqn:third-term}) is obviously true. When $j=0$, then the factor in (\ref{eqn:first-term}) does not appear. Now, using relation (\ref{eqn:third-term}) in (\ref{eqn:trigo.prod.1}) we get the following estimate 
$$ \prod_{k=0,k\ne j}^{r-1}\prod_{m=d_k}^{d_{k+1}-1}|e_s-e_m|\geq 2^{r-1} 2^{-n_{j+1}}.$$
In this case, we finally obtain
$$\prod_{m=0,m\ne s}^{d-1}|e_s-e_m|=2\cdot 2^{n_{j+1}}\cdot 2^{r-1}\cdot 2^{-n_{j+1}}=2^r.$$
We now treat the case $r\geq1$ and $0\leq s\leq 2^{n_0}-1$. The proof is the same as above. Indeed, thanks to Lemma \ref{lem:simplify.factor} and (\ref{eqn:factor.to.trigon}), we can write
\begin{eqnarray}
\prod_{m=0,m\ne s}^{d-1}|e_s-e_m|
&=&\prod_{m=d_{-1},m\ne s}^{d_0-1}|e_s-e_m| \prod_{k=0}^{r-1}\prod_{m=d_{k}}^{d_{k+1}-1}|e_s-e_m|\\
&=&2^{n_0}2^r \prod_{k=0}^{r-1}|\sin 2^{n_{k+1}-1}(\theta_s-\varphi_0-\cdots-\varphi_{k})|.\label{eqn:trigo.prod.2}
\end{eqnarray}
Now using Lemma \ref{lem:ineq+} in (\ref{eqn:trigo.prod.2}) we get 
$$\prod_{m=0,m\ne s}^{d-1}|e_s-e_m|\geq 2^{n_0}2^r 2^{-n_0+n_r}|\cos 2^{n_0-1}\theta_s|=2^{r+n_r}\geq 2^r,$$
since $|\cos 2^{n_0-1}\theta_s|=1$ for $0\leq s\leq 2^{n_0}-1$. The last case is when $r=1$ and $2^{n_0}\leq s\leq 2^{n_0}+2^{n_1}-1$. The proof is simple and we omit it. 
 \end{proof}
\begin{remark} Take $d=2^{n_0}+1$ and $s=2^{n_0}$ then $\prod_{m=0}^{2^{n_0}-1}|e_{2^{n_0}}-e_m|=|e_{2^{n_0}}^{2^{n_0}}-1|=|-1-1|=2$. Thus the conclusion of Lemma \ref{pro:produc-leja} is optimal in some cases.
\end{remark}

\section{Proof of Theorems \ref{thm.second.lebesgue.type} and \ref{thm.third.lebesgue.type} }
We continue to use the notation introduced in the previous two sections. Let $E=(e_n:n\in\NN)$ be a Leja sequence for $D$ and $\theta_n=\textup{arg}\,e_n$, $n\geq 0$. We use the formula for the polynomials $h_{st}$ and $h'_{st}$ given in (\ref{eqn:hst.new}) and (\ref{eqn:deri.hst.new}). We start with the following simple observations    
$$e_u-e_v=2i\sin\frac{\theta_u-\theta_v}{2}e^{\frac{\theta_u+\theta_v}{2}i}\quad \textup{and}\quad (e_u-e_v)^{\bot}=-2\sin\frac{\theta_u-\theta_v}{2}e^{\frac{\theta_u+\theta_v}{2}i}\quad \text{for}\quad u\ne v.$$
Set $\alpha_{uv}:=e^{\frac{\theta_u+\theta_v}{2}i}$ and $\beta_{uv}:=-2\sin\frac{\theta_u-\theta_v}{2}$. Then we immediately see that
\begin{equation}\label{eqn:simple.equality}
(e_u-e_v)^{\bot}=\alpha_{uv}\beta_{uv}\quad\text{and}\quad |e_u-e_v|=|(e_u-e_v)^{\bot}|=|\beta_{uv}|\quad \text{for}\quad u\ne v.
\end{equation}
\begin{lemma}\label{lem:deno-estimate}
Let $E=(e_n:n\in \NN)$ be a Leja sequence for $D$ and $d\geq 2$, $d=2^{n_0}+2^{n_1}+\cdots+2^{n_r}$ with $n_0>n_1>\cdots>n_r\geq 0$. Then
\begin{equation}\label{eqn:deno-estimate}
|h'_{st}\big(\langle (e_s-e_t)^{\bot},e_s\rangle\big)|\geq \frac{2^{2r}|\beta_{st}|^{d-4}}{2^{d-2}},\quad 0\leq s<t\leq d-1.
\end{equation}
\end{lemma}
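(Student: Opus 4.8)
The plan is to start from the product formula (\ref{eqn:deri.hst.new}),
$$h'_{st}\big(\langle(e_s-e_t)^{\bot},e_s\rangle\big)=\prod_{m=0,m\ne s,t}^{d-1}\langle(e_s-e_t)^{\bot},(e_s-e_m)\rangle,$$
and to evaluate the $d-2$ factors trigonometrically. Since $e_0=1$, Theorem \ref{structure-Leja} (or the decomposition (\ref{eqn.decom.leja})) shows by induction that the underlying set of every Leja section for $D$ lies on the unit circle, so $e_n=e^{i\theta_n}$ and $e_s-e_m=2i\sin\tfrac{\theta_s-\theta_m}{2}e^{i(\theta_s+\theta_m)/2}$. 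Using $(e_s-e_t)^{\bot}=\alpha_{st}\beta_{st}$, the identity $\langle z,w\rangle=\Re(\overline{z}w)$ for the complex identification of $\RR^2$, and $\beta_{st}\in\RR$, a short computation gives
$$\langle(e_s-e_t)^{\bot},(e_s-e_m)\rangle=\beta_{st}\,\Re\big(\overline{\alpha_{st}}(e_s-e_m)\big)=-2\beta_{st}\sin\tfrac{\theta_s-\theta_m}{2}\sin\tfrac{\theta_m-\theta_t}{2}.$$
Taking absolute values and multiplying over the $d-2$ indices $m\ne s,t$ yields
$$\big|h'_{st}\big(\langle(e_s-e_t)^{\bot},e_s\rangle\big)\big|=(2|\beta_{st}|)^{d-2}\Big(\prod_{m\ne s,t}\big|\sin\tfrac{\theta_s-\theta_m}{2}\big|\Big)\Big(\prod_{m\ne s,t}\big|\sin\tfrac{\theta_m-\theta_t}{2}\big|\Big).$$

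Next I would bound each of the two sine products from below. By (\ref{eqn:simple.equality}) one has $2\big|\sin\tfrac{\theta_u-\theta_v}{2}\big|=|e_u-e_v|$, so
$$\prod_{m\ne s,t}\big|\sin\tfrac{\theta_s-\theta_m}{2}\big|=\frac{1}{2^{d-2}}\prod_{m\ne s,t}|e_s-e_m|=\frac{1}{2^{d-2}|e_s-e_t|}\prod_{m\ne s}|e_s-e_m|\geq\frac{2^r}{2^{d-2}|\beta_{st}|},$$
the last step being Lemma \ref{pro:produc-leja} together with $|e_s-e_t|=|\beta_{st}|$. Exchanging the roles of $s$ and $t$ gives in the same way $\prod_{m\ne s,t}\big|\sin\tfrac{\theta_m-\theta_t}{2}\big|\geq 2^r/(2^{d-2}|\beta_{st}|)$.

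Inserting these two estimates into the identity of the first paragraph makes the powers of $2$ and of $|\beta_{st}|$ collapse to exactly $2^{2r}|\beta_{st}|^{d-4}/2^{d-2}$, which is (\ref{eqn:deno-estimate}); the degenerate case $d=2$ is immediate (then $r=0$, the product defining $h'_{01}$ is empty so $h'_{01}\equiv1$, while $|\beta_{01}|=|1-(-1)|=2$ makes the right-hand side $\tfrac{1}{4}\leq1$). I do not anticipate a real obstacle: the one substantive ingredient, Lemma \ref{pro:produc-leja}, is already available, and the only mild trick is to factor the ``signed-area'' form $\langle(e_s-e_t)^{\bot},(e_s-e_m)\rangle$ into a product of two sines, so that each piece becomes, up to the universal factor $2^{-(d-2)}$, a full chord product to which the Leja lower bound applies. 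The one point to watch is the exponent bookkeeping and correctly reinstating the single chord length $|e_s-e_t|$ that is divided out of each chord product.
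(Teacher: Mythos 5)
Your proposal is correct and follows essentially the same route as the paper: the paper likewise factors each term $\langle(e_s-e_t)^{\bot},(e_s-e_m)\rangle$ as $|\beta_{st}|\cdot\tfrac{|e_s-e_m|\,|e_t-e_m|}{2}$ (via $|\langle\alpha_{st},\tfrac{e_s-e_m}{-\beta_{sm}}\rangle|=|\sin\tfrac{\theta_t-\theta_m}{2}|$), reinstates the missing chord $|e_s-e_t|=|\beta_{st}|$ in each product, and applies Lemma \ref{pro:produc-leja} twice, once at $e_s$ and once at $e_t$. The exponent bookkeeping in your version checks out and matches the paper's bound exactly.
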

\begin{proof}
Thanks to (\ref{eqn:simple.equality}) we may write
\begin{eqnarray}
|h'_{st}\big(\langle (e_s-e_t)^{\bot},e_s\rangle\big)| 
&=&\prod_{m=0,m\ne s,t}^{d-1} |\langle (e_s-e_t)^{\bot},(e_s-e_m)\rangle|\quad\,\,(\textup{ see  (\ref{eqn:deri.hst.new}) })\notag\\ 
&=&|\beta_{st}|^{d-2}  \prod_{m=0,m\ne s,t}^{d-1} \Big|\left\langle \alpha_{st},\frac{e_s-e_m}{-\beta_{sm}}\right\rangle\Big|  \prod_{m=0,m\ne s,t}^{d-1} |e_s-e_m|\notag \\
&=&|\beta_{st}|^{d-3}  \prod_{m=0,m\ne s,t}^{d-1} \Big|\left\langle \alpha_{st},\frac{e_s-e_m}{-\beta_{sm}}\right\rangle\Big|  \prod_{m=0,m\ne s}^{d-1} |e_s-e_m|.\label{eqn:denom-rewrite}
\end{eqnarray}
Since $\frac{e_s-e_m}{-\beta_{sm}}=ie^{\frac{i(\theta_s+\theta_m)}{2}}=e^{\frac{i(\pi+\theta_s+\theta_m)}{2}}$ and $\alpha_{st}=e^{\frac{i(\theta_s+\theta_t)}{2}}$, we have 
\begin{equation}\label{eqn:outer-product}
 \Big|\left\langle \alpha_{st},\frac{e_s-e_m}{-\beta_{sm}}\right\rangle\Big|=\Big|\cos (\frac{\pi+\theta_s+\theta_m}{2}-\frac{\theta_s+\theta_t}{2})\Big|=\Big|\sin\frac{\theta_t-\theta_m}{2}\Big|=\frac{|e_t-e_m|}{2}.
\end{equation}
Combining (\ref{eqn:denom-rewrite}) and (\ref{eqn:outer-product}) we obtain 
\begin{eqnarray*}
 |h'_{st}\big(\langle (e_s-e_t)^{\bot},e_s\rangle\big)| 
&=&|\beta_{st}|^{d-3}  \prod_{m=0,m\ne s,t}^{d-1}\frac{|e_t-e_m|}{2}\prod_{m=0,m\ne s}^{d-1} |e_s-e_m|\\
&=&\frac{|\beta_{st}|^{d-4}}{2^{d-2}}\prod_{m=0,m\ne t}^{d-1}|e_t-e_m|\prod_{m=0,m\ne s}^{d-1} |e_s-e_m|\\
&\geq&\frac{2^{2r}|\beta_{st}|^{d-4}}{2^{d-2}},
\end{eqnarray*}
where we use Lemma \ref{pro:produc-leja} in the third line. 
\end{proof}

\begin{lemma}\label{lem:nume-estimate-kerg}
Under the same assumptions of Lemma \ref{lem:deno-estimate}, we have
\begin{equation}\label{eqn:nume-estimate}
|h_{st}\big(\langle (e_s-e_t)^{\bot},x\rangle\big)|\leq \frac{2^{2r+1}d|\beta_{st}|^{d-2}}{2^{d-2}},\quad 0\leq s<t\leq d-1, \quad\|x\|\leq 1.
\end{equation}
\end{lemma}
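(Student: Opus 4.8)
The plan is to follow the same strategy that worked for the denominator in Lemma \ref{lem:deno-estimate}: factor out $|\beta_{st}|$ from each factor of $h_{st}$, reduce the remaining cosine-type factors to a product of $|e_t - e_m|$ (or rather $|z-e_m|$ type quantities evaluated at a point on $D$), and then invoke the structure of the Leja sequence to bound that product. First I would write $x = e^{i\psi}\cdot\|x\|$ when $\|x\|=1$ (or more carefully keep $x$ as a complex number with $|x|\le 1$) and expand
\begin{equation*}
h_{st}\big(\langle (e_s-e_t)^{\bot},x\rangle\big) = \prod_{m=0,m\ne s}^{d-1}\Big(\langle (e_s-e_t)^{\bot},x\rangle - \langle (e_s-e_t)^{\bot},e_m\rangle\Big) = \prod_{m=0,m\ne s}^{d-1}\langle (e_s-e_t)^{\bot},(x-e_m)\rangle.
\end{equation*}
Using $(e_s-e_t)^{\bot}=\alpha_{st}\beta_{st}$ from (\ref{eqn:simple.equality}) and the fact that $\langle \alpha_{st}\beta_{st}, y\rangle = \beta_{st}\langle \alpha_{st}, y\rangle$ (as $\beta_{st}$ is real), this equals $|\beta_{st}|^{d-1}\prod_{m\ne s}|\langle \alpha_{st}, x-e_m\rangle|$. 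Since $\alpha_{st}$ is a unit vector, $|\langle \alpha_{st}, x-e_m\rangle|\le |x-e_m|\le 2$ for each $m$, which already gives a bound of $|\beta_{st}|^{d-1}\cdot 2^{d-1}$ — but that is in the wrong direction (exponential growth), so the point of the lemma is that cancellation among these factors, exactly as in (\ref{eqn:outer-product}), must be exploited.

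The key observation, mirroring (\ref{eqn:outer-product}), is that $|\langle \alpha_{st}, x-e_m\rangle|$ should be rewritten so that the $e_m$-dependence reproduces a Leja-type product. Concretely, one computes $\langle \alpha_{st}, x - e_m\rangle$ as the real part of $\overline{\alpha_{st}}(x-e_m)$ up to sign; decomposing $x - e_m$ and using $\alpha_{st}=e^{i(\theta_s+\theta_t)/2}$, the factor involving $e_m$ becomes (up to the projection onto one coordinate) comparable to $|x' - e_m|$ where $x'$ is the point $x$ reflected appropriately, or more simply one bounds $\prod_{m\ne s}|\langle\alpha_{st},x-e_m\rangle| \le \prod_{m\ne s}|x - e_m|$ is useless, so instead I would argue as follows: write $\prod_{m=0,m\ne s}^{d-1}\langle(e_s-e_t)^\bot,(x-e_m)\rangle = \langle(e_s-e_t)^\bot,(x-e_t)\rangle^{-1}\cdot(\text{a polynomial in }\langle(e_s-e_t)^\bot,x\rangle\text{ of degree }d)$ — actually the cleanest route is to divide and multiply by the full Lagrange-type product. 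I would relate $h_{st}$ to $|\beta_{st}|^{d-2}$ times $\prod_{m\ne s,t}\tfrac{|e_t - e_m|}{2}$ times a bounded polynomial factor in $x$, exactly as the denominator computation produced $\prod_{m\ne t}|e_t-e_m|\cdot\prod_{m\ne s}|e_s-e_m|$. The extra factor $d$ in (\ref{eqn:nume-estimate}) (versus the $|\beta_{st}|^{d-4}$ and no $d$ in (\ref{eqn:deno-estimate})) and the two extra powers of $|\beta_{st}|$ suggest that the numerator carries one genuine polynomial factor $|\langle(e_s-e_t)^\bot,x\rangle - \langle(e_s-e_t)^\bot,e_s\rangle|$-type term bounded by $2|\beta_{st}|$ on $D$ (since $|\langle y,x\rangle|\le|y|$ for $|x|\le1$), times $h'$-like structure, and the $d$ comes from a sum over the $d-1$ terms appearing when one differentiates or from bounding $\prod_{m\ne s}|z-e_m|$ over $z\in D$ by a quantity growing polynomially — indeed $\max_{z\in D}\prod_{m=0}^{d-1}|z-e_m| = |\prod_{j}(e_d-e_j)|$ by the Leja property, and such maxima are known (via \cite{biacal, calvimanh2}) to be $O(d)$ in the relevant normalization.

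Therefore the concrete steps I would carry out are: (i) expand $h_{st}(\langle(e_s-e_t)^\bot,x\rangle)$ as $\prod_{m\ne s}\langle(e_s-e_t)^\bot,(x-e_m)\rangle$ and pull out $|\beta_{st}|$ from each factor, reducing to $|\beta_{st}|^{d-1}$ times a product of $|\langle\alpha_{st},x-e_m\rangle|$; (ii) convert the $e_m$-dependent part, using $\tfrac{e_s-e_m}{-\beta_{sm}}=e^{i(\pi+\theta_s+\theta_m)/2}$ as in (\ref{eqn:outer-product}), so that writing $x = e_s + (x-e_s)$ and expanding one factor against the rest produces $\prod_{m\ne s,t}\tfrac{|e_t-e_m|}{2}$ (exactly as in Lemma \ref{lem:deno-estimate}) times one leftover bounded factor coming from the $x$-term, which is at most $|\beta_{st}|\cdot\max_{z\in D}(\text{bounded polynomial})$; (iii) combine with Lemma \ref{pro:produc-leja} ($\prod_{m\ne t}|e_t-e_m|\ge 2^r$ is the wrong direction, so instead I use the trivial upper bound $\prod_{m\ne t}|e_t - e_m|\le 2^{d-1}$ together with the Leja-maximality $\max_{z\in D}\prod_{m\ne s}|z-e_m|$, which is $\le 2^{d-1}\cdot(\text{something})$, but more sharply one knows from \cite[Corollary 7]{jpcpvm} the relevant Lebesgue constant is $O(d\log d)$), keeping careful track so the final bound reads $\tfrac{2^{2r+1}d|\beta_{st}|^{d-2}}{2^{d-2}}$. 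The main obstacle will be step (ii)–(iii): getting the bookkeeping of powers of $2$ and powers of $|\beta_{st}|$ exactly right, and in particular extracting the single polynomial factor in $x$ cleanly and bounding the remaining Leja product $\prod_{m=0,m\ne s}^{d-1}\max_{z\in D}|\langle\alpha_{st},z-e_m\rangle|$ by the claimed $2^{2r+1}d/2^{d-2}$ — for this the Leja-sequence lower bounds of Lemma \ref{pro:produc-leja} applied to the complementary product, exactly as in the displayed chain ending Lemma \ref{lem:deno-estimate}, should be the right tool, just used to control a numerator rather than a denominator.
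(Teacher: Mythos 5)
Your step (i) coincides with the paper's opening move: pulling $(e_s-e_t)^{\bot}=\alpha_{st}\beta_{st}$ out of every factor gives $|h_{st}(\langle (e_s-e_t)^{\bot},x\rangle)|=|\beta_{st}|^{d-1}\prod_{m\ne s}|\cos\varphi-\cos(\theta_m-\tfrac{\theta_s+\theta_t}{2})|$ with $\cos\varphi=\langle\alpha_{st},x\rangle$. But from there the proposal has a genuine gap: the whole content of the lemma is a bound on this cosine product that is uniform in $\varphi$ and decays like $2^{-(d-2)}$, and your plan to obtain it by imitating the denominator computation of Lemma \ref{lem:deno-estimate} cannot work. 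The identity (\ref{eqn:outer-product}) that drives that computation, $|\langle\alpha_{st},\tfrac{e_s-e_m}{-\beta_{sm}}\rangle|=\tfrac{|e_t-e_m|}{2}$, is special to the evaluation point being the node $e_s$ (both entries lie on the unit circle, so the inner product collapses to a single sine); for an arbitrary $x\in D$ the factors $|\langle\alpha_{st},x-e_m\rangle|$ do not turn into distances between nodes, and none of the substitutes you mention can rescue the argument. The Lebesgue-constant bound $O(d\log d)$ of \cite{jpcpvm}, the Leja maximality property, or trivial bounds such as $\prod_{m\ne t}|e_t-e_m|\le 2^{d-1}$ are all of the wrong shape: they give at best polynomially or exponentially large quantities, whereas the claimed bound requires each of the $d-1$ cosine factors to contribute, on average, a factor close to $\tfrac12$.

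The mechanism the paper uses, and which is absent from your plan, is the dyadic block structure of the Leja section together with the Chebyshev identity (\ref{eqn:chebyshev.repren}). One splits $\{0,\dots,d-1\}\setminus\{s\}$ into the blocks $\{d_k,\dots,d_{k+1}-1\}$ of (\ref{eqn:set.dj}); since each complete block consists (after rotation) of a full set of $2^{n_{k+1}}$-th roots of unity, Lemma \ref{lem:simplify.factor.triog} collapses its cosine product into a single difference of cosines over $2^{2^{n_{k+1}}-1}$, bounded by $2/2^{2^{n_{k+1}}-1}$; for the one block containing $s$ (with the index $s$ removed) Lemma \ref{lem:one.argument.missing} gives the bound $\tfrac{2\cdot 2^{n_{j+1}}}{2^{2^{n_j+1}-1}|\sin\tfrac{\theta_s-\theta_t}{2}|}$. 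Multiplying these blockwise bounds produces exactly the features your bookkeeping could not account for: the overall $2^{-(d-2)}$ decay (since the block sizes sum to $d$), the factor $d$ (from $2^{n_{j+1}}\le d$), the factor $2^{2r+1}$ (one factor $2$ per block), and the loss of one power of $|\beta_{st}|$ (since $|\beta_{st}|=2|\sin\tfrac{\theta_s-\theta_t}{2}|$ cancels the sine in the incomplete-block bound), yielding $\tfrac{2^{2r+1}d|\beta_{st}|^{d-2}}{2^{d-2}}$. Without invoking Lemmas \ref{lem:simplify.factor.triog} and \ref{lem:one.argument.missing} (or an equivalent use of the Chebyshev factorization over the dyadic blocks), the proof cannot be completed along the lines you sketch.
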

\begin{proof} 
 Let us set $\langle\alpha_{st},x\rangle=\cos\varphi$. Since $(e_s-e_t)^{\bot}=\alpha_{st}\beta_{st}$ and $\langle\alpha_{st},e_m\rangle=\cos(\theta_m-\frac{\theta_s+\theta_t}{2})$, in view of  (\ref{eqn:hst.new}), we get 
\begin{eqnarray}
|h_{st}\big(\langle (e_s-e_t)^{\bot},x\rangle\big)|
&=&|\beta_{st}|^{d-1}\prod_{m=0,m\ne s}^{d-1} |\langle \alpha_{st},(x-e_m)\rangle|\notag\\
&=&|\beta_{st}|^{d-1}\prod_{m=0,m\ne s}^{d-1}|\cos\varphi-\cos(\theta_m-\frac{\theta_s+\theta_t}{2})|.\label{eqn:numer1}
\end{eqnarray}
There exists a unique $-1\leq j\leq r-1$ such that $d_j\leq s\leq d_{j+1}-1$, where the $d_i$'s are defined in (\ref{eqn:set.dj}). We can write the trigonometric expression in the right hand side of (\ref{eqn:numer1}) as follows
\begin{equation}\label{eqn:group.factors}
\prod_{k=-1,k\ne j}^{r-1}\prod_{m=d_k}^{d_{k+1}-1}|\cos\varphi-\cos(\theta_m-\frac{\theta_s+\theta_t}{2})| \cdot \prod_{m=d_j,m\ne s}^{d_{j+1}-1}|\cos\varphi-\cos(\theta_m-\frac{\theta_s+\theta_t}{2})|.
\end{equation}
Thanks to Lemmas \ref{lem:simplify.factor.triog} and \ref{lem:one.argument.missing}, the first factor and the second factor in (\ref{eqn:group.factors}) are dominated respectively by 
$$\prod_{k=-1,k\ne j}^{r-1}\frac{2}{2^{2^{n_{k+1}}-1}}\quad\text{and}\quad\frac{2\cdot 2^{n_{j+1}}}{2^{2^{n_j+1}-1}|\sin\frac{\theta_s-\theta_t}{2}|}.$$
Combining these estimates with (\ref{eqn:numer1}) we obtain
$$|h_{st}\big(\langle (e_s-e_t)^{\bot},x\rangle\big)|\leq |\beta_{st}|^{d-1}\cdot \prod_{k=-1}^{r-1}\frac{2}{2^{2^{n_{k+1}}-1}}\cdot\frac{2^{n_{j+1}}}{|\sin\frac{\theta_s-\theta_t}{2}|}\leq  \frac{2^{2r+1}d|\beta_{st}|^{d-2}}{2^{d-2}},$$
here we use the facts that $|\beta_{st}|=2|\sin\frac{\theta_s-\theta_t}{2}|$ and $2^{n_{j+1}}\leq d$. 
\end{proof}
\begin{lemma}\label{lem:nume-estimate-hakop}
Under the same assumptions of Lemma \ref{lem:deno-estimate}, we have
\begin{equation}\label{eqn:nume-estimate}
|h'_{st}\big(\langle (e_s-e_t)^{\bot},x\rangle\big)|\leq  \frac{2^{2r+1}d^3|\beta_{st}|^{d-3}}{2^{d-2}},\quad  0\leq s<t\leq d-1, \,\,\|x\|\leq 1.
\end{equation}
\end{lemma}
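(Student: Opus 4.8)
The plan is to avoid a direct block-by-block estimate of $h'_{st}$ and instead to deduce the bound for the derivative from the bound for $h_{st}$ already established in Lemma~\ref{lem:nume-estimate-kerg}, by means of a Markov inequality. Keeping the notation of the proof of Lemma~\ref{lem:nume-estimate-kerg}, set $\langle\alpha_{st},x\rangle=\cos\varphi$ and introduce the monic real polynomial
$$g(w):=\prod_{m=0,\,m\ne s}^{d-1}\Big(w-\cos\big(\theta_m-\tfrac{\theta_s+\theta_t}{2}\big)\Big),\qquad \deg g=d-1.$$
Since $(e_s-e_t)^{\bot}=\alpha_{st}\beta_{st}$ and $\langle\alpha_{st},e_m\rangle=\cos(\theta_m-\tfrac{\theta_s+\theta_t}{2})$, formula (\ref{eqn:hst.new}) gives $h_{st}(\langle(e_s-e_t)^{\bot},x\rangle)=\beta_{st}^{\,d-1}g(\cos\varphi)$; differentiating the polynomial $h_{st}$ and evaluating at the same argument yields $h'_{st}(\langle(e_s-e_t)^{\bot},x\rangle)=\beta_{st}^{\,d-2}g'(\cos\varphi)$.

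Next I would convert the Kergin numerator estimate into a sup-norm bound for $g$ on all of $[-1,1]$. Because $\alpha_{st}$ is a unit vector, $\cos\varphi=\langle\alpha_{st},x\rangle$ runs over the whole interval $[-1,1]$ as $x$ runs over $D$, so the first identity above together with Lemma~\ref{lem:nume-estimate-kerg} gives $\|g\|_{[-1,1]}\le 2^{2r+1}d\,(2^{d-2}|\beta_{st}|)^{-1}$. Applying the Markov inequality (\ref{eqn:markov.D}) to $g$, regarded as a polynomial of degree $d-1$ depending only on the variable $x^1$ on $D$, gives $\|g'\|_{[-1,1]}\le(d-1)^2\|g\|_{[-1,1]}$; since $\cos\varphi\in[-1,1]$, this bounds $|g'(\cos\varphi)|$ by $(d-1)^2\cdot 2^{2r+1}d\,(2^{d-2}|\beta_{st}|)^{-1}$. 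Multiplying through by $|\beta_{st}|^{d-2}$ and using the second identity above gives the asserted inequality, because $|\beta_{st}|^{d-2}|\beta_{st}|^{-1}=|\beta_{st}|^{d-3}$ and $(d-1)^2 d\le d^3$.

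The one delicate point — and the reason for routing the proof through Markov's inequality rather than imitating the argument of Lemma~\ref{lem:nume-estimate-kerg} — is that $h'_{st}(w)=\sum_{m\ne s}h_{st}(w)/(w-\langle(e_s-e_t)^{\bot},e_m\rangle)$ is a sum whose individual summands are unbounded near the interpolation nodes even though the total is a polynomial, so a term-by-term block estimate is unavailable. Markov's inequality controls $g'$ on $[-1,1]$ in a single step, at the cost of a factor $(d-1)^2$; this is precisely the two extra powers of $d$ that separate the $O(d)$ bound of Theorem~\ref{thm.second.lebesgue.type} from the $O(d^3)$ bound of Theorem~\ref{thm.third.lebesgue.type}. (For $d=2$ the statement is trivial since then $h'_{st}\equiv1$, so one may assume $d\ge3$, for which $g$ is non-constant.)
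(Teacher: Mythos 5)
Your proposal is correct and follows essentially the same route as the paper: both deduce the bound on $h'_{st}$ from the Kergin numerator estimate of Lemma \ref{lem:nume-estimate-kerg} via Markov's inequality for degree $d-1$, picking up the factor $(d-1)^2/|\beta_{st}|$. The only cosmetic difference is that you rescale to the polynomial $g$ on $[-1,1]$, whereas the paper applies the (scaled) classical Markov inequality directly to $h_{st}$ on the interval $\ell(D)=[-|\beta_{st}|,|\beta_{st}|]$.
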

\begin{proof}
 Let $\ell: \RR^2\to \RR$ be the linear form defined by  $\ell(x):=\langle (e_s-e_t)^{\bot},x\rangle$. Since $|(e_s-e_t)^{\bot}|=|\beta_{st}|$, we have  $\ell(D)=[-|\beta_{st}|,|\beta_{st}|]$. It follows that 
$$\sup_{x\in D}|h_{st}\big(\langle (e_s-e_t)^{\bot},x\rangle\big)|=\|h_{st}\|_{ [-|\beta_{st}|,|\beta_{st}|]}\quad\text{and}\quad\sup_{x\in D}|h'_{st}\big(\langle (e_s-e_t)^{\bot},x\rangle\big)|=\|h'_{st}\|_{ [-|\beta_{st}|,|\beta_{st}|]}.$$ 
Since $\deg h_{st}=d-1$, classical Markov's inequality gives   
$$\|h'_{st}\|_{[-|\beta_{st}|,|\beta_{st}|]}\leq (1/|\beta_{st}|)(d-1)^2\|h_{st}\|_{ [-|\beta_{st}|,|\beta_{st}|]}.$$ 
Hence Lemma \ref{lem:nume-estimate-kerg} yields
$$\sup_{x\in D}|h'_{st}\big(\langle (e_s-e_t)^{\bot},x\rangle\big)| \leq \frac{(d-1)^2}{|\beta_{st}|} \sup_{x\in D}|h_{st}\big(\langle (e_s-e_t)^{\bot},x\rangle\big)|  \leq \frac{2^{2r+1}d^3|\beta_{st}|^{d-3}}{2^{d-2}},$$
\end{proof}
\begin{proof}[Proof of Theorems \ref{thm.second.lebesgue.type} and \ref{thm.third.lebesgue.type}] 
Lemmas \ref{lem:deno-estimate} and \ref{lem:nume-estimate-kerg} give an upper bound $\frac{2^{2r+1}d|\beta_{st}|^{d-2}}{2^{d-2}}$ for the numerator of $P_{st}(x)$ and a lower bound $\frac{2^{2r}|\beta_{st}|^{d-2}}{2^{d-2}}$ for the denominator of $P_{st}(x)$. Thus $|P_{st}(x)|\leq 2d$ for all $x\in D$ and $0\leq s<t\leq d-1$. At the same time, Lemmas \ref{lem:deno-estimate} and \ref{lem:nume-estimate-hakop} follow that $|Q_{st}(x)|\leq 2d^3|\beta_{st}|\leq 4d^3$ for all $x\in D$ and $0\leq s<t\leq d-1$.
 \end{proof}
\subsection*{Acknowledgement} The author would like to thank his PhD adviser Jean-Paul Calvi for suggesting the problem and for his help in preparing this note. The work is supported by a PhD fellowship from the Vietnamese government. 
\bibliographystyle{plain}
\bibliography{bib_convergence_ker_hak}
\end{document}